\newtheorem{conjecture}{Conjecture}
\newtheorem{corollary}{Corollary}
\newtheorem{lemma}{Lemma}
\newtheorem{proposition}{Proposition}
\newtheorem{theorem}{Theorem}
\theoremstyle{definition}
\newtheorem{definition}{Definition}
\theoremstyle{remark}
\newtheorem{remark}{Remark}
\DeclareMathOperator{\ad}{ad}
\DeclareMathOperator{\Aut}{Aut}
\DeclareMathOperator{\Der}{Der}
\DeclareMathOperator{\End}{End}
\DeclareMathOperator{\DC}{DC}
\DeclareMathOperator{\HDC}{HDC}
\DeclareMathOperator{\Hom}{Hom}
\DeclareMathOperator{\JC}{JC}
\DeclareMathOperator{\InnDer}{InnDer}
\DeclareMathOperator{\id}{id}
\DeclareMathOperator{\N}{\mathbb{N}}
\DeclareMathOperator{\Q}{\mathbb{Q}}
\DeclareMathOperator{\Z}{\mathbb{Z}}
\begin{document}

\title{The higher-order hom-associative Weyl algebras}
\author{Per B\"ack}
\address{Department of Business and Mathematics, M\"alar\-dalen  University,  SE-721  23  V\"aster\r{a}s, Sweden}
\email{per.back@mdu.se}

\subjclass[2020]{17B61, 17D30}
\keywords{Dixmier Conjecture, Jacobian Conjecture, hom-associative Ore extensions, formal hom-associative deformations, formal hom-Lie deformations}

\begin{abstract}
We show that the higher-order Weyl algebras over a field of characteristic zero, which are formally rigid as associative algebras, can be formally deformed in a nontrivial way as hom-associative algebras. We also show that these hom-associative Weyl algebras arise naturally as hom-associative iterated differential polynomial rings, that they contain no zero divisors, are power-associative only when associative, and that they are simple. We then determine their commuters, nuclei, centers, and derivations. Last, we classify all hom-associative Weyl algebras up to isomorphism and conjecture that all nonzero homomorphisms between any two isomorphic hom-associative Weyl algebras are isomorphisms. The latter conjecture turns out to be stably equivalent to the Dixmier Conjecture, and hence also to the Jacobian Conjecture. 
\end{abstract}
\maketitle

\section{Introduction}
\emph{Hom-associative algebras} were introduced by Makhlouf and Silvestrov \cite{MS08} as nonassociative algebras where the associativity condition is twisted by a linear map. In particular, any associative algebra may be seen as a hom-associative algebra with twisting map the identity map. The motivation for introducing these algebras was to construct, in a most natural way, \emph{hom-Lie algebras}. The latter algebras were in turn introduced by Hartwig, Larsson, and Silvestrov \cite{HLS06} as generalizations of Lie algebras, the Jacobi identity now twisted by a linear map. In particular, any Lie algebra may be seen as a hom-Lie algebra where the twisting map is the identity map. Now, any hom-associative algebra gives rise to a hom-Lie algebra when the commutator is used as a hom-Lie bracket, that is, as a new nonassociative multiplication. When the twisting map is the identity map, the above construction is the well-known construction of a Lie algebra from an associative algebra, when the commutator is used as a Lie bracket.

The \emph{higher-order Weyl algebras} are associative algebras that can be exhibited as \emph{iterated differential polynomial rings}, the latter a special type of \emph{Ore extension}, or \emph{noncommutative polynomial ring}, as they were first called by Ore \cite{Ore33} who introduced them. Since their introduction, Ore extensions have been well studied (see e.g. \cite{GW04, Lam01, MR01, Row88} for introductions to the subject), and recently several authors have started studying various nonassociative versions (see e.g. \cite{AB26, AB25, BLOR24, BR24b, NOR18, NOR19}) and hom-associative generalizations (see e.g. \cite{Bac22, BR23, BRS18}) of them. Moreover, in \cite{BR20} the authors studied hom-associative generalizations of the first Weyl algebras in characteristic zero, and in \cite{BR22} in prime characteristic. In \cite{BR24b}, the authors introduced hom-associative generalizations of the higher-order Weyl algebras in characteristic zero and showed that families of these algebras have all their one-sided ideals as principal. 

The higher-order Weyl algebras are formally rigid as associative algebras in characteristic zero, this in the classical sense of Gerstenhaber who introduced formal deformation theory for associative algebras and rings in the famous article \cite{Ger64}. In the present article, we show that the \emph{higher-order hom-associative Weyl algebras} are formal hom-associative deformations of the higher-order Weyl algebras in characteristic zero, a result which was proven to hold for the first (hom-associative) Weyl algebra in \cite{BR20}. Thus, the higher-order Weyl algebras can indeed be deformed in characteristic zero when seen as hom-associative algebras with twisting map the identity map. In this article, we also show that the higher-order hom-associative Weyl algebras arise, similarly to their associative counterparts, as hom-associative analogues of iterated differential polynomial rings. We then prove that they contain no zero divisors, are power-associative only when associative, and that they are simple. The latter generalizes results in \cite{BR20, BR24}. We then determine their commuters, nuclei, centers, and derivations, generalizing results about the first hom-associative Weyl algebra in characteristic zero to the higher-order analogues. Last, we classify all higher-order hom-associative Weyl algebras up to isomorphism and conjecture that all nonzero homomorphisms between any two isomorphic higher-order hom-associative Weyl algebras are isomorphisms. This conjecture is known to hold for the first purely hom-associative Weyl algebra \cite{BR20}, and in a recent preprint, Zheglov~\cite{Zhe24} claims that it also holds for the first Weyl algebra. The above general conjecture turns out to be stably equivalent to the \emph{Dixmier Conjecture}, which states that any algebra endomorphism on a higher-order Weyl algebra is, in fact, an algebra automorphism. Tsuchimoto \cite{Tsu05} and Kanel-Belov and Kontsevich \cite{KBK07} have independently proven that the latter conjecture is stably equivalent to the famous \emph{Jacobian Conjecture}.

The article is organized as follows: 

In \autoref{sec:prel}, we provide preliminaries on nonassociative algebras (\autoref{subsec:non-assoc}), hom-associative algebras (\autoref{subsec:hom}), and iterated differential polynomial rings and the higher-order Weyl algebras (\autoref{subsec:iterated}).

In \autoref{sec:hom-diff}, we describe how to construct, in a natural way, iterated hom-associative differential polynomial rings from associative analogues (\autoref{prop:yau-twisted-Ore-extension}). 

In \autoref{sec:hom-Weyl}, we define hom-associative analogues of the higher-order Weyl algebras (\autoref{def:hom-weyl}). We show that these higher-order hom-associative Weyl algebras contain no zero divisors, are power-associative only if they are associative (\autoref{thm:hom-Weyl-properties}), and that they are simple (\autoref{thm:simple}). We then determine their commuters, nuclei, and centers (\autoref{thm:hom-Weyl-properties2}), as well as their derivations (\autoref{thm:derivations}). Last, we classify them up to isomorphism (\autoref{thm:hom-Weyl-isomorphism}) and conjecture that all nonzero homomorphisms between any two isomorphic hom-associative Weyl algebras are isomorphisms (\autoref{conj:hom-Dixmier}). We then show that the stable version of the above conjecture is equivalent to the Dixmier Conjecture (\autoref{prop:conj}).

In \autoref{sec:multi-param}, we recall what multi-parameter formal deformations of hom-as\-so\-cia\-ti\-ve algebras (\autoref{def:hom-assoc-deform}) and hom-Lie algebras (\autoref{def:hom-Lie-deform}) are. We show that the higher-order hom-associative Weyl algebras are multi-parameter formal deformations of the higher-order Weyl algebras (\autoref{thm:hom-Weyl-deform}), and that they induce multi-parameter formal deformations of the corresponding Lie algebras into hom-Lie algebras, when the commutator is used as a hom-Lie bracket (\autoref{cor:hom-Weyl-Lie-deform}).

\section{Preliminaries}\label{sec:prel}
Throughout this article, we denote by $\N$ the set of nonnegative integers, and by $\N_{>0}$ the set of positive integers.

\subsection{Nonassociative algebras}\label{subsec:non-assoc}
Let $R$ be an associative, commutative, and unital ring. By a \emph{nonassociative $R$-algebra}, we mean an algebra over $R$ which is not necessarily associative, and not necessarily unital. In particular, a \emph{nonassociative ring} is a nonassociative $\mathbb{Z}$-algebra. If $A$ is a nonassociative and unital $R$-algebra, then any $R$-algebra endomorphism on $A$ is assumed to respect the identity element. 

If $A$ is a nonassociative $R$-algebra, recall that a nonzero element $a\in A$ is called a \emph{left zero divisor} of $A$ if there is a nonzero $b\in A$ such that $ab=0$. Similarly, $a$ is called a \emph{right zero divisor} of $A$ if there is a nonzero $b\in A$ such that $ba=0$. An element that is a left or a right zero divisor of $A$ is simply called a \emph{zero divisor} of $A$. We denote by $D_l(A)$ the set of left zero divisors of $A$, and by $D_r(A)$ the set of right zero divisors of $A$.

An \emph{ideal} $I$ of $A$ is an additive subgroup of $A$ invariant under left and right multiplication, meaning $RI, IR, AI, IA\subseteq I$. If $\{0\}$, called the \emph{zero ideal}, and $A$ are the only ideals of $A$, then $A$ is \emph{simple}.

The \emph{commutator} of a nonassociative $R$-algebra $A$ is the $R$-bilinear map $[\cdot,\cdot]\colon A\times A\to A$ defined by $[a,b]\colonequals ab-ba$ for any $a,b\in A$. The \emph{commuter} of $A$, written $C(A)$, is defined as $\{a\in A\mid [a,b]=0\text{ for any } b\in A\}$. The \emph{associator} of $A$ is the $R$-trilinear map $(\cdot,\cdot,\cdot)\colon A\times A\times A\to A$ defined by $(a,b,c)\colonequals (ab)c-a(bc)$ for any $a,b,c\in A$. $A$ is called \emph{power associative} if $(a,a,a)=0$, \emph{left alternative} if $(a,a,b)=0$, \emph{right alternative} if $(a,b,b)=0$, flexible if $(a,b,a)=0$ for any $a,b\in A$. Moreover, $A$ is associative precisely when $(a,b,c)=0$ for any $a,b,c\in A$.

The \emph{left, middle,} and \emph{right nucleus} of $A$ are defined, respectively, as follows: $N_l(A)\colonequals\{a\in A\mid (a,b,c)=0\text { for any } b,c\in A\}$, $N_m(A)\colonequals\{b\in A\mid (a,b,c)=0\text { for any } a,c\in A\}$, and $N_r(A)\colonequals\{c\in A\mid (a,b,c)=0\text { for any } a,b\in A\}$. It turns out that $N_l(A), N_m(A),$ and $N_r(A)$ are all associative subalgebras of $A$. The \emph{nucleus} of $A$, denoted by $N(A)$, is defined as $N_l(A)\cap N_m(A)\cap N_r(A)$. The \emph{center} of $A$, written $Z(A)$ is defined as the associative and commutative subalgebra $C(A)\cap N(A)$ of $A$. A \emph{derivation} on $A$ is an $R$-linear map $\delta\colon A\to A$ satisfying, for all $a,b\in A$, the identity $\delta(ab)=\delta(a)b+a\delta(b)$. The set of all derivations on $A$ is denoted by $\Der_R(A)$. Now, if $A$ is associative and $a$ is an arbitrary element of $A$, then $\ad_a\colonequals[a,\cdot]\colon A\to A$ is a derivation, called an \emph{inner derivation} (If $A$ is not associative, such a map need not be a derivation, however). Whenever $A$ is associative, we denote by $\InnDer_R(A)$ the set of all inner derivations on $A$.

Suppose that $A$ is a nonassociative $\Q$-algebra. A map $\varphi\colon A\to A$ is said to be \emph{locally nilpotent} if for each $a\in A$, there exists an $n\in\N_{>0}$ such that $\varphi^n(a)=0$. If $\varphi$ is locally nilpotent, then we define $e^\varphi$ as the formal power series $\sum_{i\in\N}\frac{\varphi^i}{i!}$ where $\varphi^0\colonequals\id_A$. The next proposition is a standard result on such maps. We provide a proof for the convenience of the reader.

\begin{proposition}\label{prop:nilpotent-maps}
If $\varphi_1,\ldots,\varphi_n$ are pairwise commuting locally nilpotent maps on a nonassociative $\Q$-algebra, then $e^{\varphi_1}\cdots e^{\varphi_n}=e^{\varphi_1+\cdots+\varphi_n}$.
\end{proposition}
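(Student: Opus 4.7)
The plan is first to guarantee that $\varphi_1+\cdots+\varphi_n$ is itself locally nilpotent so that $e^{\varphi_1+\cdots+\varphi_n}$ is defined, and then to compare both operators at an arbitrary point via a finite-sum manipulation. Fix $a\in A$ and choose $N_1,\ldots,N_n\in\N_{>0}$ with $\varphi_i^{N_i}(a)=0$ for each $i$. I would consider the $\Q$-subspace $V$ spanned by the elements $\varphi_1^{j_1}\cdots\varphi_n^{j_n}(a)$ with $0\le j_i<N_i$. Pairwise commutativity of the $\varphi_i$ lets me freely reorder compositions, so applying any $\varphi_k$ to such a generator either yields another generator (when $j_k+1<N_k$) or, after commuting $\varphi_k^{N_k}$ past the remaining factors and applying it to $a$, yields zero. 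Hence $V$ is finite-dimensional and stable under each $\varphi_i$.

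On the stable subspace $V$, the commuting restrictions $\varphi_1|_V,\ldots,\varphi_n|_V$ are nilpotent, so the multinomial theorem (whose application is legitimate precisely because of pairwise commutativity) gives, for every $M\ge N_1+\cdots+N_n$,
\begin{equation*}
(\varphi_1+\cdots+\varphi_n)^M|_V=\sum_{j_1+\cdots+j_n=M}\frac{M!}{j_1!\cdots j_n!}\,\varphi_1^{j_1}\cdots\varphi_n^{j_n}|_V=0,
\end{equation*}
since any tuple summing to $M$ must contain at least one $j_i\ge N_i$ by pigeonhole. In particular $(\varphi_1+\cdots+\varphi_n)^M(a)=0$, and since $a$ was arbitrary this proves that $\varphi_1+\cdots+\varphi_n$ is locally nilpotent, so the right-hand side of the identity makes sense.

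Finally, both $e^{\varphi_1}\cdots e^{\varphi_n}(a)$ and $e^{\varphi_1+\cdots+\varphi_n}(a)$ reduce to finite $\Q$-linear combinations of elements of $V$. Expanding the product of exponentials and using commutativity gives
\begin{equation*}
e^{\varphi_1}\cdots e^{\varphi_n}(a)=\sum_{j_1,\ldots,j_n\ge 0}\frac{1}{j_1!\cdots j_n!}\,\varphi_1^{j_1}\cdots\varphi_n^{j_n}(a),
\end{equation*}
while expanding $e^{\varphi_1+\cdots+\varphi_n}(a)$ via the same multinomial theorem and reindexing the outer sum by $k=j_1+\cdots+j_n$ produces exactly the same expression. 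Since $a$ was arbitrary, the two operators coincide on all of $A$. The substantive step in the argument is the construction of the finite-dimensional invariant subspace $V$, which simultaneously handles local nilpotence of the sum and legitimises the finite rearrangement of terms; the rest is standard formal power series bookkeeping, and an induction on $n$ from the case $n=2$ would work equally well if one preferred that route.
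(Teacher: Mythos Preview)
Your proof is correct. The key idea of passing to the finite-dimensional $\varphi_i$-stable subspace $V$ generated by the $\varphi_1^{j_1}\cdots\varphi_n^{j_n}(a)$ with $j_i<N_i$ is sound: commutativity lets you push $\varphi_k^{N_k}$ all the way to $a$, so $V$ really is invariant and each $\varphi_i|_V$ is nilpotent of index at most $N_i$. After that, the multinomial expansion and the comparison of the two finite sums go through exactly as you say.

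The paper takes a more computational route: it first argues (via the binomial theorem) that sums of commuting locally nilpotent maps are locally nilpotent, then proves the identity by induction on $n$, establishing the base case $n=2$ by a direct reindexing of the double series and reducing $n+1$ to $n$ by grouping $\varphi_1+\cdots+\varphi_n$ as a single map. Your approach handles all $n$ at once and makes the finiteness of every sum completely transparent by working inside $V$; the price is the small extra step of constructing $V$ and checking its invariance. The paper's inductive argument avoids that construction but requires slightly more care with the index manipulations. Both are standard, and you correctly note at the end that the inductive route is an equally valid alternative.
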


\begin{proof}
By e.g. using the binomial theorem, one can see that the sum of two pairwise commuting locally nilpotent maps is again a locally nilpotent map. By induction, any finite sum of pairwise commuting, locally nilpotent maps is also a locally nilpotent map, so the statement in \autoref{prop:nilpotent-maps} makes sense. We now prove this statement, which we denote by P$(n)$, by induction on $n$. If $\varphi$ is a locally nilpotent map, we let $\binom{i}{j}\varphi^{i-j}\colonequals0$ whenever $i<j$. Using this, we see that P$(2)$ holds, since for any two commuting locally nilpotent maps $\varphi_1$ and $\varphi_2$,
\begin{align*}
e^{\varphi_1+\varphi_2}&\colonequals\sum_{i\in\N}\frac{(\varphi_1+\varphi_2)^i}{i!}=\sum_{i\in\N}\sum_{j=0}^i\binom{i}{j}\frac{\varphi_1^j\varphi_2^{i-j}}{i!}=\sum_{i\in\N}\sum_{j\in\N}\binom{i}{j}\frac{\varphi_1^j\varphi_2^{i-j}}{i!}\\
&\phantom{:}=\sum_{i\in\N}\sum_{j\in\N}\left.\left.\frac{\varphi_1^j\varphi_2^{i-j}}{j!(i-j)!}\right[\ell\colonequals i-j\right]=\sum_{j\in\N}\frac{\varphi_1^j}{j!}\sum_{\ell\in\N}\frac{\varphi_2^\ell}{\ell !}\equalscolon e^{\varphi_1}e^{\varphi_2}.
\end{align*}
Let $\varphi_1,\ldots,\varphi_{n+1}$ be pairwise commuting, locally nilpotent maps, and assume that P$(n)$ holds. Then P($n+1$) holds as well, since
\begin{align*}
e^{\varphi_1+\cdots+\varphi_{n+1}}&\stackrel{\phantom{\text{P}(n)}}{=}\sum_{i\in\N}\frac{(\varphi_1+\cdots + \varphi_{n+1})^i}{i!}=\sum_{i\in\N}\sum_{j=0}^i\binom{i}{j}\frac{(\varphi_1+\cdots+\varphi_n)^j\varphi_{n+1}^{i-j}}{i!}\\
&\stackrel{\phantom{\text{P}(n)}}{=}\sum_{i\in\N}\sum_{j=0}^i\left.\left.\frac{(\varphi_1+\cdots+\varphi_n)^j\varphi_{n+1}^{i-j}}{j!(i-j)!}\right[\ell\colonequals i-j\right]\\
&\stackrel{\phantom{\text{P}(n)}}{=}\sum_{j\in\N}\frac{(\varphi_1+\cdots+\varphi_n)^j}{j!}\sum_{\ell\in\N}\frac{\varphi_{n+1}^\ell}{\ell !}\equalscolon e^{\varphi_1+\cdots +\varphi_n}e^{\varphi_{n+1}}\\
&\stackrel{\text{P}(n)}{=}e^{\varphi_1}\cdots e^{\varphi_n}e^{\varphi_{n+1}}.\qedhere
\end{align*}
\end{proof}

\subsection{Hom-associative algebras and hom-Lie algebras}\label{subsec:hom}
We recall the definition of a \emph{hom-associative algebra}, a notion first introduced by Makhlouf and Silvestrov \cite{MS08}.

\begin{definition}[Hom-associative algebra]
A \emph{hom-associative algebra} over an associative, commutative, and unital ring $R$ is a nonassociative $R$-algebra $A$ equipped with an $R$-linear map $\alpha\colon A\to A$, called \emph{twisting map}, satisfying  for all $a,b,c\in A$ the following identity:
\[\alpha(a)(bc)=(ab)\alpha(c)\quad \text{(hom-associativity)}.\]
\end{definition}

\begin{remark}
From the hom-associative identity we see that we may view any associative $R$-algebra $A$ as a hom-associative $R$-algebra with twisting map the identity map $\id_A\colon A\to A$. Similarly, any nonassociative $R$-algebra $A$ may be viewed as a hom-associative $R$-algebra with twisting map the zero map $0_A\colon A\to A$.
\end{remark}

\begin{definition}[Hom-associative ring]
A \emph{hom-associative ring} is a hom-associative algebra over $\Z$.
\end{definition}

If $A$ and $B$ are hom-associative $R$-algebras with twisting map $\alpha_A$ and $\alpha_B$, respectively, then a \emph{hom-associative $R$-algebra homomorphism} $\varphi\colon A\to B$ is a homomorphism of nonassociative $R$-algebras, i.e. an $R$-linear and multiplicative map, satisfying $\varphi\circ\alpha_A=\alpha_B\circ\varphi$. We denote by $\Hom_R(A,B)$
the set of all hom-associative $R$-algebra homomorphisms from $A$ to $B$, and let $\End_R(A)\colonequals \Hom_R(A,A)$ denote the corresponding set of \emph{endomorphisms}. We note that whenever $\alpha_A=\id_A$ and $\alpha_B=\id_B$, or $\alpha_A=0_A$ and $\alpha_B=0_B$, the identity $\varphi\circ\alpha_A=\alpha_B\circ\varphi$ becomes trivial, and we then have the usual definitions (notations) for (the sets of) homomorphisms and endomorphisms of associative and nonassociative $R$-algebras, respectively.

There is a now classical construction due to Yau \cite[Corollary 2.5]{Yau09} which takes as input an associative $R$-algebra with an $R$-algebra endomorphism, and gives as output a hom-associative $R$-algebra with twisting map the very same $R$-algebra endomorphism:

\begin{proposition}[Yau \cite{Yau09}]\label{prop:yau}
Let $A$ be an associative $R$-algebra with $\alpha\in\End_R(A)$. Define a new multiplication $*$ on $A$ by $a*b\colonequals \alpha(ab)$. Then the resulting nonassociative $R$-algebra, denoted by $A^\alpha$, is hom-associative with twisting map $\alpha$.
\end{proposition}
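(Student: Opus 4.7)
The plan is to directly verify the two requirements: that $(A, *)$ is a nonassociative $R$-algebra, and that it satisfies the hom-associative identity with respect to the given $\alpha$. The first is an easy bookkeeping step; the real content is in the second, which will reduce to a one-line computation using that $\alpha$ is a unital $R$-algebra endomorphism and that the original multiplication on $A$ is associative.

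First I would check that $*$ defines an $R$-bilinear operation on $A$. This is immediate: for $a,b,c\in A$ and $r\in R$, the equalities $(ra+b)*c = \alpha((ra+b)c) = \alpha(rac + bc) = r\alpha(ac) + \alpha(bc) = r(a*c)+(b*c)$ and the symmetric identity in the second argument follow from $R$-linearity of $\alpha$ and the $R$-bilinearity of the associative product on $A$. Hence $A^\alpha\colonequals(A,*)$ is a nonassociative $R$-algebra.

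Next I would verify hom-associativity. For any $a,b,c\in A$, expanding both sides of the required identity using the definition of $*$ and the fact that $\alpha$ is multiplicative, I would compute
\begin{align*}
\alpha(a)*(b*c) &= \alpha(a)*\alpha(bc) = \alpha\bigl(\alpha(a)\cdot\alpha(bc)\bigr) = \alpha\bigl(\alpha(a)\alpha(b)\alpha(c)\bigr),\\
(a*b)*\alpha(c) &= \alpha(ab)*\alpha(c) = \alpha\bigl(\alpha(ab)\cdot\alpha(c)\bigr) = \alpha\bigl(\alpha(a)\alpha(b)\alpha(c)\bigr),
\end{align*}
where in each line I have used associativity of the original multiplication on $A$ to remove the parentheses around $\alpha(a)\alpha(b)\alpha(c)$. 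The two expressions coincide, which is exactly the hom-associative identity $\alpha(a)*(b*c)=(a*b)*\alpha(c)$.

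There is no real obstacle: the only place where anything nontrivial happens is the interplay between $\alpha$ being an algebra homomorphism (so that it pushes through products) and the original associativity of $A$ (so that the triple product $\alpha(a)\alpha(b)\alpha(c)$ is unambiguous). One small thing I would mention for completeness is that, by the convention adopted earlier in the paper that $R$-algebra endomorphisms of unital algebras preserve the unit, the construction behaves well with respect to any unit of $A$, although this is not needed for the statement itself.
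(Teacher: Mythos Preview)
Your proof is correct; the verification of $R$-bilinearity and the hom-associative identity is exactly the standard one-line argument. The paper itself does not supply a proof of this proposition, instead attributing it to Yau \cite{Yau09}, so there is nothing to compare against beyond noting that your direct computation is the expected one.
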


Fr{\'e}gier and Gohr \cite{FG09} realized that unitality is a much too strong concept for hom-associative algebras, and so they introduced a weaker notion of unitality called \emph{weak unitality}:

\begin{definition}[Weak unitality]
A hom-associative $R$-algebra $A$ is called \emph{weakly unital} if there is an $e\in A$, called a \emph{weak identity element}, such that $ae=ea=\alpha(a)$ hold for any $a\in A$.
\end{definition}

The next result, due to Fr{\'e}gier and Gohr \cite[Example 2.2]{FG09}, shows that any associative and unital $R$-algebra with an $R$-algebra endomorphism gives rise to a weakly unital hom-associative algebra over $R$.

\begin{corollary}[Fr{\'e}gier and Gohr \cite{FG09}]
If $A$ is an associative and unital $R$-algebra with $\alpha\in\End_R(A)$, then $A^\alpha$ is weakly unital with weak identity element $1_A$.
\end{corollary}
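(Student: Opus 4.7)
The plan is to directly verify the weak unit condition using Yau's construction from \autoref{prop:yau}. By that proposition, $A^\alpha$ is already known to be hom-associative with twisting map $\alpha$, so the only remaining task is to check that $1_A$ is a weak identity element, i.e., that $1_A * a = a * 1_A = \alpha(a)$ holds for every $a \in A$.

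This is a one-line computation: unfolding the definition of the twisted multiplication $a * b \colonequals \alpha(ab)$ and using that $1_A$ is a two-sided identity for the original associative multiplication, we obtain $1_A * a = \alpha(1_A\, a) = \alpha(a)$ and, symmetrically, $a * 1_A = \alpha(a\, 1_A) = \alpha(a)$. Hence $1_A$ satisfies the weak unit axiom with twisting map $\alpha$.

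There is no genuine obstacle here; the corollary is essentially a direct reinterpretation of the definition of $A^\alpha$, and no properties of $\alpha$ beyond being a map (its $R$-linearity and multiplicativity are not even used for this particular verification) are required to certify that $1_A$ serves as a weak identity. One could remark, however, that the convention stated earlier in the excerpt that endomorphisms of unital algebras respect the identity element means $\alpha(1_A) = 1_A$, so in particular $1_A$ itself need \emph{not} be a weak identity in the usual associative sense (we do not claim $1_A * a = a$, only $1_A * a = \alpha(a)$), which is precisely why the weaker notion is the appropriate one in this setting.
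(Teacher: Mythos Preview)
Your proof is correct: the verification that $1_A*a=\alpha(1_Aa)=\alpha(a)=\alpha(a1_A)=a*1_A$ is exactly what is needed. The paper itself does not supply a proof of this corollary---it is simply cited as a result of Fr{\'e}gier and Gohr---so your direct unpacking of the definition is the natural (and essentially the only) argument here.
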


In general, a weak identity element does not need to be unique. However, if $\alpha$ in the above corollary is injective, then it is not too hard to see that the weak identity element $1_A$ of $A^\alpha$ is indeed unique (see \cite[Lemma 3.1]{BR22}):

\begin{lemma}[B{\"a}ck and Richter \cite{BR22}]\label{lem:unique-weak-identity}
If $A$ is an associative and unital $R$-algebra with $\alpha\in\End_R(A)$ injective, then $1_A$ is a unique weak identity element of $A^\alpha$.
\end{lemma}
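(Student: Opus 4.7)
The plan is to verify that $1_A$ is a weak identity element of $A^\alpha$, which is just the preceding corollary, and then to extract uniqueness from the injectivity of $\alpha$ by exploiting the defining property of a weak identity at the single test element $a = 1_A$.

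More concretely, suppose $e \in A$ is any weak identity element of $A^\alpha$. By definition this means $a * e = e * a = \alpha(a)$ for every $a \in A$. Specializing the first equation to $a = 1_A$ and unwinding the Yau-twisted multiplication gives
\[
\alpha(e) = \alpha(1_A \cdot e) = 1_A * e = \alpha(1_A).
\]
Since $\alpha$ is assumed injective, this forces $e = 1_A$. Combined with the previous corollary, which supplies $1_A$ as a weak identity, this shows $1_A$ is the unique weak identity of $A^\alpha$.

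There is no real obstacle here; the argument is a one-line computation once one notices that evaluating the weak-unitality equation at $a = 1_A$ reduces uniqueness entirely to the injectivity hypothesis on $\alpha$. The only thing worth being careful about is using the associative (untwisted) product $1_A \cdot e = e$ inside the definition $1_A * e = \alpha(1_A \cdot e)$, so that the twisting map $\alpha$ is applied to $e$ rather than to some twisted expression.
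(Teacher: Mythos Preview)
Your argument is correct. The paper does not actually supply its own proof of this lemma---it is quoted from \cite{BR22} without proof---so there is nothing to compare against here. That said, your computation is precisely the natural (and essentially the only) way to establish uniqueness: evaluate the weak-identity condition at $a = 1_A$, use $1_A \cdot e = e$ in the underlying associative product to get $\alpha(e) = \alpha(1_A)$, and invoke injectivity.
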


The following four lemmas (see \cite[Lemma 3.2, 3.3, 3.7, and 3.4]{BR22}) relates the zero divisors, commuter, derivations, and morphisms of $A$ to those of $A^\alpha$:

\begin{lemma}[B{\"a}ck and Richter \cite{BR22}]\label{lem:zero-divisors}
Let $A$ be an associative $R$-algebra with $\alpha\in\End_R(A)$. Then $D_l(A) \subseteq D_l(A^\alpha)$ and $D_r(A)\subseteq D_r(A^\alpha)$, with equality if $\alpha$ is injective.
\end{lemma}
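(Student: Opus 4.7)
The plan is to transport the zero-divisor conditions across Yau's construction, using only the defining identity $a*b \colonequals \alpha(ab)$ from \autoref{prop:yau} and, in one direction, injectivity of $\alpha$.

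For the forward inclusions, I would pick $a \in D_l(A)$ with a witness $b \neq 0$ satisfying $ab = 0$, and simply observe that $a*b = \alpha(ab) = \alpha(0) = 0$, so $a \in D_l(A^\alpha)$ with the same witness $b$. The argument for $D_r(A) \subseteq D_r(A^\alpha)$ is mirror-identical: from $ba = 0$ one gets $b*a = \alpha(ba) = 0$, so the same witness continues to work.

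For the reverse inclusions under the assumption that $\alpha$ is injective, I would take $a \in D_l(A^\alpha)$ with $b \neq 0$ such that $a*b = 0$, i.e., $\alpha(ab) = 0$; injectivity of $\alpha$ then forces $ab = 0$, placing $a$ in $D_l(A)$ with the same witness $b$. The argument for $D_r(A^\alpha) \subseteq D_r(A)$ is symmetric.

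I do not anticipate any real obstacle here: the statement reduces to a one-line unfolding of the Yau multiplication together with the injectivity of $\alpha$, and in every implication the witness $b$ carries over unchanged, so no bookkeeping is required beyond noting that $\alpha(0) = 0$ and, where needed, that $\alpha(x) = 0$ implies $x = 0$.
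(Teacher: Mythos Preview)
Your argument is correct: unfolding $a*b=\alpha(ab)$ immediately gives both inclusions, and injectivity of $\alpha$ gives the reverse ones, with the same witness $b$ throughout. The paper does not actually prove this lemma---it merely quotes it from \cite{BR22}---so there is no in-paper proof to compare against, but your reasoning is exactly the expected one-line verification.
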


\begin{lemma}[B{\"a}ck and Richter \cite{BR22}]\label{lem:commuter}
Let $A$ be an associative $R$-algebra with $\alpha\in\End_R(A)$. Then $C(A)\subseteq C(A^\alpha)$, with equality if $\alpha$ is injective.
\end{lemma}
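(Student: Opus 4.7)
The plan is to reduce the commuter question directly to the identity $[a,b]_{A^\alpha} = \alpha([a,b]_A)$, which follows immediately from the definition $a * b \colonequals \alpha(ab)$. Once that key identity is in hand, both inclusions become essentially formal.

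First I would compute the commutator in $A^\alpha$: for any $a,b \in A$,
\begin{equation*}
[a,b]_{A^\alpha} = a * b - b * a = \alpha(ab) - \alpha(ba) = \alpha\bigl([a,b]_A\bigr),
\end{equation*}
using $R$-linearity of $\alpha$. This observation does all the real work.

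For the forward inclusion, suppose $a \in C(A)$. Then $[a,b]_A = 0$ for every $b \in A$, and applying $\alpha$ gives $[a,b]_{A^\alpha} = \alpha(0) = 0$ for every $b \in A$; hence $a \in C(A^\alpha)$. For the reverse inclusion under the assumption that $\alpha$ is injective, suppose $a \in C(A^\alpha)$. Then $0 = [a,b]_{A^\alpha} = \alpha([a,b]_A)$ for every $b \in A$, and injectivity of $\alpha$ yields $[a,b]_A = 0$ for every $b \in A$, so $a \in C(A)$.

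There is no genuine obstacle here; the only point to watch is that $C(A^\alpha)$ is defined using the twisted product, so one must be careful to distinguish $[\cdot,\cdot]_A$ from $[\cdot,\cdot]_{A^\alpha}$ throughout. Note also that the underlying $R$-module of $A^\alpha$ is the same as that of $A$, so the quantifier "for all $b \in A$" is identical in both commuter definitions.
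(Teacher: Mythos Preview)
Your proof is correct and is exactly the natural argument. The paper itself does not prove this lemma; it simply cites it from \cite{BR22}, so there is no in-paper proof to compare against. The identity $[a,b]_{A^\alpha}=\alpha([a,b]_A)$ is precisely the observation that makes the result immediate, and your handling of both inclusions is clean and complete.
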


\begin{lemma}[B{\"a}ck and Richter \cite{BR22}]\label{lem:derivations}
Let $A$ be an associative and unital $R$-algebra with $\alpha\in\End_R(A)$ injective. Then $\Der_R(A^\alpha) =\{\delta\in\Der_R(A)\mid \delta\circ\alpha = \alpha\circ\delta\}$ if and only if $\delta'(1_A) = 0$ for any $\delta'\in\Der_R(A^\alpha)$.
\end{lemma}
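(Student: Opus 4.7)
The plan is to unpack the biconditional into its two implications, writing the defining identity for a derivation of $A^\alpha$ via $a*b=\alpha(ab)$ as
\[
\delta(\alpha(ab))=\alpha(\delta(a)b)+\alpha(a\delta(b)),
\]
and exploiting weak unitality of $A^\alpha$ (which by the Fr\'egier--Gohr corollary supplies $a*1_A=1_A*a=\alpha(a)$) together with injectivity of $\alpha$.

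The forward direction is quick. If the claimed set equality holds, then every $\delta'\in\Der_R(A^\alpha)$ is in particular an honest derivation on the associative unital algebra $A$, so the standard argument $\delta'(1_A)=\delta'(1_A\cdot 1_A)=2\delta'(1_A)$ forces $\delta'(1_A)=0$.

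The backward direction is the substantive part. Assuming $\delta'(1_A)=0$ for every $\delta'\in\Der_R(A^\alpha)$, I would establish the two inclusions separately. For $\supseteq$, given $\delta\in\Der_R(A)$ with $\delta\circ\alpha=\alpha\circ\delta$, substituting the commutation into the rewritten identity immediately yields $\delta(a*b)=\alpha(\delta(a)b+a\delta(b))=\delta(a)*b+a*\delta(b)$, so $\delta\in\Der_R(A^\alpha)$. For $\subseteq$, take $\delta'\in\Der_R(A^\alpha)$ and specialize its derivation identity to $b=1_A$; the hypothesis $\delta'(1_A)=0$ together with $a*1_A=\alpha(a)$ collapses it to $\delta'(\alpha(a))=\alpha(\delta'(a))$, giving $\delta'\circ\alpha=\alpha\circ\delta'$. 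Substituting this commutation back into $\delta'(\alpha(ab))=\alpha(\delta'(a)b+a\delta'(b))$ and cancelling the outer $\alpha$ via injectivity recovers the associative Leibniz rule, so $\delta'\in\Der_R(A)$.

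The key step worth flagging is that $\delta'(1_A)=0$ is exactly the hinge that upgrades a hom-derivation into an honest derivation: without it, specializing the derivation identity at $b=1_A$ leaves a residual term $a*\delta'(1_A)=\alpha(a\delta'(1_A))$ on the right, obstructing the commutation $\delta'\circ\alpha=\alpha\circ\delta'$ and hence blocking the injectivity-based cancellation that transfers the Leibniz rule from the $*$-multiplication back to the original multiplication on $A$. I do not anticipate any deeper obstacle.
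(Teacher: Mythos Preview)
Your proof is correct. Note, however, that this paper does not supply its own proof of the lemma: it is quoted verbatim from \cite{BR22} (Lemma~3.7 there) and stated without argument. So there is nothing in the present paper to compare against.

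For what it is worth, your argument is the natural one and is essentially how the result is proved in the source. The forward implication is immediate from the classical fact that derivations of an associative unital algebra annihilate the identity. For the reverse implication, the inclusion $\supseteq$ holds unconditionally (it does not use the hypothesis on $\delta'(1_A)$), while for $\subseteq$ your specialization $b=1_A$ together with $\delta'(1_A)=0$ gives $\delta'\circ\alpha=\alpha\circ\delta'$, and then injectivity of $\alpha$ cancels from $\alpha(\delta'(ab))=\alpha(\delta'(a)b+a\delta'(b))$ to recover the Leibniz rule on $A$. There is no gap.
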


\begin{lemma}[B{\"a}ck and Richter \cite{BR22}]\label{lem:morphisms}
Let $A$ be an associative R-algebra with $\alpha, \alpha'\in \End_R(A)$. Then $\{\varphi\in\End_R(A)\mid \varphi\circ \alpha = \alpha'\circ \varphi \}\subseteq \Hom_R(A^\alpha,A^{\alpha'})$, with equality if $\alpha'$ is injective.
\end{lemma}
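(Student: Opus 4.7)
The plan is to prove both inclusions by direct verification, using only the definition of the Yau twist $A^\alpha$ (\autoref{prop:yau}) and the defining conditions of a hom-associative algebra homomorphism, namely $R$-linearity, multiplicativity with respect to the twisted products, and commutation with the twisting maps.

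For the forward inclusion, I would take an arbitrary $\varphi \in \End_R(A)$ satisfying $\varphi \circ \alpha = \alpha' \circ \varphi$ and check that $\varphi$ lies in $\Hom_R(A^\alpha, A^{\alpha'})$. Since $\varphi$ is already $R$-linear and commutes with the twisting maps by hypothesis, it suffices to verify multiplicativity with respect to the twisted products. Writing the multiplications in $A^\alpha$ and $A^{\alpha'}$ in the Yau form $\alpha(\cdot\, \cdot)$ and $\alpha'(\cdot\, \cdot)$ respectively, this reduces to the chain
\[
\varphi(\alpha(ab)) = \alpha'(\varphi(ab)) = \alpha'(\varphi(a)\varphi(b)),
\]
which uses, in order, the intertwining hypothesis and the multiplicativity of $\varphi$ on the underlying associative algebra $A$.

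For the reverse inclusion under the additional assumption that $\alpha'$ is injective, I would take $\varphi \in \Hom_R(A^\alpha, A^{\alpha'})$. By definition, $\varphi$ is $R$-linear and satisfies $\varphi \circ \alpha = \alpha' \circ \varphi$, so the only thing left to show is that $\varphi$ is multiplicative with respect to the original associative multiplication on $A$. Running the chain above in reverse, multiplicativity with respect to the twisted products gives $\varphi(\alpha(ab)) = \alpha'(\varphi(a)\varphi(b))$, and the intertwining condition rewrites the left-hand side as $\alpha'(\varphi(ab))$. Injectivity of $\alpha'$ then cancels to yield $\varphi(ab) = \varphi(a)\varphi(b)$.

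The argument is really just bookkeeping and does not present any genuine obstacle; the only delicate point is the cancellation step, where injectivity of $\alpha'$ is essential. Without that hypothesis one can only conclude that $\varphi(ab)$ and $\varphi(a)\varphi(b)$ agree modulo $\ker\alpha'$, which explains why injectivity is not a mere convenience but is exactly what is needed for the two sets to coincide.
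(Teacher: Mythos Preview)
Your argument is correct. The paper itself does not supply a proof of this lemma; it is quoted from \cite[Lemma~3.4]{BR22} and stated without proof here. What you have written is exactly the standard direct verification one expects: unwind the Yau-twisted products, use the intertwining relation to pass $\alpha'$ across $\varphi$, and for the reverse inclusion cancel $\alpha'$ using injectivity. There is nothing to add or correct.
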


Recall that an associative $R$-algebra becomes a Lie algebra over $R$ when the commutator is used as a Lie bracket. The motivation for introducing hom-associative algebras in the first place was to find a counterpart to associative algebras when replacing Lie algebras by so-called \emph{hom-Lie algebras} in the above construction, the latter first introduced by Hartwig, Larsson, and Silvestrov \cite{HLS06}.

\begin{definition}[Hom-Lie algebra]
A \emph{hom-Lie algebra} over an associative, commutative, and unital ring $R$ is a nonassociative $R$-algebra $L$ with multiplication $[\cdot,\cdot]_L\colon L\times L\to L$, called the \emph{hom-Lie bracket}, equipped with an $R$-linear map $\alpha\colon L\to L$, called \emph{twisting map}, satisfying for all $a,b,c\in L$ the following identities:
\begin{align*}
[a,a]_L&=0\quad \text{(alternativity)},\\
[\alpha(a),[b,c]_L]_L+[\alpha(c),[a,b]_L]_L+[\alpha(b),[c,a]_L]_L&=0\quad \text{(hom--Jacobi identity).}
\end{align*}
\end{definition}

\begin{remark}
A hom-Lie algebra with twisting map the identity map is a Lie algebra.
\end{remark}

The following result is due to Makhlouf and Silvestrov \cite[Proposition 1.6]{MS08}.

\begin{proposition}[Makhlouf and Silvestrov \cite{MS08}]\label{prop:hom-Lie-commutator}
If $A$ is a hom-associative $R$-algebra with twisting map $\alpha$, then $A$ becomes a hom-Lie algebra over $R$ with twisting map $\alpha$ when the commutator is used as a hom-Lie bracket.
\end{proposition}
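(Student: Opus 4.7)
The plan is to verify the two defining identities of a hom-Lie algebra for $[\cdot,\cdot]\colon A\times A\to A$ given by $[a,b]\colonequals ab-ba$, using $\alpha$ as the twisting map. Alternativity is immediate from the definition, since $[a,a]=aa-aa=0$ for any $a\in A$. The only real content is the hom-Jacobi identity, and here I would rely exclusively on the hom-associative identity $\alpha(x)(yz)=(xy)\alpha(z)$ and the bilinearity of the multiplication.

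My strategy for hom-Jacobi is a direct expansion. First I would write out
\[
[\alpha(a),[b,c]]+[\alpha(c),[a,b]]+[\alpha(b),[c,a]]
\]
by applying the commutator twice, producing twelve terms, each of the form $\alpha(x)(yz)$ or $(yz)\alpha(x)$ for $\{x,y,z\}=\{a,b,c\}$. I would then apply hom-associativity to the six ``left-type'' terms, rewriting each $\alpha(x)(yz)$ as $(xy)\alpha(z)$, so that every term becomes one of the form $(yz)\alpha(x)$. After this normalization, the twelve terms naturally split into six $+$/$-$ pairs that cancel by the cyclic symmetry in $a,b,c$.

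Concretely, I expect the cancellation pattern to be the standard one: for instance, the term $(ab)\alpha(c)$ coming from $[\alpha(a),[b,c]]$ (after applying hom-associativity to $\alpha(a)(bc)$) will cancel against the term $-(ab)\alpha(c)$ coming from $[\alpha(c),[a,b]]$ (namely $-(ab)\alpha(c)$ appearing directly without needing hom-associativity). Running through all three cyclic rotations of $(a,b,c)$ and both signs yields the full cancellation.

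The only obstacle is bookkeeping — keeping the six hom-associativity applications and the six unchanged terms straight — but there is no conceptual difficulty, since hom-associativity is exactly the rewriting rule needed to equate the two shapes of triple products that appear. No use of unitality, injectivity of $\alpha$, or any structure beyond $R$-bilinearity and the hom-associative identity is required, so the result holds in the stated generality.
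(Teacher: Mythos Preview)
Your argument is correct: alternativity is trivial, and your direct expansion of the hom-Jacobi sum into twelve triple products, followed by rewriting the six terms of the form $\alpha(x)(yz)$ as $(xy)\alpha(z)$ via hom-associativity, yields six cancelling $\pm$ pairs exactly as you describe. Note that the paper itself does not supply a proof of this proposition; it is quoted from Makhlouf and Silvestrov \cite[Proposition~1.6]{MS08}, and the original proof there is precisely the computation you outline.
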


\subsection{Iterated differential polynomial rings and the higher-order Weyl algebras}\label{subsec:iterated}
Given an associative and unital ring $R$, in 1933 Ore \cite{Ore33} introduced what he called a ``noncommutative polynomial ring'' over $R$. As a left $R$-module, this ring is equal to $R[x]$, but is equipped with a noncommutative multiplication subject only to the relation $xR\subseteq Rx+R$. One can show that such a multiplication exists precisely when there is a ring endomorphism $\sigma\colon R\to R$ and a \emph{$\sigma$-derivation} $\delta\colon R\to R$, that is, an additive map satisfying, for any $r,s\in R$, 
\[\delta(rs)=\sigma(r)\delta(s)+\delta(r)s.\]
The multiplication is then uniquely defined by the relation
\begin{equation}
xr=\sigma(r)x+\delta(r).\label{eq:ore-mult}
\end{equation}
The left $R$-module $R[x]$ equipped with the above multiplication is an associative and unital ring denoted by $R[x;\sigma,\delta]$ and called an \emph{Ore extension} of $R$. From \eqref{eq:ore-mult}, one can deduce that the multiplication in $R[x;\sigma,\delta]$ is given on monomials (and then extended bi-additively to the whole of $R[x;\sigma,\delta]$) by
\begin{equation}
(rx^i)(sx^j)=\sum_{\ell\in\mathbb{N}}r\pi_\ell^i(s)x^{j+\ell},\label{eq:ore-mult2}
\end{equation}
for any $r,s\in R$ and $i,j\in\N$ where $\pi_\ell^i\colon R\to R$ is defined as the sum of all $\binom{i}{\ell}$ compositions of $\ell$ instances of $\sigma$ and $i-\ell$ instances of $\delta$. For instance, $\pi_2^3\colonequals\sigma\circ\sigma\circ\delta + \sigma\circ\delta\circ\sigma + \delta\circ\sigma\circ\sigma$ while $\pi^0_0$ is defined as $\id_R$. Whenever $\ell>i$, we set $\pi_\ell^i\colonequals 0_R$. 

If $\sigma=\id_R$, then $R[x;\id_R,\delta]$, written $R[x;\delta]$, is called a \emph{differential polynomial ring} of $R$, and $\delta$ is simply a derivation (see \autoref{subsec:non-assoc}). If in addition $\delta=0_R$, then $R[x;0_R]$ is just the ordinary polynomial ring $R[x]$. If $\delta_1,\ldots,\delta_n$ are pairwise commuting derivations on $R$, then we may construct an \emph{iterated differential polynomial ring} of $R$ as follows (see e.g.  \cite[Exercise 2H]{GW04}). First, we set $S_1\colonequals R[x_1;\delta_1]$. Then $\delta_2$ extends uniquely to a derivation $\widehat{\delta}_2$ on $S_1$ such that $\widehat\delta_2(x_1)=0$. Next, we set $S_2\colonequals S_1[x_2;\widehat{\delta}_2]$. Once $S_k$ has been constructed for some $\ell<n$, we define $S_{\ell+1}\colonequals S_k[x_{\ell+1};\widehat{\delta}_{\ell+1}]$ where $\widehat{\delta}_{\ell+1}$ is the unique derivation on $S_\ell$ such that $\widehat\delta_{\ell+1}\vert_R=\delta_{\ell+1}$ and $\widehat\delta_{\ell+1}(x_j)=0$ for $1\leq j\leq \ell$. We may now construct an iterated differential polynomial ring $R[x_1;\delta_1][x_2;\widehat\delta_2]\cdots[x_n;\widehat{\delta}_n]$, which we denote by $R[x_1,\ldots,x_n;\delta_1,\ldots,\delta_n]$. $R[x_1,\ldots,x_n;\delta_1,\ldots,\delta_n]$ is moreover a free left $R$-module with basis all monomials $x_1^{i_1}\cdots x_n^{i_n}$ where $i_1,\ldots,i_n\in\N$. The multiplication in this ring is given by the bi-additive extension of the relations
\begin{align}
&\big(rx_1^{i_1}\cdots x_n^{i_n}\big) \big(sx_1^{j_1}\cdots x_n^{j_n}\big)\nonumber\\&=\sum_{\ell_1=0}^{i_1}\cdots\sum_{\ell_n=0}^{i_n}\binom{i_1}{\ell_1}\cdots\binom{i_n}{\ell_n}r\delta_1^{i_1-\ell_1}\circ \cdots \circ\delta_n^{i_n-\ell_n}(s)x_1^{j_1+\ell_1}\cdots x_n^{j_n+\ell_n}\label{eq:multi-multiplication}
\end{align}
for any $r,s\in R$ and $i_1,\ldots,i_n,j_1,\ldots,j_n\in\N$.

The \emph{$n$th Weyl algebra} $A_n$, where $n\in\N_{>0}$, over a field $K$ of characteristic zero is the free, associative, and unital $K$-algebra on $2n$ letters $x_1,\ldots, x_n, y_1,\ldots, y_n$ modulo the commutation relations $[x_i,x_j]=[y_i,y_j]=0$ and $[x_i,y_j]=\delta_{ij}$ for $1\leq i,j\leq n$. Here, $\delta_{ij}$ denotes the \emph{Kronecker delta} on $A_n$,
\[\delta_{ij}\colonequals \begin{cases}1_{A_n}&\text{if } i=j,\\0&\text{otherwise.}\end{cases}\]

The Weyl algebras $A_1,\ldots,A_n$ are naturally ordered by inclusion, $A_1\subsetneq \cdots \subsetneq A_n$, and so we also refer to them as \emph{higher-order Weyl algebras}. The higher-order Weyl algebras show up in many different contexts and under many different guises (see e.g. the excellent survey article by Coutinho \cite{Cou97}). They are probably best known as algebras of quantum mechanical operators, where each $x_i$ plays the role of a momentum operator, and each $y_i$ the role of a position operator, as well as for their relation to the famous \emph{Jacobian Conjecture} described here below.

Denote by $\partial /\partial x_1,\ldots,\partial/\partial x_n, \partial/\partial y_1,\ldots, \partial/\partial y_n$ the formal partial derivatives with respect to $x_1,\ldots,x_n, y_1,\ldots,y_n$ on $A_n$. It is not too hard to see that $\ad_{x_\ell}=\partial/\partial y_\ell$ and $\ad_{y_\ell}=-\partial/\partial x_\ell$ for $1\leq \ell\leq n$. The ordinary polynomial ring $R\colonequals K[y_1,\ldots,y_n]$ in $n$ indeterminates over $K$ is (isomorphic to) a subring of $A_n$. If we restrict $\partial/\partial y_1,\ldots, \partial/\partial y_n$ to $R$, then $\partial/\partial y_1,\ldots, \partial/\partial y_n$ are pairwise commuting derivations on $R$. Hence we may form the iterated differential polynomial ring (see \autoref{subsec:iterated}), or indeed a $K$-algebra, $R[x_1,\ldots,x_n;\partial/\partial y_1,\ldots \partial/\partial y_n]$. As it turns out, $A_n$ is isomorphic to the above $K$-algebra, with the isomorphism induced by the canonical assignment $x_i\mapsto x_i$, $y_i\mapsto y_i$ (see e.g. \cite[Exercise 2J]{GW04}).

As a $K$-vector space, $A_n$ has a basis consisting of monomials $y_1^{i_1}\cdots y_n^{i_n}x_1^{j_1}\cdots x_n^{j_n}$, $i_1,\dots,i_n, j_1,\dots, j_n\in\N$, and the identity element $1_{A_n}$ is $1_Ky_1^0\cdots y_n^0x_1^0\cdots x_n^0$. An element of $A_n$ is thus a finite $K$-linear combination $p(y_1,\ldots,y_n,x_1,\ldots,x_n)$ of such monomials, i.e.
 a polynomial. It is not too hard to prove that $A_n$ contains no zero divisors and has a center equal to $K$. Since $A_n$ contains no zero divisors, any nonzero, $K$-linear, and multiplicative map $\varphi$ on $A_n$ respects the identity element and is hence a $K$-algebra endomorphism;  $\varphi(1_{A_n})=\varphi(1_{A_n})\varphi(1_{A_n})\iff \varphi(1_{A_n})(1_{A_n}-\varphi(1_{A_n}))=0\iff \varphi(1_{A_n})=1_{A_n}$. Littlewood \cite[Theorem X]{Lit33} proved that $A_1$ is simple when $K=\mathbb{R}$ and when $K=\mathbb{C}$, and Hirsch \cite[Theorem]{Hir37} then showed that this result also holds for any higher-order Weyl algebra $A_n$ over a field $K$ of characteristic zero. Sridharan \cite[Remark 6.2 and Theorem 6.1]{Sri61} has shown that the Hochschild cohomology of $A_n$ vanishes in all positive degrees (see also \cite[Theorem 5]{GG14} for a different proof of this fact by Gerstenhaber and Giaquinto). The vanishing in the first degree implies that all derivations on $A_n$ are inner, and the vanishing in the second degree implies that $A_n$ is formally rigid as an associative algebra in the sense of Gerstenhaber \cite{Ger64}. 

Since $A_n$ is simple, any $K$-algebra endomorphism on $A_n$ is injective (the kernel of any $K$-algebra endomorphism is an ideal, and if $A_n$ is simple, the kernel must equal the zero ideal). In \cite[II. Probl\`{e}mes]{Dix68}, Dixmier asked whether all $K$-algebra endomorphisms on $A_1$ are also surjective, so that $\End_K(A_1)=\Aut_K(A_1)$? The conjecture that the equality $\End_K(A_1)=\Aut_K(A_1)$ holds is known as the \emph{Dixmier Conjecture for $A_1$}, and in a recent preprint, Zheglov \cite[Theorem 1.1]{Zhe24} claims that it is actually true. For a fixed $n\in\mathbb{N}_{>0}$, the \emph{Dixmier Conjecture} in general asserts that $\End_K(A_n)\backslash\{0\}=\Aut_K(A_n)$. The conjecture is still open for any $n$ greater than 1.

\begin{conjecture}[The Dixmier Conjecture ($\DC_n$)]\label{conj:Dixmier}
$\End_K(A_n)=\Aut_K(A_n)$.
\end{conjecture}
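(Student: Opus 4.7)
The plan is first to note that the inclusion $\Aut_K(A_n)\subseteq \End_K(A_n)\setminus\{0\}$ is automatic, and that, as already observed in the paragraphs preceding the conjecture, simplicity of $A_n$ combined with the fact that every nonzero $K$-algebra endomorphism must fix $1_{A_n}$ forces every nonzero $\varphi\in\End_K(A_n)$ to be injective. What remains is therefore to establish surjectivity of an arbitrary such $\varphi$.

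The most promising general line of attack follows Tsuchimoto \cite{Tsu05} and Kanel-Belov and Kontsevich \cite{KBK07} and proceeds by reduction to positive characteristic. In characteristic $p>0$, the $n$th Weyl algebra is an Azumaya algebra of rank $p^{2n}$ over its center, and one can functorially assign to a $K$-algebra endomorphism a Poisson endomorphism of $\mathbb{F}_p[u_1,\ldots,u_n,v_1,\ldots,v_n]$ equipped with its canonical symplectic bracket. The scheme would then be (i)~to pass to a finitely generated $\Z$-subalgebra of $K$ over which $\varphi$ is defined and choose a good integral model; (ii)~to show that for infinitely many primes $p$, the induced Poisson endomorphism in characteristic $p$ is in fact a Poisson \emph{auto}morphism; and (iii)~to lift this information back to characteristic zero to deduce surjectivity of $\varphi$.

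An alternative, more direct route would use the Bernstein filtration $F^\bullet A_n$, whose associated graded is the symplectic polynomial ring $K[x_1,\ldots,x_n,y_1,\ldots,y_n]$. Here one attempts to show that every nonzero $\varphi\in\End_K(A_n)$ can be put in filtered form (possibly after a change of generators), hence induces a Poisson endomorphism on the associated graded; surjectivity of the latter---essentially the content of the polynomial Jacobian Conjecture in $2n$ variables---would then bootstrap back to surjectivity of $\varphi$ by a straightforward induction on filtration degree.

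The fundamental obstacle is that step (ii) above is itself stably equivalent to the Jacobian Conjecture of Keller, which is open in dimension $\geq 2$; this is precisely the stable equivalence invoked by Tsuchimoto and Kanel-Belov--Kontsevich. Zheglov's recent preprint \cite{Zhe24} treats the case $n=1$ through a careful normal-form analysis of the canonical pair $(x_1,y_1)$, but those arguments exploit low-dimensional features of $A_1$ that do not transfer cleanly to the higher-order setting. Consequently, short of a breakthrough on the Jacobian Conjecture itself, a complete proof of $\mathrm{GDC}_n$ for general $n$ appears out of reach, which is of course why the statement is recorded here as a conjecture rather than a theorem.
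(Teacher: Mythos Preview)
The statement is recorded in the paper as a \emph{conjecture}, not a theorem; the paper offers no proof of $\mathrm{GDC}_n$ and merely recalls it as background before discussing its stable equivalence with the Jacobian Conjecture. Your proposal correctly recognizes this: you outline the standard reduction-mod-$p$ and Bernstein-filtration strategies, explain why each bottoms out in an open problem (the Jacobian Conjecture), and explicitly conclude that a proof for general $n$ is out of reach. That is an accurate assessment, not a proof attempt, and there is nothing in the paper to compare it against.
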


Let $\varphi\colonequals (\varphi_1,\ldots,\varphi_n)\colon K^n\to K^n$ be a \emph{polynomial map}, that is, a map of the form $(x_1,\ldots,x_n)\mapsto (\varphi_1(x_1,\ldots,x_n),\ldots,\varphi_n(x_1,\ldots,x_n))$ where $\varphi_1,\ldots\varphi_n\in K[x_1,\ldots,x_n]$, the ordinary polynomial ring in $n$ indeterminates over $K$. We say that $\varphi$ is \emph{invertible} if $\varphi$ has an inverse which is also a polynomial map. Let $J(\varphi)\colonequals (\partial \varphi_i/\partial x_j)_{1\leq i,j\leq n}$, the \emph{Jacobian matrix}. 
The \emph{Jacobian Conjecture} is a famous conjecture in algebraic geometry, which reads as follows:

\begin{conjecture}[The Jacobian Conjecture ($\JC_n$)]\label{conj:Jacobian}If $\varphi\colon K^{n}\to K^{n}$ is a polynomial map with $\det(J(\varphi))\in K\backslash\{0\}$, then $\varphi$ is invertible.
\end{conjecture}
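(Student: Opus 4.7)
The Jacobian Conjecture is a well-known open problem, so what follows is a plan of attack rather than a recipe for completion. Since the introduction already records the equivalence of $\JC_n$, ranging over all $n$, with the stabilized Generalized Dixmier Conjecture, the natural plan is to go after $\GDC_n$ directly and transport the conclusion back across that equivalence: the advantage of working in $A_n$ rather than in $K[x_1,\ldots,x_n]$ is that the noncommutative Weyl relations rigidify endomorphisms in a way that polynomial maps of affine space alone do not.

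Concretely, given $\varphi=(\varphi_1,\ldots,\varphi_n)$ with $\det J(\varphi)\in K\backslash\{0\}$, I would first build the associated $K$-algebra endomorphism $\Phi\in\End_K(A_n)$ by declaring $\Phi(y_i)\colonequals\varphi_i$ and taking each $\Phi(x_i)$ to be the unique first-order differential operator such that the Weyl relations $[x_i,y_j]=\delta_{ij}$ are preserved --- the hypothesis $\det J(\varphi)\in K\backslash\{0\}$ is precisely what makes this extension consistent, via Cramer's rule applied to the Jacobian matrix. A positive answer to $\GDC_n$ would then produce $\Phi^{-1}\in\Aut_K(A_n)$, whose restriction to the polynomial subring $K[y_1,\ldots,y_n]\subset A_n$ supplies the required inverse of $\varphi$.

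To handle $\GDC_n$ itself I would follow the Tsuchimoto and Kanel-Belov--Kontsevich strategy: fix a nonzero $\Phi\in\End_K(A_n)$, spread it out over a finitely generated $\Z$-subalgebra of $K$, and reduce modulo large primes $p$. The reduction acts on the $p$-center of the mod-$p$ Weyl algebra, a polynomial ring in $2n$ variables carrying a canonical Poisson structure, and the aim is to show that the induced Poisson endomorphism is invertible with polynomial inverse of degree uniformly bounded in $p$. Such a uniform bound would descend through an ultrafilter construction to produce a two-sided inverse of $\Phi$ itself, yielding $\GDC_n$.

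The main obstacle --- and the precise point where the century-old difficulty of the Jacobian Conjecture concentrates --- is producing this uniform degree bound on the induced Poisson automorphism. For $n=1$ a direct filtration argument, of the kind used in Zheglov's recent preprint, is tractable because the $p$-center has dimension two; for $n\geq 2$ the $p$-center is at least four-dimensional, and no existing technique supplies a degree bound that is independent of $p$. Any honest plan must therefore stake its success on inventing a new combinatorial or symplectic invariant of the $p$-center that is stable under endomorphism and polynomially bounded in the natural parameters. Absent such an invariant, the plan stalls at exactly the step where every existing attack has stalled, and no portion of the present paper's hom-associative machinery shortcuts this obstruction --- the stable equivalence of \autoref{prop:conj} reroutes the difficulty, but does not dissolve it.
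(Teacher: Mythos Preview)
The statement you were asked to address is \autoref{conj:Jacobian}, which the paper records as a \emph{conjecture}, not a theorem: there is no proof in the paper to compare your proposal against. The paper merely states $\JC_n$, recalls the known implications $\GDC_n\Rightarrow\JC_n$ and $\JC_{2n}\Rightarrow\GDC_n$, and moves on.

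Your write-up is therefore appropriate in spirit: you correctly flag that the problem is open, and your outline of the Tsuchimoto and Kanel-Belov--Kontsevich reduction-mod-$p$ programme is an accurate summary of the known landscape. You are also right that the entire difficulty concentrates in the uniform degree bound on the induced map of the $p$-center, and that nothing in the present paper's hom-associative framework circumvents that bottleneck; \autoref{prop:conj} only adds another node to the web of equivalent reformulations.

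Two small technical cautions on your sketch. First, in the passage from $\varphi$ to $\Phi\in\End_K(A_n)$, setting $\Phi(y_i)=\varphi_i$ and solving $[\Phi(x_i),\Phi(y_j)]=\delta_{ij}$ via Cramer's rule does produce first-order operators $\Phi(x_i)=\sum_j c_{ij}(y)x_j$, but you must also check $[\Phi(x_i),\Phi(x_j)]=0$; this is an integrability condition on the inverse Jacobian and does hold, but it is not automatic from the commutation relations with the $\Phi(y_j)$ alone. Second, once $\GDC_n$ hands you $\Phi^{-1}$, the claim that $\Phi^{-1}$ restricts to $K[y_1,\ldots,y_n]$ is not immediate: a priori $\Phi^{-1}(y_i)$ could involve the $x_j$, and one needs a short filtration or centralizer argument to rule this out. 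Both points are standard and appear in the literature on the $\GDC_n\Rightarrow\JC_n$ implication, but they are genuine steps rather than formalities.
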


It is well known that $\DC_n$ implies $\JC_n$. Tsuchimoto \cite[Corollary 7.3]{Tsu05} and Kanel-Belov and Kontsevich \cite[Theorem 1]{KBK07} have independently proven that $\JC_{2n}$ implies $\DC_{n}$ (note that $\JC_n$ is true when $n=1$). The conjunction of the conjectures $\DC_n$ for all $n\in\N_{>0}$ is known as the \emph{Stable Dixmier Conjecture}, and it is denoted by $\DC_\infty$. Similarly, the conjunction of the conjectures $\JC_n$ for all $n\in\N_{>0}$ is known as the \emph{Stable Jacobian Conjecture}, and it is denoted by $\JC_\infty$. Hence $\DC_\infty$ and $\JC_\infty$ are equivalent, or put differently, $\DC_n$ and $\JC_n$ are \emph{stably equivalent}.

\section{Hom-associative iterated differential polynomial rings}\label{sec:hom-diff}
Let $R$ be a nonassociative ring with additive maps $\sigma,\delta\colon R\to R$. The \emph{nonassociative Ore extension} of $R$, denoted by $R[x;\sigma,\delta]$, is the set of formal sums $\sum_{i\in\mathbb{N}}r_i x^i$ where only finitely many $r_i\in R$ are nonzero, equipped with pointwise addition and multiplication defined by the bi-additive extension of the relations \eqref{eq:ore-mult2}. Now, if $R$ is hom-associative with twisting map $\alpha$, we would like to extend, in a natural way, $\alpha$ to a map $\widehat\alpha$ on $R[x;\sigma,\delta]$, such that $R[x;\sigma,\delta]$ becomes hom-associative with twisting map $\widehat\alpha$. In particular, if $\alpha$ and $\sigma$ are ring endomorphisms and $\delta$ is a $\sigma$-derivation on an associative and unital ring $R$, then we would like to extend $\alpha$ to a ring endomorphism $\widehat\alpha$ on $R[x;\sigma,\delta]$ such that $R^\alpha[x;\sigma,\delta]=R[x;\sigma,\delta]^{\widehat\alpha}$ (see \autoref{prop:yau} in \autoref{subsec:hom}). The next result, which is a slightly improved version of \cite[Corollary 3.2]{BRS18}, provides a way to extend $\alpha$ in this way.

\begin{lemma}\label{lem:homogeneous-extension}
Let $R$ be an associative and unital ring with ring endomorphisms $\alpha$ and $\sigma$, and a $\sigma$-derivation $\delta$. Then $\alpha$ extends uniquely to a ring endomorphism $\widehat\alpha$ on $R[x;\sigma,\delta]$ such that $\widehat\alpha(x)=x$ if and only if $\alpha$ commutes with $\sigma$ and $\delta$.
\end{lemma}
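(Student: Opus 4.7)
The plan is a standard necessary/sufficient argument, where the conditions come out by testing the would-be homomorphism $\widehat\alpha$ against the fundamental relation $xr = \sigma(r)x + \delta(r)$, and the construction uses the explicit multiplication formula \eqref{eq:ore-mult2}.

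First I would prove the forward (``only if'') direction. Suppose $\widehat\alpha$ is a ring endomorphism on $R[x;\sigma,\delta]$ extending $\alpha$ with $\widehat\alpha(x)=x$. For any $r\in R$, applying $\widehat\alpha$ to $xr = \sigma(r)x+\delta(r)$ and using multiplicativity together with $\widehat\alpha(x)=x$ gives
\[
x\alpha(r)=\alpha(\sigma(r))x+\alpha(\delta(r)).
\]
Using the defining relation \eqref{eq:ore-mult} once more on the left-hand side yields
\[
\sigma(\alpha(r))x+\delta(\alpha(r))=\alpha(\sigma(r))x+\alpha(\delta(r)).
\]
Reading off the coefficients of $x^1$ and $x^0$, using that $R[x;\sigma,\delta]$ is a free left $R$-module on the monomials $x^i$, forces $\sigma\alpha=\alpha\sigma$ and $\delta\alpha=\alpha\delta$.

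For the reverse direction, assume $\alpha$ commutes with both $\sigma$ and $\delta$, and define $\widehat\alpha\colon R[x;\sigma,\delta]\to R[x;\sigma,\delta]$ by
\[
\widehat\alpha\Bigl(\sum_{i\in\N} r_ix^i\Bigr)\colonequals\sum_{i\in\N}\alpha(r_i)x^i.
\]
This is additive by construction, sends $1_R$ to $1_R$, restricts to $\alpha$ on $R$, and fixes $x$. The only nontrivial point is multiplicativity, which I would check on a product of monomials $(rx^i)(sx^j)$ using \eqref{eq:ore-mult2}. The key observation is that $\alpha\circ\pi_\ell^i=\pi_\ell^i\circ\alpha$: indeed, $\pi_\ell^i$ is by definition the sum of all $\binom{i}{\ell}$ compositions of $\ell$ copies of $\sigma$ and $i-\ell$ copies of $\delta$, and each such composition commutes with $\alpha$ by the hypothesis $\alpha\sigma=\sigma\alpha$ and $\alpha\delta=\delta\alpha$. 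Consequently,
\[
\widehat\alpha\bigl((rx^i)(sx^j)\bigr)=\sum_{\ell\in\N}\alpha(r)\alpha(\pi_\ell^i(s))x^{j+\ell}=\sum_{\ell\in\N}\alpha(r)\pi_\ell^i(\alpha(s))x^{j+\ell}=\widehat\alpha(rx^i)\widehat\alpha(sx^j),
\]
and multiplicativity extends bi-additively to the whole of $R[x;\sigma,\delta]$.

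Uniqueness is then immediate: $R[x;\sigma,\delta]$ is generated as a ring by $R$ and $x$, so any ring endomorphism whose restriction to $R$ equals $\alpha$ and which sends $x$ to $x$ must agree with $\widehat\alpha$ on all monomials, and hence on all of $R[x;\sigma,\delta]$. I do not anticipate a genuine obstacle; the only point worth handling carefully is the bookkeeping in the commutation $\alpha\circ\pi_\ell^i=\pi_\ell^i\circ\alpha$, which is where the hypothesis is used, and which is essentially an induction on $i$ (or a direct appeal to the fact that commuting with each factor implies commuting with any composition).
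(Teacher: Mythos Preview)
Your proposal is correct and matches the paper's proof essentially line for line: both directions are handled identically, testing $\widehat\alpha$ against $xr=\sigma(r)x+\delta(r)$ for necessity, and using $\alpha\circ\pi_\ell^i=\pi_\ell^i\circ\alpha$ together with \eqref{eq:ore-mult2} for sufficiency. The paper folds uniqueness into the phrase ``by necessity'' when defining $\widehat\alpha(rx^i)\colonequals\alpha(r)x^i$, which is the same observation you spell out at the end.
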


\begin{proof}
Assume that $\alpha$ is a ring endomorphism on $R$ that extends uniquely to a ring endomorphism $\widehat\alpha$ on $S\colonequals R[x;\sigma,\delta]$ such that $\widehat\alpha(x)=x$. Then $\alpha(\sigma(r))x+\alpha(\delta(r))=\widehat\alpha(\sigma(r)x+\delta(r))=\widehat\alpha(xr)=\widehat\alpha(x)\alpha(r)=x\alpha(r)=\sigma(\alpha(r))x+\delta(\alpha(r))$ for any $r\in R$. By comparing coefficients, $\alpha$ commutes with $\sigma$ and $\delta$.

Now assume that $\alpha$ is a ring endomorphism on $R$ that commutes with $\sigma$ and $\delta$. We show that $\alpha$ extends uniquely to a ring endomorphism $\widehat\alpha$ on $S$ such that $\widehat\alpha(x)=x$. By necessity, for any $r\in R$ and $i\in\N$, $\widehat\alpha(rx^i)\colonequals\alpha(r)\widehat\alpha(x)^i=\alpha(r)x^i$ where $\widehat\alpha$ is extended additively to the whole of $S$. It thus suffices to show that $\widehat\alpha\left(rx^isx^j\right)=\widehat\alpha\left(rx^i\right)\widehat\alpha\left(sx^j\right)$ for any $r,s\in R$ and $i,j\in\N$. Since $\alpha$ commutes with $\sigma$ and $\delta$, it also commutes with $\pi_\ell^i$ for any $i,\ell\in\N$. Hence
\begin{align*}
\widehat\alpha\left(rx^isx^j\right)&=\widehat\alpha\left(\sum_{\ell\in\N}r\pi_k^i(s)x^{j+\ell}\right)=\sum_{\ell\in\N}\widehat\alpha\left(r\pi_\ell^i(s)x^{j+\ell}\right)=\sum_{\ell\in\N}\alpha(r)\alpha(\pi_\ell^i(s))x^{j+\ell}\\&=\sum_{\ell\in\N}\alpha(r)\pi_\ell^i(\alpha(s))x^{j+\ell}= \alpha(r)x^i\alpha(s)x^j=\widehat\alpha\left(rx^i\right)\widehat\alpha\left(sx^j\right).
\end{align*}

That $\widehat\alpha$ respects the identity element follows from $\alpha$ respecting the identity element.
\end{proof}

We say that $\alpha$ in \autoref{lem:homogeneous-extension} is \emph{extended homogeneously} to $\widehat\alpha$. With this notation, we have the following result (see \cite[Proposition 7]{Bac22}), which is a slightly improved version of \cite[Proposition 5.5]{BRS18}:

\begin{proposition}[B{\"a}ck \cite{Bac22}]\label{prop:hom*ore}
Let $R$ be an associative and unital ring with ring endomorphisms $\alpha$ and $\sigma$, and a $\sigma$-derivation $\delta$. If $\alpha$ is extended homogeneously to a ring endomorphism $\widehat\alpha$ on $R[x;\sigma,\delta]$, then $R[x;\sigma,\delta]^{\widehat\alpha}=R^\alpha[x;\sigma,\delta]$.
\end{proposition}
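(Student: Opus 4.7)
The plan is to reduce the claim to a direct computation on monomials. As sets, $R[x;\sigma,\delta]^{\widehat\alpha}$ and $R^\alpha[x;\sigma,\delta]$ are both the collection of finitely supported formal sums $\sum_{i\in\N} r_ix^i$ with $r_i\in R$, and both carry the same pointwise addition (the Yau twist in \autoref{prop:yau} leaves addition unchanged, and the nonassociative Ore extension is by definition equipped with pointwise addition). So it is enough to verify that the two multiplications agree, and since both multiplications are bi-additive, it suffices to check agreement on monomials $rx^i$ and $sx^j$ with $r,s\in R$ and $i,j\in\N$.

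Next, I would unpack the left-hand side using \autoref{lem:homogeneous-extension}. The product of $rx^i$ and $sx^j$ in $R[x;\sigma,\delta]^{\widehat\alpha}$ is, by \autoref{prop:yau}, equal to $\widehat\alpha\bigl((rx^i)(sx^j)\bigr)$ where the inner product is the associative Ore multiplication \eqref{eq:ore-mult2}. Applying $\widehat\alpha$ term-by-term, together with the formula $\widehat\alpha(tx^k)=\alpha(t)x^k$ established in the proof of \autoref{lem:homogeneous-extension}, yields
\begin{equation*}
\widehat\alpha\!\left(\sum_{\ell\in\N}r\pi_\ell^i(s)x^{j+\ell}\right)=\sum_{\ell\in\N}\alpha\bigl(r\pi_\ell^i(s)\bigr)x^{j+\ell}.
\end{equation*}

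For the right-hand side, I would observe that the key conceptual point is that the nonassociative Ore extension $R^\alpha[x;\sigma,\delta]$ uses $\sigma$ and $\delta$ only as additive maps, so the auxiliary maps $\pi_\ell^i$ appearing in \eqref{eq:ore-mult2} are literally the same additive maps on the common underlying abelian group as in the associative Ore extension (they are sums of compositions of $\sigma$ and $\delta$, and involve no multiplication in $R$). The twist affects only the multiplication, where $r$ and $\pi_\ell^i(s)$ are now multiplied in $R^\alpha$, giving $r\cdot_{R^\alpha}\pi_\ell^i(s)=\alpha\bigl(r\pi_\ell^i(s)\bigr)$. Thus the bi-additive extension of \eqref{eq:ore-mult2} in $R^\alpha[x;\sigma,\delta]$ gives
\begin{equation*}
(rx^i)(sx^j)=\sum_{\ell\in\N}\alpha\bigl(r\pi_\ell^i(s)\bigr)x^{j+\ell},
\end{equation*}
matching the left-hand side.

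The main obstacle, as far as I can see, is not computational but conceptual: one must be careful to distinguish the role of $\sigma$ and $\delta$ as additive maps (used to define $\pi_\ell^i$ and hence the shape of the multiplication) from the multiplicative structure they interact with (which differs between $R$ and $R^\alpha$). Once this distinction is made clear, both multiplications collapse to the same formula on monomials and the identification $R[x;\sigma,\delta]^{\widehat\alpha}=R^\alpha[x;\sigma,\delta]$ follows immediately by bi-additivity.
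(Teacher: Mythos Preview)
Your proof is correct. The paper itself does not supply a proof of \autoref{prop:hom*ore} (it is cited from \cite{Bac22}), but its proof of the iterated analogue, \autoref{prop:yau-twisted-Ore-extension}, follows essentially the same monomial-by-monomial strategy you employ: reduce to monomials by bi-additivity, expand the Ore product, and identify both sides with $\sum_\ell \alpha\bigl(r\pi_\ell^i(s)\bigr)x^{j+\ell}$. The only cosmetic difference is the order of operations: the paper first uses multiplicativity of $\widehat\alpha$ to write $\widehat\alpha(rx^i\cdot sx^j)=\alpha(r)x^i\cdot\alpha(s)x^j$, then expands and invokes the commutation $\alpha\circ\pi_\ell^i=\pi_\ell^i\circ\alpha$, whereas you expand first and then apply $\widehat\alpha$ termwise via $\widehat\alpha(tx^k)=\alpha(t)x^k$; both routes land on the same expression.
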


In order to construct hom-associative versions of the higher-order Weyl algebras, we would like to apply the same reasoning as above to iterated differential polynomial rings. 

\begin{lemma}\label{lem:iterated-homogeneous-extension}
Let $R$ be an associative and unital ring with a ring endomorphism $\alpha$ and pairwise commuting derivations $\delta_1,\ldots,\delta_n$. Then $\alpha$ extends uniquely to a ring endomorphism $\widehat\alpha$ on $R[x_1,\ldots,x_n;\delta_1,\ldots,\delta_n]$ such that $\widehat\alpha(x_\ell)=x_\ell$ for $1\leq \ell\leq n$ if and only if $\alpha$ commutes with $\delta_1,\ldots,\delta_n$.
\end{lemma}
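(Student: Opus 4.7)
The plan is to prove both directions by induction on $n$, leveraging \autoref{lem:homogeneous-extension} (the single-variable case, with $\sigma=\id_R$) together with the iterated construction of $R[x_1,\ldots,x_n;\delta_1,\ldots,\delta_n]$ as successive one-variable differential polynomial rings $S_\ell = S_{\ell-1}[x_\ell;\widehat\delta_\ell]$.

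For the necessity direction, if $\widehat\alpha$ is such an extension then applying $\widehat\alpha$ to the defining relation $x_\ell r = rx_\ell + \delta_\ell(r)$ for $r\in R$ and using $\widehat\alpha|_R = \alpha$, $\widehat\alpha(x_\ell)=x_\ell$, and multiplicativity gives $x_\ell\alpha(r) = \alpha(r)x_\ell + \alpha(\delta_\ell(r))$; comparing with $x_\ell\alpha(r) = \alpha(r)x_\ell + \delta_\ell(\alpha(r))$ yields $\alpha\circ\delta_\ell = \delta_\ell\circ\alpha$ for each $\ell$. This part is essentially the same computation as in \autoref{lem:homogeneous-extension}.

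For sufficiency, I would induct on $n$. The base case $n=1$ is \autoref{lem:homogeneous-extension} with $\sigma = \id_R$. For the inductive step, assume the statement holds for $n-1$, so $\alpha$ extends uniquely to a ring endomorphism $\widehat\alpha_{n-1}$ on $S_{n-1} \colonequals R[x_1,\ldots,x_{n-1};\delta_1,\ldots,\delta_{n-1}]$ fixing each $x_j$. To apply \autoref{lem:homogeneous-extension} once more to $S_n = S_{n-1}[x_n;\widehat\delta_n]$, I must verify that $\widehat\alpha_{n-1}$ commutes with $\widehat\delta_n$ on all of $S_{n-1}$.

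The key observation — and the only real step beyond routine bookkeeping — is that the two maps $D_1 \colonequals \widehat\alpha_{n-1}\circ\widehat\delta_n$ and $D_2 \colonequals \widehat\delta_n\circ\widehat\alpha_{n-1}$ are both $\widehat\alpha_{n-1}$-twisted derivations on $S_{n-1}$, i.e.\ for $a,b\in S_{n-1}$ one checks directly that
\[D_k(ab) = D_k(a)\widehat\alpha_{n-1}(b) + \widehat\alpha_{n-1}(a)D_k(b),\quad k=1,2.\]
Moreover $D_1$ and $D_2$ agree on $R$ (by the hypothesis $\alpha\circ\delta_n = \delta_n\circ\alpha$) and on each generator $x_j$, $1\leq j\leq n-1$ (both send $x_j$ to $0$, since $\widehat\delta_n(x_j)=0$ and $\widehat\alpha_{n-1}(x_j)=x_j$). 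Since $S_{n-1}$ is additively generated by products of elements of $R$ and the $x_j$, the common Leibniz-type rule propagates the equality $D_1 = D_2$ to all of $S_{n-1}$ by an easy induction on monomial length. With this established, \autoref{lem:homogeneous-extension} applied to the ring $S_{n-1}$ with endomorphism $\widehat\alpha_{n-1}$ and derivation $\widehat\delta_n$ produces a unique extension of $\widehat\alpha_{n-1}$ to $S_n$ fixing $x_n$, which then, together with the inductive uniqueness of $\widehat\alpha_{n-1}$, gives the desired unique ring endomorphism $\widehat\alpha$ of $S_n$ with $\widehat\alpha(x_\ell)=x_\ell$ for all $\ell$.

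The main obstacle is the commutation check between $\widehat\alpha_{n-1}$ and $\widehat\delta_n$; once the twisted-derivation viewpoint is adopted, the remaining arguments are routine extensions of the single-variable case.
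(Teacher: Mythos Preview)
Your proof is correct and takes a genuinely different route from the paper for the sufficiency direction. The paper defines $\widehat\alpha$ directly on monomials by $\widehat\alpha(rx_1^{i_1}\cdots x_n^{i_n})\colonequals\alpha(r)x_1^{i_1}\cdots x_n^{i_n}$ and then verifies multiplicativity by induction on the total degree $I=i_1+\cdots+i_n$ of the left factor, expanding products via the relation $x_\ell s = sx_\ell + \delta_\ell(s)$; this is a single direct computation that does not invoke \autoref{lem:homogeneous-extension}. Your approach instead inducts on $n$, building $\widehat\alpha$ one variable at a time by repeated application of \autoref{lem:homogeneous-extension}, with the substantive step being the check that $\widehat\alpha_{n-1}$ commutes with $\widehat\delta_n$ via the twisted-derivation trick. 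Your argument is more modular and makes the dependence on the single-variable case explicit, at the cost of the extra commutation verification; the paper's argument is more self-contained and hands-on but essentially reproves the content of \autoref{lem:homogeneous-extension} inside the iterated setting. The necessity direction is handled identically in both.
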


\begin{proof}
Assume that $\alpha$ is a ring endomorphism on $R$ that extends uniquely to a ring endomorphism $\widehat\alpha$ on $S\colonequals R[x_1,\ldots,x_n;\delta_1,\ldots,\delta_n]$ such that $\widehat\alpha(x_\ell)=x_\ell$ for $1\leq \ell\leq n$. Then $\alpha(r)x_\ell+\alpha(\delta_\ell(r))=\widehat\alpha(rx_\ell+\delta_\ell(r))=\widehat\alpha(x_\ell r)=\widehat\alpha(x_\ell)\alpha(r)=x_\ell\alpha(r)=\alpha(r)x_\ell+\delta_\ell(\alpha(r))$ for any $r\in R$ and $1\leq \ell\leq n$. By comparing coefficients, $\alpha$ commutes with $\delta_\ell$.

Now assume that $\alpha$ is a ring endomorphism on $R$ that commutes with $\delta_1,\ldots,\delta_n$. We show that $\alpha$ extends uniquely to a ring endomorphism $\widehat\alpha$ on $S$ such that $\widehat\alpha(x_\ell)=x_\ell$ for $1\leq \ell \leq n$. By necessity, for any $r\in R$ and $i_1,\ldots,i_n\in\N$, $\widehat\alpha(rx_1^{i_1}\cdots x_n^{i_n})\colonequals\alpha(r)\widehat\alpha(x_1)^{i_1}\cdots\alpha(x_n)^{i_n}=\alpha(r)x_1^{i_1}\cdots x_n^{i_n}$ where $\widehat\alpha$ is extended additively to the whole of $S$. We now show that for any $r,s\in R$ and $i_1,\ldots,i_n, j_1,\ldots,j_n\in\N$, the equality $\widehat\alpha\big(\big(rx_1^{i_1}\cdots x_n^{i_n}\big)\big( sx_1^{j_1}\cdots x_n^{j_n}\big)\big)= \widehat\alpha\big(rx_1^{i_1}\cdots x_n^{i_n}\big)\widehat\alpha\big(sx_1^{j_1}\cdots x_n^{j_n}\big)$ holds. To this end, we use induction on the total degree $I\colonequals i_1+\cdots + i_n\in\N$ of the leftmost monomial (we exclude the case $r=0$ since then the equality holds trivially). The base case $I=0$ is immediate since $\widehat\alpha(rsx_1^{j_1}\cdots x_n^{j_n})=\alpha(rs)\widehat\alpha(x_1^{j_1}\cdots x_n^{j_n})=\alpha(r)\alpha(s)\widehat\alpha(x_1^{j_1}\cdots x_n^{j_n})=\alpha(r)\widehat\alpha(sx_1^{j_1}\cdots x_n^{j_n})$. Note that the induction case $I+1$ implies that there is some $\ell$ for which $x_\ell$ has degree $i_\ell+1$. Since $x_1^{i_1},\ldots,x_n^{i_n}$ all commute,
\begin{align*}
 &\widehat\alpha\big(\big(rx_1^{i_1}\cdots x_\ell^{i_\ell+1}\cdots x_n^{i_n}\big)\big(sx_1^{j_1}\cdots x_n^{j_n}\big)\big)=\widehat\alpha\big(rx_1^{i_1}\cdots x_n^{i_n} (x_\ell s)x_1^{j_1}\cdots x_n^{j_n}\big)\\
 &=\widehat\alpha\big(rx_1^{i_1}\cdots x_n^{i_n} (sx_\ell+\delta_\ell(s))x_1^{j_1}\cdots x_n^{j_n}\big)\\
 &=\widehat\alpha\big(\big(rx_1^{i_1}\cdots x_n^{i_n}\big)\big(sx_1^{j_1}\cdots x_\ell^{j_\ell+1}\cdots x_n^{j_n}\big)\big) + \widehat\alpha\big(\big(rx_1^{i_1}\cdots x_n^{i_n}\big)\big(\delta_\ell(s)x_1^{j_1}\cdots x_n^{j_n}\big)\big)\\
 &\stackrel{I}{=}\widehat\alpha\big(rx_1^{i_1}\cdots x_n^{i_n}\big)\widehat\alpha\big(sx_1^{j_1}\cdots x_\ell^{j_\ell+1}\cdots x_n^{j_n}\big) + \widehat\alpha\big(rx_1^{i_1}\cdots x_n^{i_n}\big)\widehat\alpha\big(\delta_\ell(s)x_1^{j_1}\cdots x_n^{j_n}\big)\\
 &=\widehat\alpha\big(rx_1^{i_1}\cdots x_n^{i_n}\big)\alpha(s)x_1^{j_1}\cdots x_\ell^{j_k+1}\cdots x_n^{j_n} + \widehat\alpha\big(rx_1^{i_1}\cdots x_n^{i_n}\big)\alpha(\delta_\ell(s))x_1^{j_1}\cdots x_n^{j_n}\\
 &=\widehat\alpha\big(rx_1^{i_1}\cdots x_n^{i_n}\big)\alpha(s)x_1^{j_1}\cdots x_\ell^{j_\ell+1}\cdots x_n^{j_n} + \widehat\alpha\big(rx_1^{i_1}\cdots x_n^{i_n}\big)\delta_\ell(\alpha(s))x_1^{j_1}\cdots x_n^{j_n}\\
 &=\widehat\alpha\big(rx_1^{i_1}\cdots x_n^{i_n}\big)\alpha(s)x_\ell x_1^{j_1}\cdots x_n^{j_n} + \widehat\alpha\big(rx_1^{i_1}\cdots x_n^{i_n}\big)\delta_k(\alpha(s))x_1^{j_1}\cdots x_n^{j_n}\\
 &=\widehat\alpha\big(rx_1^{i_1}\cdots x_n^{i_n}\big)(\alpha(s)x_\ell+\delta_\ell(\alpha(s)))x_1^{j_1}\cdots x_n^{j_n}=\widehat\alpha\big(rx_1^{i_1}\cdots x_n^{i_n}\big)(x_\ell\alpha(s))x_1^{j_1}\cdots x_n^{j_n}\\
 &=\widehat\alpha\big(rx_1^{i_1}\cdots x_\ell^{i_\ell+1}\cdots x_n^{i_n}\big)\widehat\alpha\big(sx_1^{j_1}\cdots x_n^{j_n}\big)
\end{align*}

That $\widehat\alpha$ respects the identity element follows from $\alpha$ respecting the identity element.
\end{proof}

\begin{proposition}\label{prop:yau-twisted-Ore-extension}
Let $R$ be an associative and unital ring with a ring endomorphism $\alpha$ and pairwise commuting derivations $\delta_1,\ldots,\delta_n$. If $\alpha$ commutes with $\delta_1,\ldots,\delta_n$ and is extended homogeneously to a ring endomorphism $\widehat\alpha$ on $R[x_1,\ldots,x_n;\delta_1,\ldots,\delta_n]$, then $R[x_1,\ldots,x_n;\delta_1,\ldots,\delta_n]^{\widehat\alpha}=R^\alpha[x_1,\ldots,x_n;\delta_1,\ldots,\delta_n]$.
\end{proposition}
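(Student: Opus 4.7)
The plan is to induct on the number $n$ of indeterminates, reducing the iterated case to the single-variable version already supplied by \autoref{prop:hom*ore}. For the base case $n=1$, specializing \autoref{prop:hom*ore} to $\sigma=\id_R$ gives $R[x_1;\delta_1]^{\widehat\alpha}=R^\alpha[x_1;\delta_1]$ immediately.

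For the inductive step, abbreviate $T_{n-1}\colonequals R[x_1,\ldots,x_{n-1};\delta_1,\ldots,\delta_{n-1}]$, so that $T_n\colonequals R[x_1,\ldots,x_n;\delta_1,\ldots,\delta_n]=T_{n-1}[x_n;\widehat\delta_n]$ by the very construction of the iterated differential polynomial ring, where $\widehat\delta_n$ is the unique derivation on $T_{n-1}$ extending $\delta_n$ and annihilating each $x_j$ for $1\le j\le n-1$. Let $\widehat\alpha_{n-1}$ denote the homogeneous extension of $\alpha$ to $T_{n-1}$ furnished by the $(n-1)$-variable instance of \autoref{lem:iterated-homogeneous-extension}. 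First, I would observe that $\widehat\alpha$ coincides with the homogeneous extension of $\widehat\alpha_{n-1}$ along $x_n$: both are ring endomorphisms of $T_n=T_{n-1}[x_n;\widehat\delta_n]$ that restrict to $\alpha$ on $R$ and fix each $x_\ell$, and such an extension is unique by \autoref{lem:homogeneous-extension}. Next, I would verify that $\widehat\alpha_{n-1}$ commutes with $\widehat\delta_n$ on $T_{n-1}$; on a basis monomial $rx_1^{i_1}\cdots x_{n-1}^{i_{n-1}}$, both maps act purely through the coefficient $r$, as $\alpha$ and $\delta_n$ respectively (since $\widehat\alpha_{n-1}$ fixes each $x_j$ and $\widehat\delta_n$ annihilates them), so the desired commutation reduces to the hypothesis $\alpha\circ\delta_n=\delta_n\circ\alpha$.

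With these preliminaries in place, \autoref{prop:hom*ore} applied to the Ore extension $T_{n-1}[x_n;\widehat\delta_n]$ with endomorphism $\widehat\alpha_{n-1}$ yields
\[
T_n^{\widehat\alpha}=T_{n-1}[x_n;\widehat\delta_n]^{\widehat\alpha}=T_{n-1}^{\widehat\alpha_{n-1}}[x_n;\widehat\delta_n].
\]
Rewriting $T_{n-1}^{\widehat\alpha_{n-1}}$ via the inductive hypothesis as $R^\alpha[x_1,\ldots,x_{n-1};\delta_1,\ldots,\delta_{n-1}]$ and invoking the definition of the iterated nonassociative differential polynomial ring over $R^\alpha$ identifies the right-hand side with $R^\alpha[x_1,\ldots,x_n;\delta_1,\ldots,\delta_n]$, completing the induction.

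I do not anticipate a serious obstacle: the argument is essentially bookkeeping once \autoref{prop:hom*ore} is in hand, and the only mildly delicate point is the identification of $\widehat\alpha$ with the homogeneous extension of $\widehat\alpha_{n-1}$ along $x_n$, which is disposed of by the uniqueness clause of \autoref{lem:homogeneous-extension}. As an alternative one could bypass the induction entirely and compute both products of two basis monomials $rx_1^{i_1}\cdots x_n^{i_n}$ and $sx_1^{j_1}\cdots x_n^{j_n}$ directly via the multiplication rule \eqref{eq:multi-multiplication}, using that $\alpha$ is a ring homomorphism commuting with each $\delta_i$ to pull $\alpha$ through products and through the compositions $\delta_1^{i_1-\ell_1}\circ\cdots\circ\delta_n^{i_n-\ell_n}$; the two sides then coincide term by term. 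I prefer the inductive route since it reuses \autoref{prop:hom*ore} rather than duplicating its combinatorial content.
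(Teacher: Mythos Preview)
Your inductive argument is correct, but the paper takes the other route you sketched at the end: it verifies directly that the two multiplications agree on basis monomials via the formula \eqref{eq:multi-multiplication}. Concretely, the paper expands $\big(rx_1^{i_1}\cdots x_n^{i_n}\big)*\big(sx_1^{j_1}\cdots x_n^{j_n}\big)=\widehat\alpha\big(\big(rx_1^{i_1}\cdots x_n^{i_n}\big)\big(sx_1^{j_1}\cdots x_n^{j_n}\big)\big)$, rewrites it as the product $\alpha(r)x_1^{i_1}\cdots x_n^{i_n}\,\alpha(s)x_1^{j_1}\cdots x_n^{j_n}$ in $R[x_1,\ldots,x_n;\delta_1,\ldots,\delta_n]$, applies \eqref{eq:multi-multiplication}, and then pulls $\alpha$ across the composites $\delta_1^{i_1-\ell_1}\circ\cdots\circ\delta_n^{i_n-\ell_n}$ to recognize the result as the product in $R^\alpha[x_1,\ldots,x_n;\delta_1,\ldots,\delta_n]$.

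Your induction has the virtue of reusing \autoref{prop:hom*ore} cleanly and making the recursive structure of the iterated extension do the work; the only cost is the small bookkeeping you flagged, namely identifying $\widehat\alpha$ with the homogeneous extension of $\widehat\alpha_{n-1}$ along $x_n$ and checking $\widehat\alpha_{n-1}\circ\widehat\delta_n=\widehat\delta_n\circ\widehat\alpha_{n-1}$, plus the (routine) identification of $\big(R^\alpha[x_1,\ldots,x_{n-1};\delta_1,\ldots,\delta_{n-1}]\big)[x_n;\widehat\delta_n]$ with $R^\alpha[x_1,\ldots,x_n;\delta_1,\ldots,\delta_n]$. The paper's direct computation is shorter and self-contained, at the price of essentially repeating in $n$ variables the combinatorics already carried out for one variable in \autoref{prop:hom*ore}.
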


\begin{proof}
As additive groups, $R[x_1,\ldots,x_n;\delta_1,\ldots,\delta_n]^{\widehat\alpha}=R^\alpha[x_1,\ldots,x_n;\delta_1,\ldots,\delta_n]$, so we only need to show that the two also have the same multiplication. To this end, it suffices to show that $\big(rx_1^{i_1}\cdots x_n^{i_n}\big) * \big(sx_1^{j_1}\cdots x_n^{j_n}\big)=\big(rx_1^{i_1}\cdots x_n^{i_n}\big) \big(sx_1^{j_1}\cdots x_n^{j_n}\big)$ for any $r,s\in R$ and $i_1,\ldots,i_n, j_1,\ldots,j_n\in\N$. By using that $\alpha$ commutes with $\delta_1,\ldots,\delta_n$,
\begin{align*}
&\big(rx_1^{i_1}\cdots x_n^{i_n}\big) * \big(sx_1^{j_1}\cdots x_n^{j_n}\big)=\widehat\alpha\big(rx_1^{i_1}\cdots x_n^{i_n} sx_1^{j_1}\cdots x_n^{j_n}\big)\\
&=\alpha(r)x_1^{i_1}\cdots x_n^{i_n}\alpha(s)x_1^{j_1}\cdots x_n^{j_n}\\
&\stackrel{\eqref{eq:multi-multiplication}}{=}\sum_{\ell_1=0}^{i_1}\cdots\sum_{\ell_n=0}^{i_n}\binom{i_1}{\ell_1}\cdots\binom{i_n}{\ell_n}\alpha(r)\delta_1^{i_1-\ell_1}\circ \cdots \circ\delta_n^{i_n-\ell_n}(\alpha(s))x_1^{j_1+\ell_1}\cdots x_n^{j_n+\ell_n}\\
&=\sum_{\ell_1=0}^{i_1}\cdots\sum_{\ell_n=0}^{i_n}\binom{i_1}{\ell_1}\cdots\binom{i_n}{\ell_n}\alpha(r)\alpha\big(\delta_1^{i_1-\ell_1}\circ \cdots \circ\delta_n^{i_n-\ell_n}(s)\big)x_1^{j_1+\ell_1}\cdots x_n^{j_n+\ell_n}\\
&=\sum_{\ell_1=0}^{i_1}\cdots\sum_{\ell_n=0}^{i_n}\binom{i_1}{\ell_1}\cdots\binom{i_n}{\ell_n}\big(r*\delta_1^{i_1-\ell_1}\circ \cdots \circ\delta_n^{i_n-\ell_n}(s)\big)x_1^{j_1+\ell_1}\cdots x_n^{j_n+\ell_n}\\
&=\big(rx_1^{i_1}\cdots x_n^{i_n}\big) \big(sx_1^{j_1}\cdots x_n^{j_n}\big).\qedhere
\end{align*}
\end{proof}

\section{Families of higher-order hom-associative Weyl algebras}\label{sec:hom-Weyl}
In this section, we define, with the help of the previous section, hom-associative analogues of the higher-order Weyl algebras and investigate what basic properties they have. We begin by proving the following lemma:

\begin{lemma}\label{lem:unique-commuting-endomorphism}
Up to a constant $k\colonequals (k_1,\dots,k_n)\in K^n$, there is a unique $K$-algebra endomorphism $\alpha_{k}$ on $K[y_1,\ldots,y_n]$ that commutes with $\partial/\partial y_1,\ldots \partial/\partial y_n$, defined by $\alpha_k (y_\ell)=y_\ell+k_\ell$ for $1\leq \ell\leq n$.
\end{lemma}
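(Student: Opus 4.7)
The plan is to handle the two directions separately. First, for each $k=(k_1,\ldots,k_n)\in K^n$, I would show that $\alpha_k$ as prescribed really is a $K$-algebra endomorphism commuting with every $\partial/\partial y_j$. Second, I would show that every $K$-algebra endomorphism of $R:=K[y_1,\ldots,y_n]$ commuting with each $\partial/\partial y_j$ arises in this way, so that the family $\{\alpha_k\}_{k\in K^n}$ is exhaustive.

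For existence, since $R$ is the free commutative $K$-algebra on $y_1,\ldots,y_n$, the assignment $y_\ell\mapsto y_\ell+k_\ell$ extends uniquely to a $K$-algebra endomorphism $\alpha_k$. To verify the commutation relation $\alpha_k\circ\partial/\partial y_j=\partial/\partial y_j\circ\alpha_k$ on all of $R$, I would first check agreement on the generators, where both sides return $\delta_{j\ell}$ on $y_\ell$, and then propagate to an arbitrary monomial $y_{\ell_1}\cdots y_{\ell_m}$ by expanding $\partial/\partial y_j$ via the Leibniz rule and using the multiplicativity of $\alpha_k$.

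For uniqueness, let $\alpha$ be any $K$-algebra endomorphism of $R$ commuting with each $\partial/\partial y_j$, and put $p_\ell:=\alpha(y_\ell)\in R$. The commutation identity applied to $y_\ell$ yields $\alpha(\delta_{j\ell})=\partial p_\ell/\partial y_j$, so $\partial p_\ell/\partial y_j=\delta_{j\ell}$ for every $j$. The vanishing of $\partial p_\ell/\partial y_j$ for $j\neq\ell$ forces $p_\ell$ to depend on $y_\ell$ alone, say $p_\ell=\sum_{m\geq0}a_my_\ell^m$, and then $\partial p_\ell/\partial y_\ell=1$ gives $a_1=1$ and $ma_m=0$ for all $m\geq2$. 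Setting $k_\ell:=a_0\in K$ then identifies $\alpha$ with $\alpha_k$ for $k:=(k_1,\ldots,k_n)$.

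The only step that genuinely goes beyond formal manipulation is the reduction $ma_m=0\Rightarrow a_m=0$ for $m\geq2$, and this is the main obstacle in the sense that it is precisely where the standing hypothesis $\operatorname{char}K=0$ enters. In positive characteristic, polynomials like $y_\ell+y_\ell^p$ would produce extra commuting endomorphisms, so the uniqueness clause would fail without the characteristic-zero assumption.
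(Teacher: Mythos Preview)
Your proposal is correct and follows essentially the same route as the paper: both directions proceed by computing $(\partial/\partial y_j)\alpha(y_\ell)=\delta_{j\ell}$ and comparing coefficients for uniqueness, and by a direct Leibniz-rule computation on monomials for existence. One small remark: your claim that the \emph{only} place characteristic zero enters is the implication $ma_m=0\Rightarrow a_m=0$ is slightly understated, since the preceding step ``$\partial p_\ell/\partial y_j=0$ for $j\neq\ell$ forces $p_\ell$ to depend on $y_\ell$ alone'' already uses the same fact; this does not affect the validity of the argument.
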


\begin{proof}
Assume that $\alpha$ is a $K$-algebra endomorphism on $K[y_1,\ldots,y_n]$ that commutes with $\partial/\partial y_1,\ldots,\partial/\partial y_n$. Then $(\partial/\partial y_j)\alpha(y_\ell)=\alpha((\partial/\partial y_j)y_\ell)=\alpha(\delta_{j\ell})=\delta_{j\ell}$. Now, as a $K$-vector space, $K[y_1,\ldots,y_n]$ has a basis consisting of all monomials $y_1^{i_1}\cdots y_n^{i_n}$, $i_1,\ldots,i_n\in\N$. Hence, if we let $\alpha(y_\ell)=\sum_{i_1,\ldots,i_n\in\N} k'_{i_1,\ldots,i_n;\ell}y_1^{i_1}\cdots y_n^{i_n}$ for some $k'_{i_1,\ldots,i_n;\ell}\in K$ and set $0y_1^{i_1}\cdots y_j^{-1}\cdots y_n^{i_n}\colonequals 0$, then $(\partial/\partial y_j)\alpha(y_\ell)=\sum_{i_1,\ldots,i_n\in\N} i_jk'_{i_1,\ldots,i_n;\ell}y_1^{i_1}\cdots y_j^{i_j-1}\cdots y_n^{i_n}$. By comparing coefficients, we see that $(\partial/\partial y_j)\alpha(y_\ell)=\delta_{j\ell}$ if and only if $k'_{i_1,\ldots,i_n;\ell}$ is zero unless $i_1=\cdots =i_\ell-1=\cdots=i_n=0$, in which case $k'_{0,\ldots,1,\ldots,0;\ell}=1$, or $i_1=\cdots=i_n=0$. We conclude that $\alpha(y_\ell)=y_\ell+k_\ell$ for some $k_\ell\colonequals k'_{0,\ldots,0;\ell}\in K$, which defines $\alpha$ uniquely as a $K$-algebra endomorphism. To emphasize its dependence on $k\colonequals (k_1,\ldots,k_n)$, we denote $\alpha$ by $\alpha_k$.

Now assume that $\alpha_k$ is a $K$-algebra endomorphism defined by $\alpha_k(y_\ell)=y_\ell+k_\ell$ for some $k_\ell\in K$ and $1\leq \ell\leq n$. We claim that $\alpha_k$ commutes with $\partial/\partial y_1,\ldots \partial/\partial y_n$. To prove our claim, it suffices to show that $\alpha_k\big(y_1^{i_1}\cdots y_n^{i_n}\big)$ commutes with $\partial/\partial y_\ell$ for $i_1,\ldots,i_n\in\N$ and $1\leq \ell\leq n$. To this end,
\begin{align*}
 &(\partial/\partial y_\ell)\alpha_k(y_1^{i_1}\cdots y_n^{i_n})=(\partial/\partial y_\ell)\big(\alpha_k(y_1)^{i_1}\cdots\alpha_k(y_n)^{i_n}\big)\\&=(\partial/\partial y_\ell)\big((y_1+k_1)^{i_1}\cdots(y_n+k_n)^{i_n}\big)\\
 &=i_\ell(y_1+k_1)^{i_1}\cdots(y_\ell+k_\ell)^{i_\ell-1}\cdots(y_n+k_n)^{i_n}\\
&=i_\ell\alpha_k(y_1)^{i_1}\cdots\alpha_k(y_\ell)^{i_\ell-1}\cdots\alpha_k(y_n)^{i_n}\\&=\alpha_k\big(i_\ell y_1^{i_1}\cdots y_\ell^{i_\ell-1}\cdots y_n\big)=\alpha_k\big((\partial/\partial y_\ell)(y_1^{i_1}\cdots y_n^{i_n})\big).\qedhere
\end{align*}
\end{proof}

By using \autoref{lem:iterated-homogeneous-extension} and \autoref{lem:unique-commuting-endomorphism} together with \autoref{prop:yau-twisted-Ore-extension} we may now, in a unique way, define a hom-associative $n$-Weyl algebra as the hom-associative iterated differential polynomial ring $K[y_1\ldots,y_n][x_1,\ldots,x_n;\partial/\partial y_1,\ldots,\partial/\partial y_n]^{\widehat{\alpha}_k}=K[y_1\ldots,y_n]^{\alpha_k}[x_1,\ldots,x_n;\partial/\partial y_1,\ldots,\partial/\partial y_n]$; that is, as $A_n^{\widehat\alpha_k}$. To ease the notation, we write $A_n^k$ for $A_n^{\widehat\alpha_k}$. Moreover, for each $n\in\N_{>0}$, we may also view $k$ as a map $\{\ell\in\N_{>0}\mid \ell\leq n\}\to K$, $\ell\mapsto k_\ell$. In particular, $k$ depends on $n$, something we have deliberately omitted in our notation to avoid a cluttered ditto. With these notations, we thus have the following definition:

\begin{definition}[The hom-associative $n$-Weyl algebra]\label{def:hom-weyl}
The \emph{hom-associative $n$-Weyl algebra} is the hom-associative $K$-algebra $A_n^{k}$ where $k\in K^n$.
\end{definition}

Note that there are two families $(A_n^k)_{k\in K^n}$ and $(A_n^k)_{n\in\N_{>0}}$ of hom-associative Weyl algebras where $A_n^0=A_n$; here $0$ denotes the $n$-tuple $(0,\ldots,0)$. By the next result, the members of the latter family are naturally ordered by inclusion, so it is also natural to call $A_n^k$ the \emph{$n$th hom-associative Weyl algebra}.

\begin{remark}
The definition of the $n$th hom-associative Weyl algebra above coincides with that introduced in \cite{BR24}.
\end{remark}

\begin{lemma}
There is a chain $A_1^{k}\subsetneq\cdots\subsetneq A_n^{k}$ of hom-associative Weyl algebras.
\end{lemma}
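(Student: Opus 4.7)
The plan is to prove the chain by establishing the single inclusion $A_\ell^k \subsetneq A_{\ell+1}^k$ for each $1 \le \ell < n$ and then iterating. The underlying set of $A_\ell^k$ equals $A_\ell$, and the associative Weyl algebras famously satisfy $A_1 \subsetneq \cdots \subsetneq A_n$, with each $A_\ell$ realized as the associative $K$-subalgebra of $A_n$ generated by $y_1,\ldots,y_\ell,x_1,\ldots,x_\ell$. The strictness of $A_\ell\subsetneq A_{\ell+1}$ is immediate since, e.g., $x_{\ell+1}\in A_{\ell+1}\setminus A_\ell$.

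Next, I would verify that the twisting map behaves well under restriction. By construction $\widehat\alpha_k$ acts on $A_n$ by $\widehat\alpha_k(y_j)=y_j+k_j$ and $\widehat\alpha_k(x_j)=x_j$ for $1\le j\le n$. In particular, $\widehat\alpha_k$ maps the generators of $A_\ell\subseteq A_n$ into $A_\ell$, so $\widehat\alpha_k(A_\ell)\subseteq A_\ell$, and the restriction $\widehat\alpha_k\vert_{A_\ell}$ is a $K$-algebra endomorphism of $A_\ell$ that extends $\alpha_{(k_1,\ldots,k_\ell)}$ homogeneously. By the uniqueness clause of \autoref{lem:iterated-homogeneous-extension}, this restriction coincides with the twisting map used to define $A_\ell^k$.

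Finally, the Yau-twisted multiplication $a*b\colonequals\widehat\alpha_k(ab)$ on $A_{\ell+1}^k$ restricts correctly: if $a,b\in A_\ell$, then $ab\in A_\ell$ in the associative product of $A_{\ell+1}$, and by the previous paragraph $\widehat\alpha_k(ab)\in A_\ell$ with the same value as the product computed in $A_\ell^k$. Thus $A_\ell^k$ sits inside $A_{\ell+1}^k$ as a hom-associative subalgebra (with matching twisting maps), and the inclusion is strict because it is already strict at the level of underlying sets. Chaining these inclusions gives $A_1^k\subsetneq\cdots\subsetneq A_n^k$. I do not foresee any real obstacle here; the only point requiring a small argument is the compatibility $\widehat\alpha_k\vert_{A_\ell}=\widehat\alpha_{(k_1,\ldots,k_\ell)}$, which is taken care of by the uniqueness in \autoref{lem:iterated-homogeneous-extension}.
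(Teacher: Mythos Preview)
Your proposal is correct and follows essentially the same approach as the paper: use the known associative chain $A_1\subsetneq\cdots\subsetneq A_n$, check that the twisting map on $A_{\ell}^k$ restricts to the twisting map on $A_{\ell-1}^k$, and conclude that the Yau-twisted multiplication restricts accordingly. Your argument is slightly more explicit than the paper's (you invoke the uniqueness clause of \autoref{lem:iterated-homogeneous-extension} to pin down the restriction, whereas the paper simply asserts $\alpha_{(k_1,\ldots,k_\ell)}\vert_{A_{\ell-1}}=\alpha_{(k_1,\ldots,k_{\ell-1})}$), but the substance is the same.
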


\begin{proof}
Since we have a chain $A_1\subsetneq\cdots\subsetneq A_n$ of Weyl algebras and $\alpha_{(k_1,\ldots,k_\ell)}\vert A_{\ell-1} = \alpha_{(k_1,\ldots,k_{\ell-1})}$ for $2\leq \ell\leq n$, we see that the multiplication on $A_{\ell-1}^k$ is precisely the multiplication on $A_\ell^k$ restricted to the underlying $K$-vector space of $A_{\ell-1}$. Hence we have a chain $A_1^{k}\subsetneq\cdots\subsetneq A_n^{k}$ of hom-associative Weyl algebras.
\end{proof}

Since $k_1\frac{\partial}{\partial y_1},\ldots,k_n\frac{\partial}{\partial y_n}$ are all locally nilpotent $K$-linear maps, we may define $e^{k_1\frac{\partial}{\partial y_1}},\ldots,e^{k_n\frac{\partial}{\partial y_n}}$ as their formal power series (see \autoref{subsec:non-assoc}). If we introduce the symbol $k\frac{\partial}{\partial y
}\colonequals \sum_{\ell=1}^nk_\ell\frac{\partial}{\partial y_\ell}$, then by \autoref{prop:nilpotent-maps}, $e^{k_1\frac{\partial}{\partial y_1}}\cdots e^{k_n\frac{\partial}{\partial y_n}}=e^{k\frac{\partial}{\partial y}}$. If we also set $ik\colonequals (ik_1,\ldots,ik_n)$ for any $i\in\mathbb{Z}$, then we have the following lemma:

\begin{lemma}\label{lem:alpha-properties}
The following assertions hold:
\begin{enumerate}[label=\upshape(\roman*)]
\item $\widehat\alpha_k=e^{k\frac{\partial}{\partial y}}\in \Aut_K(A_n)$ with $\widehat\alpha_k^{-1}=e^{-k\frac{\partial}{\partial y}}$.\label{it:alpha1}
\item $\widehat\alpha_k^{i}=\widehat\alpha_{ik}$ for any $i\in\mathbb{Z}$.\label{it:alpha2}
\item $p*q=e^{k\frac{\partial}{\partial y}}(pq)$ for any $p,q \in A_n^k$.\label{it:alpha3}
\item $pq=e^{-k\frac{\partial}{\partial y}}(p*q)$  for any $p,q \in A_n^k$.\label{it:alpha4}
\end{enumerate}
\end{lemma}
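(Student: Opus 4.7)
The plan is to establish (i) first (the most substantial part), then read off (ii), (iii), and (iv) as short corollaries.

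For (i), I would begin by checking that the operator $k\tfrac{\partial}{\partial y} = \sum_{\ell=1}^n k_\ell\tfrac{\partial}{\partial y_\ell}$ is a locally nilpotent $K$-linear derivation on $A_n$. Each summand $k_\ell\tfrac{\partial}{\partial y_\ell} = k_\ell\ad_{x_\ell}$ is a derivation and is locally nilpotent because an arbitrary element of $A_n$ is a polynomial of finite total degree in the $y_j$'s. The operators $\tfrac{\partial}{\partial y_\ell}$ pairwise commute (this is immediate from $[x_i,x_j]=0$ using $\tfrac{\partial}{\partial y_\ell}=\ad_{x_\ell}$ and the Jacobi identity), so \autoref{prop:nilpotent-maps} together with the observation from its proof that a sum of pairwise commuting locally nilpotent maps is locally nilpotent shows that $k\tfrac{\partial}{\partial y}$ is locally nilpotent, so $e^{k\frac{\partial}{\partial y}}$ is well defined on $A_n$.

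Next I would observe that $e^{k\frac{\partial}{\partial y}}$ is a $K$-algebra endomorphism of $A_n$: this is the usual computation
\[
e^{k\frac{\partial}{\partial y}}(pq)=\sum_{i\in\N}\frac{1}{i!}\sum_{j=0}^{i}\binom{i}{j}\Bigl(k\tfrac{\partial}{\partial y}\Bigr)^{j}(p)\Bigl(k\tfrac{\partial}{\partial y}\Bigr)^{i-j}(q)=e^{k\frac{\partial}{\partial y}}(p)\,e^{k\frac{\partial}{\partial y}}(q),
\]
valid because $k\tfrac{\partial}{\partial y}$ is a derivation; and it fixes $1_{A_n}$. Evaluating on generators gives $e^{k\frac{\partial}{\partial y}}(x_\ell)=x_\ell$ (since $\tfrac{\partial}{\partial y_j}(x_\ell)=0$) and $e^{k\frac{\partial}{\partial y}}(y_\ell)=y_\ell+k_\ell$ (all higher derivatives of $y_\ell$ vanish). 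By the uniqueness clause of \autoref{lem:iterated-homogeneous-extension} applied to the underlying associative Weyl algebra $A_n=K[y_1,\ldots,y_n][x_1,\ldots,x_n;\partial/\partial y_1,\ldots,\partial/\partial y_n]$, this forces $\widehat\alpha_k=e^{k\frac{\partial}{\partial y}}$. Invertibility then comes from \autoref{prop:nilpotent-maps}: $e^{k\frac{\partial}{\partial y}}\circ e^{-k\frac{\partial}{\partial y}} = e^{0} = \id_{A_n}$, and symmetrically, so $\widehat\alpha_k\in\Aut_K(A_n)$ with inverse $e^{-k\frac{\partial}{\partial y}}$.

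For (ii), an iterated application of \autoref{prop:nilpotent-maps} (to $i$ copies of $k\tfrac{\partial}{\partial y}$, which trivially pairwise commute) gives $\widehat\alpha_k^i=(e^{k\frac{\partial}{\partial y}})^i=e^{ik\frac{\partial}{\partial y}}=\widehat\alpha_{ik}$ for $i\in\N$; the identification $e^{ik\frac{\partial}{\partial y}}=\widehat\alpha_{ik}$ uses part (i) with $k$ replaced by $ik$. For negative $i$ we combine this with $\widehat\alpha_k^{-1}=\widehat\alpha_{-k}$ from (i). Finally, (iii) is just the definition of Yau's twisted multiplication, $p*q\colonequals\widehat\alpha_k(pq)$, combined with the identification in (i); and (iv) is obtained by applying $\widehat\alpha_k^{-1}=e^{-k\frac{\partial}{\partial y}}$ to both sides of (iii).

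The only real content is the identification $\widehat\alpha_k = e^{k\frac{\partial}{\partial y}}$ in (i); the potential pitfall is purely a bookkeeping one, namely ensuring that $k\tfrac{\partial}{\partial y}$ really is a locally nilpotent derivation on the full algebra $A_n$ (rather than just on the polynomial subring $K[y_1,\ldots,y_n]$), so that the exponential series genuinely defines a ring endomorphism and \autoref{prop:nilpotent-maps} applies.
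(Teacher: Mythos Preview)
Your proof is correct. The only substantive difference from the paper's argument is in part~(i). The paper establishes $\widehat\alpha_k=e^{k\frac{\partial}{\partial y}}$ by a direct computation on basis monomials: it expands $\widehat\alpha_k\big(y_1^{i_1}\cdots y_n^{i_n}x_1^{j_1}\cdots x_n^{j_n}\big)=(y_1+k_1)^{i_1}\cdots(y_n+k_n)^{i_n}x_1^{j_1}\cdots x_n^{j_n}$ using the binomial theorem and matches the result term by term against $e^{k_1\frac{\partial}{\partial y_1}}\cdots e^{k_n\frac{\partial}{\partial y_n}}$ acting on the same monomial. You instead argue structurally: you show that $e^{k\frac{\partial}{\partial y}}$ is a $K$-algebra endomorphism (because it is the exponential of a locally nilpotent derivation), check that it restricts to $\alpha_k$ on $K[y_1,\ldots,y_n]$ and fixes each $x_\ell$, and then invoke the uniqueness clause of \autoref{lem:iterated-homogeneous-extension}. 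Your route is cleaner and explains \emph{why} the two maps agree, at the cost of importing the (standard) fact that the exponential of a locally nilpotent derivation is multiplicative; the paper's route is a self-contained calculation that avoids quoting that fact. For invertibility the paper simply composes $\widehat\alpha_{-k}$ with $\widehat\alpha_k$ and reads off $\widehat\alpha_0=\id_{A_n}$ on generators, while you use \autoref{prop:nilpotent-maps} to get $e^{k\frac{\partial}{\partial y}}\circ e^{-k\frac{\partial}{\partial y}}=e^0=\id_{A_n}$; these are equivalent once (i) is in hand. Parts (ii)--(iv) are handled the same way in both arguments.
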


\begin{proof}
\ref{it:alpha1}: To prove that $\widehat\alpha_k=e^{k\frac{\partial}{\partial y}}$, it suffices to show $\widehat\alpha_k\big(y_1^{i_1}\cdots y_n^{i_n}x_1^{j_1}\cdots x_n^{j_n}\big)=e^{k\frac{\partial}{\partial y}}\big(y_1^{i_1}\cdots y_n^{i_n}x_1^{j_1}\cdots x_n^{j_n}\big)$ for any $i_1,\ldots i_n, j_1,\ldots,j_n\in\N$. To this end, by using the binomial theorem and \autoref{prop:nilpotent-maps},
\begin{align*}
&\widehat\alpha_k\big(y_1^{i_1}\cdots y_n^{i_n}x_1^{j_1}\cdots x_n^{j_n}\big)=(y_1+k_1)^{i_1}\cdots (y_n+k_n)^{i_n}x_1^{j_1}\cdots x_n^{j_n}\\
&=\sum_{\ell_1=0}^{i_1}\binom{i_1}{\ell_1}k_1^{\ell_1}y_1^{i_1-\ell_1}\cdots \sum_{\ell_n=0}^{i_n}\binom{i_n}{\ell_n}k_n^{\ell_n}y_n^{i_n-\ell_n}x_1^{j_1}\cdots x_n^{j_n}\\
&=\sum_{\ell_1=0}^{i_1}\frac{(k_1(\partial/\partial y_1))^{\ell_1}}{\ell_1!}y_1^{i_1}\cdots\sum_{\ell_n=0}^{i_n}\frac{(k_n(\partial/\partial y_n))^{\ell_n}}{\ell_n!}y_n^{i_n}\big(x_1^{j_1}\cdots x_n^{j_n}\big)\\
&=\sum_{\ell_1=0}^{i_1}\frac{(k_1(\partial/\partial y_1))^{\ell_1}}{\ell_1!}\cdots\sum_{\ell_n=0}^{i_n}\frac{(k_n(\partial/\partial y_n))^{\ell_n}}{\ell_n!}\big(y_1^{i_1}\cdots y_n^{i_n}x_1^{j_1}\cdots x_n^{j_n}\big)\\
&= e^{k_1\frac{\partial}{\partial y_1}}\cdots e^{k_n\frac{\partial}{\partial y_n}}\big(y_1^{i_1}\cdots y_n^{i_n}x_1^{j_1}\cdots x_n^{j_n}\big)= e^{k\frac{\partial}{\partial y}}\big(y_1^{i_1}\cdots y_n^{i_n}x_1^{j_1}\cdots x_n^{j_n}\big).
\end{align*}
Hence $\widehat\alpha_k=e^{k\frac{\partial}{\partial y}}$. Now, we know that $\widehat\alpha_k$ is a $K$-algebra endomorphism on $A_n$. By the definition of $\widehat\alpha_k$, we have $\widehat\alpha_{-k}\circ\widehat\alpha_k=\widehat\alpha_k\circ\widehat\alpha_{-k}=\widehat\alpha_{k-k}=\widehat\alpha_0=\id_{A_n}$, so $\widehat\alpha_{-k}$ is both a left inverse and a right inverse to $\widehat\alpha_k$ which is therefore a bijection. We conclude that $\widehat\alpha_k\in \Aut_K(A_n)$.\\

\noindent\ref{it:alpha2}: This follows immediately from $\widehat\alpha_k=e^{k\frac{\partial}{\partial y}}$ and $\widehat\alpha_k^{-1}=e^{-k\frac{\partial}{\partial y}}$ in \ref{it:alpha1}.\\

\noindent\ref{it:alpha3}: By \ref{it:alpha1}, $\widehat\alpha_k=e^{k\frac{\partial}{\partial y}}$, so $p*q=e^{k\frac{\partial}{\partial y}}(pq)$ for any $p,q\in A_n^k$ by definition.\\

\noindent\ref{it:alpha4}: By \ref{it:alpha1}, $\widehat\alpha_k=e^{k\frac{\partial}{\partial y}}$ and $\widehat\alpha_k^{-1}=e^{-k\frac{\partial}{\partial y}}$. Hence the result follows from \ref{it:alpha3}.
\end{proof}

\begin{theorem}\label{thm:hom-Weyl-properties}
The following assertions hold:
    \begin{enumerate}[label=\upshape(\roman*)]
    \item $K$ embeds as a subfield into $A_n^k$.\label{it:subfield}
    \item $1_{A_n}$ is a unique weak identity element of $A_n^k$.\label{it:unique-weak-identity}
    \item $A_n^k$ contains no zero divisors.\label{it:no-zero-divisors}
    \item $A_n^k$ is power-associative if and only if $k=0$.\label{it:power-assoc}
    \end{enumerate}
\end{theorem}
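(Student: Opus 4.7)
My plan is to handle (i)--(iii) by direct appeal to results already in the excerpt, and to reserve the real work for (iv). For (i), since $\widehat\alpha_k$ is a $K$-algebra endomorphism it fixes $K \subseteq A_n$ pointwise, so $k_1 * k_2 = \widehat\alpha_k(k_1 k_2) = k_1 k_2$ for $k_1, k_2 \in K$, and $K$ persists as a subfield under the twisted multiplication. For (ii) and (iii), \autoref{lem:alpha-properties}\ref{it:alpha1} gives $\widehat\alpha_k \in \Aut_K(A_n)$, hence in particular $\widehat\alpha_k$ is injective: part (ii) is then \autoref{lem:unique-weak-identity} applied to $A_n^{\widehat\alpha_k}$, and part (iii) is \autoref{lem:zero-divisors} applied to the same, combined with the fact (recorded in \autoref{subsec:iterated}) that $A_n$ has no nonzero zero divisors.

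The substance lies in (iv). The ``if'' direction is immediate because $A_n^0 = A_n$ is associative. For the converse, I would first recast the associator of $A_n^k$ in terms of the commutator of $A_n$: unwinding $x * y = \widehat\alpha_k(xy)$ twice yields
\[
(a,a,a) \;=\; \widehat\alpha_k\bigl([\widehat\alpha_k(a^2),\, a]\bigr)
\]
for every $a\in A_n^k$, with the bracket on the right the $A_n$-commutator. Because $\widehat\alpha_k$ is bijective, power associativity of $A_n^k$ is equivalent to $\widehat\alpha_k(a^2)$ commuting with $a$ in $A_n$ for every $a$; so, assuming $k \neq 0$, it suffices to exhibit one witness violating this condition. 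Given $k_{\ell_0} \neq 0$, my intended witness is $a\colonequals y_{\ell_0} x_{\ell_0}$. Using $x_{\ell_0} y_{\ell_0} - y_{\ell_0} x_{\ell_0} = 1$ one obtains $a^2 = y_{\ell_0}^2 x_{\ell_0}^2 + y_{\ell_0} x_{\ell_0}$, and since $\widehat\alpha_k$ fixes every $x_j$ and sends $y_{\ell_0}$ to $y_{\ell_0} + k_{\ell_0}$, the correction $\widehat\alpha_k(a^2) - a^2$ is a short explicit polynomial in $y_{\ell_0}$ and $x_{\ell_0}$. Because $[a^2,a] = 0$, the commutator $[\widehat\alpha_k(a^2), a]$ equals $[\widehat\alpha_k(a^2) - a^2, a]$, and a routine Leibniz-rule computation in $A_n$ produces a basis sum with a nonzero $k_{\ell_0} x_{\ell_0}$-term.

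The sole genuine obstacle is the choice of witness in (iv). Elements lying entirely in $K[y_1,\ldots,y_n]$ or entirely in the subalgebra generated by $x_1, \ldots, x_n$ commute with their twisted squares for trivial reasons. More insidiously, natural ``mixed'' candidates such as $a = y_{\ell_0} + x_{\ell_0}$ also fail, because the correction works out to $\widehat\alpha_k(a^2) - a^2 = 2k_{\ell_0} a + k_{\ell_0}^2$, a polynomial in $a$ itself, which automatically commutes with $a$. The element $y_{\ell_0} x_{\ell_0}$ is genuinely noncommutative in a way that breaks this accidental cancellation, which is why it serves as a viable witness.
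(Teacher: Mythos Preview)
Your proof is correct and uses the same witness $a=y_{\ell}x_{\ell}$ as the paper for part~(iv); parts (i)--(iii) are handled identically via \autoref{lem:unique-weak-identity}, \autoref{lem:zero-divisors}, and the injectivity of $\widehat\alpha_k$ from \autoref{lem:alpha-properties}\ref{it:alpha1}.

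The one difference worth noting is organizational. The paper attacks (iv) by expanding $(y_\ell x_\ell * y_\ell x_\ell)*y_\ell x_\ell$ and $y_\ell x_\ell*(y_\ell x_\ell * y_\ell x_\ell)$ directly in the $*$-product and then comparing lowest-degree terms ($2k_\ell x_\ell$ versus $k_\ell x_\ell$). You instead first establish the identity
\[
(a,a,a)=\widehat\alpha_k\bigl([\widehat\alpha_k(a^2),a]\bigr)=\widehat\alpha_k\bigl([\widehat\alpha_k(a^2)-a^2,\,a]\bigr),
\]
which pushes the entire computation back into the ordinary Weyl algebra $A_n$, where commutators are governed by $\ad_{x_\ell}=\partial/\partial y_\ell$ and $\ad_{y_\ell}=-\partial/\partial x_\ell$. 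This is tidier: it explains transparently why candidates like $y_\ell+x_\ell$ fail (the correction $\widehat\alpha_k(a^2)-a^2$ lands in $K[a]$), and it isolates the nonzero $k_\ell x_\ell$ term with less bookkeeping than the paper's line-by-line expansion. Both arguments arrive at the same conclusion by the same witness; yours simply packages the computation more conceptually.
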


\begin{proof}
\ref{it:subfield}: The proof is similar to that in \cite{BR20} for the hom-associative first Weyl algebra. However, we provide it here for the convenience of the reader. $K$ embeds into $A_n$ by the ring isomorphism $\varphi\colon K\to K'\colonequals \{k'y_1^0\cdots y_n^0 x_1^0\cdots x_n^0 \mid k'\in K\} \subseteq A_n$ defined by $\varphi(k')\colonequals k'y_1^0\cdots y_n^0x_1^0\cdots x_n^0$ for any $k'\in K$. The same map embeds $K$ into $A_n^k$, i.e. it is also an isomorphism of the hom-associative ring $K$ with twisting map $\id_K$, and the hom-associative subring $K' \subseteq A_n^k$ with twisting map $\widehat\alpha_k\vert K'=\id_{K'}$.\\

\noindent\ref{it:unique-weak-identity}: By \ref{it:alpha1} in \autoref{lem:alpha-properties}, $\widehat\alpha_k$ is injective, and so the result follows from \autoref{lem:unique-weak-identity}. \\

\noindent\ref{it:no-zero-divisors}: Since $\widehat\alpha_k$ is injective and $A_n$ contains no zero divisors, the result follows from \autoref{lem:zero-divisors}.\\

\noindent\ref{it:power-assoc}: If $k=0$, then $A_n^k$ is associative, and hence also power associative. Now assume that $k\neq0$. Then there is some $\ell$ where $1\leq \ell\leq n$ such that $k_\ell\neq0$, so
\begin{align*}
 &(y_\ell x_\ell*y_\ell x_\ell)*y_\ell x_\ell=((y_\ell+k_\ell)x_\ell(y_\ell+k_\ell)x_\ell)*y_\ell x_\ell\\
 &=((y_\ell+k_\ell)(y_\ell x_\ell+k_\ell x_\ell+1_{A_n})x_\ell)*y_\ell x_\ell\\
 &=(y_\ell+2k_\ell)((y_\ell+k_\ell)x_\ell+k_\ell x_\ell+1_{A_n})x_\ell(y_\ell+k_\ell)x_\ell\\
 &=(y_\ell+2k_\ell)((y_\ell+2k_\ell)x_\ell+1_{A_n})x_\ell(y_\ell+k_\ell)x_\ell\\
 &=(y_\ell+2k_\ell)((y_\ell+2k_\ell)x_\ell+1_{A_n})(y_\ell x_\ell+k_ \ell x_\ell+1_{A_n})x_\ell\\
  &=(y_\ell+2k_\ell)((y_\ell+2k_\ell)x_\ell+1_{A_n})((y_\ell+k_\ell)x_\ell+1_{A_n})x_\ell\\
  &=2k_\ell x_\ell+\text{[terms of higher total degree]},\\
&y_\ell x_\ell*(y_\ell x_\ell*y_\ell x_\ell)=y_\ell x_\ell*((y_\ell+k_\ell)(y_\ell x_\ell+k_\ell x_\ell+1_{A_n})x_\ell)\\
&=(y_\ell+k_\ell)x_\ell(y_\ell+2k_\ell)((y_\ell+k_\ell)x_\ell+k_\ell x_\ell+1_{A_n})x_\ell\\
&=(y_\ell+k_\ell)x_\ell(y_\ell+2k_\ell)((y_\ell+2k_\ell)x_\ell+1_{A_n})x_\ell\\
&=(y_\ell+k_\ell)(y_\ell x_\ell+2k_\ell x_\ell+1_{A_n})((y_\ell+2k_\ell)x_\ell+1_{A_n})x_\ell\\
&=(y_\ell+k_\ell)((y_\ell+2k_\ell)x_\ell+1_{A_n})((y_\ell+2k_\ell)x_\ell+1_{A_n})x_\ell\\
&=k_\ell x_\ell+\text{[terms of higher total degree]}.
\end{align*}
By comparing coefficients of the two terms with lowest total degree, we see that $(y_\ell x_\ell*y_k\ell x_\ell)*y_\ell x_\ell=y_\ell x_\ell*(y_\ell x_\ell*y_\ell x_\ell)$ only if $k_\ell=0$.
\end{proof}

\begin{remark}
From \ref{it:power-assoc} in \autoref{thm:hom-Weyl-properties} we can also conclude that $A_n^k$ is left alternative, right alternative, flexible, and associative (see \autoref{subsec:non-assoc}) if and only if $k=0$.
\end{remark}

In \cite[Corollary 3.7]{BR24}, the authors showed that $A_n^k$ is simple whenever $k_1\cdots k_n\neq 0$. The next theorem generalizes this result for any $k\in K^n$.

\begin{theorem}\label{thm:simple}
$A_n^k$ is simple.
\end{theorem}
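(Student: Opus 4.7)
The plan is to reduce simplicity of the nonassociative algebra $A_n^k$ to the (classical) simplicity of the associative algebra $A_n$, exploiting that by \autoref{lem:alpha-properties}\ref{it:alpha1} the twisting map $\widehat\alpha_k$ is a $K$-algebra automorphism of $A_n$, and recalling that the multiplication on $A_n^k$ is $p*q=\widehat\alpha_k(pq)$ (see \autoref{prop:yau}).

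Let $I$ be a nonzero ideal of $A_n^k$. The first observation is that $I$ is $\widehat\alpha_k$-stable: since $1_{A_n}$ is a weak identity of $A_n^k$ by \autoref{thm:hom-Weyl-properties}\ref{it:unique-weak-identity}, for every $p\in I$ we have $\widehat\alpha_k(p)=p*1_{A_n}\in I$, hence $\widehat\alpha_k(I)\subseteq I$.

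Next I would introduce the ascending union $J\colonequals \bigcup_{i\in\N}\widehat\alpha_k^{-i}(I)$, which is an additive subgroup of $A_n$ because the chain $I\subseteq \widehat\alpha_k^{-1}(I)\subseteq \widehat\alpha_k^{-2}(I)\subseteq\cdots$ is ascending by the previous step. I claim $J$ is an \emph{associative} two-sided ideal of $A_n$: given $a\in A_n$ and $q\in J$ with $\widehat\alpha_k^i(q)\in I$, the identity
\[
\widehat\alpha_k^{i+1}(aq)=\widehat\alpha_k\!\left(\widehat\alpha_k^i(a)\cdot\widehat\alpha_k^i(q)\right)=\widehat\alpha_k^i(a)*\widehat\alpha_k^i(q)\in I
\]
(using that $I$ is a $*$-ideal and $\widehat\alpha_k^i(q)\in I$) shows $aq\in\widehat\alpha_k^{-(i+1)}(I)\subseteq J$; the argument for $qa\in J$ is symmetric. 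By Hirsch's theorem (recalled in \autoref{subsec:iterated}), $A_n$ is simple, and since $J\supseteq I\neq\{0\}$ we must have $J=A_n$. Thus $1_{A_n}\in\widehat\alpha_k^{-i}(I)$ for some $i$, and since $\widehat\alpha_k(1_{A_n})=1_{A_n}$ this gives $1_{A_n}\in I$. Finally, for every $a\in A_n$ we have $\widehat\alpha_k(a)=a*1_{A_n}\in I$, so $\widehat\alpha_k(A_n)\subseteq I$; surjectivity of $\widehat\alpha_k$ then yields $A_n\subseteq I$, i.e. $I=A_n^k$.

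The conceptual obstacle is that being a $*$-ideal only gives $\widehat\alpha_k(ap)\in I$ rather than $ap\in I$, so one cannot directly invoke simplicity of $A_n$ on $I$ itself; the whole point of forming the $\widehat\alpha_k$-saturation $J$ is to absorb all the $\widehat\alpha_k^{-i}$-corrections into a single genuine associative ideal where simplicity of $A_n$ can be applied, after which $\widehat\alpha_k$-stability of $I$ plus the fact that $\widehat\alpha_k$ fixes $1_{A_n}$ pulls the conclusion back to $I$.
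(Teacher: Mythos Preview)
Your proof is correct, and it takes a genuinely different route from the paper's.

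The paper argues directly inside $A_n^k$: starting from a nonzero element $p$ of a nonzero ideal $I$, it uses the $*$-commutators $[x_i,p]_*=(\partial/\partial y_i)\widehat\alpha_k(p)$ and $[p,y_j]_*=(\partial/\partial x_j)\widehat\alpha_k(p)$ to successively lower the degrees in $y_1,\ldots,y_n$ and then in $x_1,\ldots,x_n$, eventually producing a nonzero scalar in $I$; from there one concludes $I=A_n^k$ exactly as you do in your final step. This is a hands-on degree-reduction argument that parallels the classical proof that $A_n$ itself is simple, and it uses the specific Weyl-algebra identities $\ad_{x_i}=\partial/\partial y_i$ and $\ad_{y_j}=-\partial/\partial x_j$.

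Your argument, by contrast, is structural and never touches the Weyl relations at all: you only use that $\widehat\alpha_k$ is a \emph{unital} $K$-algebra automorphism of a simple associative algebra. Indeed, your proof establishes the general fact that if $A$ is any simple associative unital $R$-algebra and $\alpha\in\Aut_R(A)$, then the Yau twist $A^\alpha$ is simple. The key idea---forming the $\widehat\alpha_k$-saturation $J=\bigcup_{i\in\N}\widehat\alpha_k^{-i}(I)$ to convert a $*$-ideal into an honest associative ideal---is what makes this work, and the unitality of $\widehat\alpha_k$ (i.e.\ $\widehat\alpha_k(1_{A_n})=1_{A_n}$) is exactly what lets you pull the conclusion back from $J$ to $I$. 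What your approach buys is generality and conceptual clarity; what the paper's approach buys is an explicit, self-contained computation that does not depend on knowing simplicity of $A_n$ as a black box (though the paper does cite it), and which illustrates concretely how the $*$-commutators behave.
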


\begin{proof}
Let $I$ be an ideal of $A_n^k$. We claim that $I$ is an ideal of $A_n$, and since $A_n$ is simple, $I$ must be equal to either the zero ideal of $A_n$ or $A_n$ itself. As $K$-vector spaces, the zero ideal of $A_n$ is equal to the zero ideal of $A_n^k$, and $A_n$ is equal to $A_n^k$, so if we can prove the claim, we are done. To prove the claim, assume further that $I$ is nonzero and pick a nonzero $p\in I$. We may define the \emph{degrees} $\deg_{y_i}$ and $\deg_{x_j}$ of $p$ in the indeterminates $y_i$ and $x_j$ where $1\leq i,j\leq n$, respectively, as follows. We let $\deg_{y_i}(p)$ ($\deg_{x_j}(p)$) be the largest exponent of $y_i$ (of $x_j$) that appears in the expression of $p$, when written in the standard basis of $A_n^k$, viewed as a $K$-vector space (see \autoref{subsec:iterated}).\\

\noindent Step 1. Pick an $i$, $1\leq i\leq n$, with $\deg_{y_i}(p)>0$. If there is no such $i$, meaning $p$ is a nonzero polynomial in the indeterminates $x_1,\ldots,x_n$, go to Step 2. Now, if we denote by $[\cdot,\cdot]_*$ the commutator in $A_n^k$, then $[x_i,p]_*=\widehat\alpha_k\left([x_i,p]\right)=[\widehat\alpha_k(x_i),\widehat\alpha_k(p)]=[x_i,\widehat\alpha_k(p)]$. From \autoref{subsec:iterated}, $[x_i,\widehat\alpha_k(p)]=\ad_{x_i}(\widehat\alpha_k(p))=(\partial/\partial y_i)\widehat\alpha_k(p)$. Now, by the definition of $\widehat\alpha_k$, we have $\deg_{y_i}(p)=\deg_{y_i}(\widehat\alpha_k(p))$. We conclude that $[x_i,p]_*$, which is a polynomial in $I$, has degree in $y_i$ equal to $\deg_{y_i}(p)-1$. By repeating this process, we eventually get a nonzero element of $I$ which has degree 0 in $y_i$. Now, rename this element $p$ and start over from Step 1.\\

\noindent Step 2. We have a nonzero polynomial $p$ in the indeterminates $x_1,\ldots,x_n$ which is an element of $I$. Pick a $j$, $1\leq j\leq n$ with $\deg_{x_j}(p)>0$. If there is no such $j$, meaning $p$ is a nonzero element of $K$, go to Step 3. Now, $[p,y_j]_*=\widehat\alpha_k([p,y_j])=[\widehat\alpha_k(p),\alpha_k(y_j)]=[\widehat\alpha_k(p),y_j+k_j]=[\widehat\alpha_k(p),y_j]$. From \autoref{subsec:iterated}, $[\widehat\alpha_k(p),y_j]=-\ad_{y_j}(\widehat\alpha_k(p))=(\partial/\partial x_j)\widehat\alpha_k(p)$. Moreover, $\deg_{x_j}(p)=\deg_{x_j}(\alpha_k(p))$. We conclude that $[p,y_j]_*$, which is a nonzero element of $I$, has degree in $x_j$ equal to $\deg_{x_j}(p)-1$. By repeating this process, we eventually get a nonzero element of $I$ which has degree 0 in $x_j$. Now, rename this element $p$ and start over from Step 2.\\

\noindent Step 3. We have a nonzero element $p$ of $K\cap I$. Then $1_{A_n}=p^{-1}*p\in I$, and since $\widehat\alpha_k$ is surjective, any $q\in A_n^k$ may be written as $\widehat\alpha_k(q')$ for some $q'\in A_n^k$. Then $q=\widehat\alpha_k(q')=1_{A_n}*q'\in I$, so $A_n^k\subseteq I$ and $I\subseteq A_n^k$. We conclude that $I=A_n^k$.
\end{proof}

\begin{theorem}\label{thm:hom-Weyl-properties2}
 The following assertions hold:
    \begin{enumerate}[label=\upshape(\roman*)]
    \item $C(A_n^k)=K$.\label{it:commuter}
    \item $N(A_n^k)=N_l(A_n^k)=N_m(A_n^k)=N_r(A_n^k)=\begin{cases}A_n^k&\text{if } k=0, \\\{0\}&\text{otherwise}.\end{cases}$\label{it:nuclei}
    \item $Z(A_n^k)=\begin{cases}K&\text{if } k=0, \\\{0\}&\text{otherwise}.\end{cases}$\label{it:center}
    \end{enumerate}
\end{theorem}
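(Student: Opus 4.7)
The plan is to reduce all three parts to manipulations of the ordinary associative multiplication on $A_n$, exploiting injectivity of $\widehat\alpha_k$ (\autoref{lem:alpha-properties}\ref{it:alpha1}) together with the facts that $A_n$ has center $K$ and no nonzero zero divisors. Part \ref{it:commuter} is an immediate application of \autoref{lem:commuter}: injectivity yields $C(A_n^k)=C(A_n)$, and since $A_n$ is associative, $C(A_n)=Z(A_n)=K$, as recalled in \autoref{subsec:iterated}.

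For part \ref{it:nuclei}, the case $k=0$ is trivial since $A_n^0=A_n$ is associative, whence each of $N_l$, $N_m$, $N_r$, and $N$ equals the whole algebra. In the case $k\neq 0$, I first rewrite the hom-associator using that $\widehat\alpha_k$ is a ring endomorphism, obtaining
\[(a,b,c)_*=(a*b)*c-a*(b*c)=\widehat\alpha_k^2(a)\widehat\alpha_k^2(b)\widehat\alpha_k(c)-\widehat\alpha_k(a)\widehat\alpha_k^2(b)\widehat\alpha_k^2(c),\]
where the products on the right are those in $A_n$. Fix an index $\ell$ with $k_\ell\neq 0$, for which $\widehat\alpha_k^i(y_\ell)=y_\ell+ik_\ell$. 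For the middle nucleus, substituting $a=1_{A_n}$ and $c=y_\ell$ collapses the associator to $-k_\ell\widehat\alpha_k^2(b)$; together with \ref{it:no-zero-divisors} of \autoref{thm:hom-Weyl-properties} and injectivity of $\widehat\alpha_k$, this forces $b=0$. For the left and right nuclei the tactic is a two-step substitution: first take $b=c=1_{A_n}$ (respectively $a=b=1_{A_n}$) to obtain $\widehat\alpha_k^2(a)=\widehat\alpha_k(a)$ (resp.\ $\widehat\alpha_k^2(c)=\widehat\alpha_k(c)$) and hence $\widehat\alpha_k$-fixedness of any element of $N_l$ (resp.\ $N_r$) by injectivity; then take $b=1_{A_n}$, $c=y_\ell$ (resp.\ $a=y_\ell$, $b=1_{A_n}$) to reduce the associator to $-k_\ell a$ (resp.\ $k_\ell c$), which vanishes only when $a=0$ (resp.\ $c=0$).

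Part \ref{it:center} then follows directly from the definition $Z(A_n^k)=C(A_n^k)\cap N(A_n^k)$ together with \ref{it:commuter} and \ref{it:nuclei}: the intersection equals $K\cap A_n^k=K$ when $k=0$, and $K\cap\{0\}=\{0\}$ when $k\neq 0$. The only substantive step in the whole argument is deriving the associator identity and identifying the right test substitutions in \ref{it:nuclei}; once these are in hand, injectivity of $\widehat\alpha_k$ and the absence of zero divisors in $A_n$ handle everything else.
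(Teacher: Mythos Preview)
Your proof is correct and follows essentially the same route as the paper's. Both arguments handle \ref{it:commuter} via \autoref{lem:commuter}, handle \ref{it:nuclei} by testing the associator on the pairs $(1_{A_n},1_{A_n})$ and $(1_{A_n},y_\ell)$ (or $(y_\ell,1_{A_n})$) to force first $\widehat\alpha_k$-fixedness and then vanishing, and handle \ref{it:center} by intersecting; the only cosmetic differences are that you package the computation into a single associator identity before substituting, and for $N_m$ you use the test triple $(1_{A_n},b,y_\ell)$ where the paper uses $(y_\ell,p,1_{A_n})$.
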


\begin{proof}
\ref{it:commuter}: Since $\widehat\alpha_k$ is injective and $C(A_n)=K$, the result follows from \autoref{lem:commuter}. \\

\noindent\ref{it:nuclei} Let $p\in N_l(A_n^k)$. Then $(p*1_{A_n})*1_{A_n}=\widehat\alpha_k(p)*1_{A_n}=\widehat\alpha_k(\widehat\alpha_k(p))$ while $p*(1_{A_n}*1_{A_n})=p*1_{A_n}=\widehat\alpha_k(p)$. Hence $(p*1_{A_n})*1_{A_n}=p*(1_{A_n}*1_{A_n})$ if and only if $\widehat\alpha_k(p)=\widehat\alpha_k(\widehat\alpha_k(p))$. By applying $\widehat\alpha_k^{-1}$ to both sides, the equality is equivalent to $p=\widehat\alpha_k(p)$. Now, for any $\ell$ with $1\leq \ell\leq n$, $(p*1_{A_n})*y_\ell=\widehat\alpha_k(p)*y_\ell=p*y_\ell=\widehat\alpha_k(p)(y_\ell+k_\ell)=p(y_\ell+k_\ell)$ while $p*(1_{A_n}*y_\ell)=p*(y_\ell+k_\ell)=\widehat\alpha_k(p)(y_\ell+2k_\ell)=p(y_\ell+2k_\ell)$. Hence $(p*1_{A_n})*y_\ell=p*(1_{A_n}*y_\ell)$ if and only if $k_\ell p=2k_\ell p$; that is, $k_\ell p=0$. If $k_\ell=0$ for $1\leq \ell\leq n$, then $k=0$, and so $A_n^k$ is associative with $N_l(A_n^k)=A_n^k$. If $p=0$, then $N_l(A_n^k)\subseteq \{0\}$, and since $\{0\}\subseteq N_l(A_n^k)$, we have $N_l(A_n^k)=\{0\}$.

Now assume that $p\in N_m(A_n^k)$. For any $\ell$ with $1\leq \ell \leq n$, $(y_\ell*p)*1_{A_n}=((y_\ell+k_\ell)\widehat\alpha_k(p))*1_{A_n}=(y_\ell+2k_\ell)\widehat\alpha_k(\widehat\alpha_k(p))=(y_\ell+2k_\ell)\widehat\alpha_{2k}(p)$ while $y_\ell*(p*1_{A_n})=y_\ell*\widehat\alpha_k(p)=(y_\ell+k_\ell)\widehat\alpha_{2k}(p)$. Hence $(y_\ell*p)*1_{A_n}=y_\ell*(p*1_{A_n})$ if and only if $2k_\ell\widehat\alpha_{2k}(p)=k_\ell\widehat\alpha_{2k}(p)$, which holds if and only if $k_\ell=0$ or $2\widehat\alpha_{2k}(p)=\widehat\alpha_{2k}(p)$; that is, $\widehat\alpha_{2k}(p)=0$. By \ref{it:alpha1} in \autoref{lem:alpha-properties}, $\widehat\alpha_{2k}$ is injective, so the latter equality holds if and only if $p=0$. If $k_\ell=0$ for $1\leq \ell\leq n$, then $N_m(A_n^k)=A_n^k$. If $p=0$, then $N_m(A_n^k)\subseteq \{0\}$, and since $\{0\}\subseteq N_m(A_n^k)$, we have $N_m(A_n^k)=\{0\}$.

Last, assume that $p\in N_r(A_n^k)$. Then $(1_{A_n}*1_{A_n})*p=\widehat\alpha_k(1_{A_n})*p=1_{A_n}*p=\widehat\alpha_k(p)$ while $1_{A_n}*(1_{A_n}*p)=1_{A_n}*\widehat\alpha_k(p)=\widehat\alpha_k(\widehat\alpha_k(p))$. Hence $(1_{A_n}*1_{A_n})*p= 1_{A_n}*(1_{A_n}*p)$ if and only if $\widehat\alpha_k(p)=\widehat\alpha_k(\widehat\alpha_k(p))$. By applying $\widehat\alpha^{-1}_k$ to both sides, the equality is equivalent to $p=\widehat\alpha_k(p)$. Now, for any $\ell$ with $1\leq \ell\leq n$, $(y_\ell*1_{A_n})*p=(y_\ell+k_\ell)*p=(y_\ell+2k_\ell)\widehat\alpha_k(p)=(y_\ell+2k_\ell)p$ while $y_\ell*(1_{A_n}*p)=y_\ell*\widehat\alpha_k(p)=y_\ell*p=(y_\ell+k_\ell)\widehat\alpha_\ell(p)=(y_\ell+k_\ell)p$. Hence $(y_\ell*1_{A_n})*p=y_\ell*(1_{A_n}*p)$ if and only if $2k_\ell p=k_\ell p$; that is, $k_\ell p=0$. If $k_\ell=0$ for $1\leq \ell\leq n$, then $k=0$, and so $A_n^k$ is associative with $N_r(A_n^k)=A_n^k$. If $p=0$, then $N_r(A_n^k)\subseteq \{0\}$, and since $\{0\}\subseteq N_r(A_n^k)$, we have $N_r(A_n^k)=\{0\}$.

Since $N(A_n^k)=N_l(A_n^k)\cap N_m(A_n^k)\cap N_r(A_n^k)$, the result now follows.
\\

\noindent\ref{it:center}: Since $Z(A_n^k)\colonequals C(A_n^k)\cap N(A_n^k)$, the result follows from \ref{it:commuter} and \ref{it:nuclei}.
\end{proof}

\begin{lemma}\label{lem:hom-Weyl-derivations}
The following assertions hold:
\begin{enumerate}[label=\upshape(\roman*)]
	\item $\Der_K(A_n^k)=\{\delta\in\Der_K(A_n)\mid  \delta\circ\widehat\alpha_k=\widehat\alpha_k\circ\delta\}$.\label{it:hom-Weyl-derivation1}
	\item For any $\beta\in\End_K(A_n)$, $\{\delta\in\Der_K(A_n)\mid  \delta\circ\beta=\beta\circ\delta\}=\{\delta\in\Der_K(A_n)\mid \delta(\beta(x_\ell))=\beta(\delta(x_\ell)),\ \delta(\beta(y_\ell))=\beta(\delta(y_\ell)),\ 1\leq \ell\leq n\}$.\label{it:hom-Weyl-derivation2}
\end{enumerate}
\end{lemma}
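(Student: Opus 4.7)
The plan is to reduce both assertions to known structural facts: (i) to \autoref{lem:derivations} applied to $A_n$ with $\alpha=\widehat\alpha_k$, and (ii) to the elementary observation that a $K$-linear map satisfying a $(\beta,\beta)$-twisted Leibniz rule and vanishing on a generating set must vanish identically.

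For (i), \autoref{lem:derivations} tells me that, since $A_n$ is associative and unital and $\widehat\alpha_k$ is injective by \ref{it:alpha1} in \autoref{lem:alpha-properties}, it suffices to verify that every $\delta'\in\Der_K(A_n^k)$ satisfies $\delta'(1_{A_n})=0$. I will exploit that $\widehat\alpha_k$ respects the identity element, so $1_{A_n}*1_{A_n}=\widehat\alpha_k(1_{A_n})=1_{A_n}$. Writing $p\colonequals\delta'(1_{A_n})$, the derivation rule together with the weak-identity property recorded in \ref{it:unique-weak-identity} of \autoref{thm:hom-Weyl-properties} gives
\[
p=\delta'(1_{A_n}*1_{A_n})=p*1_{A_n}+1_{A_n}*p=2\widehat\alpha_k(p).
\]
Iterating this identity and invoking \ref{it:alpha2} of \autoref{lem:alpha-properties} yields $p=2^m\widehat\alpha_{mk}(p)$ for every $m\in\N$. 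If $p$ were nonzero, I would pick a monomial $y_1^{i_1}\cdots y_n^{i_n}x_1^{j_1}\cdots x_n^{j_n}$ appearing in $p$ with nonzero coefficient $c$ and of maximal total $y$-degree; expanding $\widehat\alpha_{mk}$ via the substitutions $y_\ell\mapsto y_\ell+mk_\ell$ shows that the coefficient of this same top-$y$-degree monomial in $\widehat\alpha_{mk}(p)$ is still $c$, so $p=2^m\widehat\alpha_{mk}(p)$ would force $c=2^mc$ for every $m$, a contradiction. Hence $p=0$ and \autoref{lem:derivations} delivers (i).

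For (ii), the inclusion $\subseteq$ is immediate by specialization to the generators $x_\ell$ and $y_\ell$. For $\supseteq$, I set $\gamma\colonequals\delta\circ\beta-\beta\circ\delta$ and compute
\[
\gamma(ab)=\delta(\beta(a)\beta(b))-\beta(\delta(a)b+a\delta(b))=\gamma(a)\beta(b)+\beta(a)\gamma(b),
\]
using that $\beta$ is multiplicative and $\delta$ is a derivation. Thus $\gamma$ is a $K$-linear $(\beta,\beta)$-twisted derivation on $A_n$. It vanishes on $1_{A_n}$ (since $\beta(1_{A_n})=1_{A_n}$ and $\delta(1_{A_n})=0$) and, by hypothesis, on each $x_\ell$ and $y_\ell$; hence an induction on the total degree of the standard monomial basis of $A_n$ shows $\gamma\equiv 0$. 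The main obstacle is the verification that $\delta'(1_{A_n})=0$ in part (i); this is the only nontrivial ingredient, and it rests crucially on the explicit description of $\widehat\alpha_k$ as the shift $y_\ell\mapsto y_\ell+k_\ell$ recorded in \autoref{lem:alpha-properties}.
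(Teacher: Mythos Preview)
Your proof is correct and follows essentially the same route as the paper. In part (i) both you and the paper reduce via \autoref{lem:derivations} to showing $\delta'(1_{A_n})=0$, obtain the relation $p=2\widehat\alpha_k(p)$, and conclude by a degree comparison; your iteration to $p=2^m\widehat\alpha_{mk}(p)$ is harmless but unnecessary, since already $m=1$ gives $c=2c$. In part (ii) your packaging via the $(\beta,\beta)$-twisted derivation $\gamma=\delta\circ\beta-\beta\circ\delta$ is a clean reformulation of the paper's explicit double induction on monomial degrees, but the underlying argument is the same.
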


\begin{proof}
\ref{it:hom-Weyl-derivation1}: By \autoref{lem:derivations}, it suffices to show that $\delta(1_{A_n})=0$ for any $\delta\in\Der_K(A_n^k)$. To this end, let $\delta\in\Der_K(A_n^k)$ be arbitrary. Then $\delta(1_{A_n}*1_{A_n})=\delta(1_{A_n})*1_{A_n}+1_{A_n}*\delta(1_{A_n})=2\widehat\alpha_k(\delta(1_{A_n}))=2e^{k\frac{\partial}{\partial y}}\delta(1_{A_n})$. On the other hand, $\delta(1_{A_n}*1_{A_n})=\delta(\widehat\alpha_k(1_{A_n}))=\delta(1_{A_n})$. If we let $p\colonequals \delta(1_{A_n})$, then the equality of the two above expressions is equivalent to the eigenvector problem $e^{k\frac{\partial}{\partial y}}p=\frac{1}{2}p$. We claim that it has no solution, that is,  $p=0$. This can be seen as follows. The above equation is equivalent to the PDE $p=-2\sum_{i\in\N_{>0}} \frac{1}{i!}\big(k\frac{\partial}{\partial y}\big)^ip$,
where $k\frac{\partial}{\partial y
}\colonequals \sum_{j=1}^nk_j\frac{\partial}{\partial y_j}$. Since $k_1\frac{\partial}{\partial y_1},\ldots,k_n\frac{\partial}{\partial y_n}$ are pairwise commuting locally nilpotent $K$-linear maps, from the proof of \autoref{prop:nilpotent-maps}, $\big(k\frac{\partial}{\partial y}\big)^i$ is also locally nilpotent. Hence the sum in the above PDE is in fact finite. If $p\neq0$, then the left-hand side of the PDE has higher total degree than the corresponding right-hand side, which is a contradiction. We conclude that $p=0$.\\

\noindent\ref{it:hom-Weyl-derivation2} Let $\beta\in\End_K(A_n)$. Then $\{\delta\in\Der_K(A_n)\mid \delta\circ\beta = \beta\circ\delta\}\subseteq\{\delta\in\Der_K(A_n)\mid \delta(\beta(x_\ell))=\beta(\delta(x_\ell)),\ \delta(\beta(y_\ell))=\beta(\delta(y_\ell)),\ 1\leq \ell\leq n\}$, so we only need to show that the opposite inclusion also holds. To this end, let $\delta\in \Der_K(A_n)$ and assume that $\delta(\beta(x_\ell))=\beta(\delta(x_\ell))$ and $\delta(\beta(y_\ell))=\beta(\delta(y_\ell))$ for $1\leq \ell\leq n$. By the $K$-linearity of $\beta$ and $\delta$, to show that $\delta\circ\beta = \beta\circ\delta$, it suffices to show that the equality $\delta\big(\beta\big(y_1^{i_1}\cdots y_n^{i_n}x_1^{j_1}\cdots x_n^{j_n}\big)\big)=\beta\big(\delta\big(y_1^{i_1}\cdots y_n^{i_n}x_1^{j_1}\cdots x_n^{j_n}\big)\big)$ holds for any $i_1,\ldots,i_n,$ $j_1,\ldots,j_n\in\N$. To show that the equality holds, we use induction on the total degree $I\colonequals i_1+\cdots+i_n\in\N$ of the indeterminates $y_1,\ldots,y_n$. To show the base case $I=0$, we again use induction, this time on the total degree $J\colonequals j_1+\cdots + j_n\in\N$ of the indeterminates $x_1,\ldots,x_n$. The base case $J=0$ is immediate, since $\delta(\beta(1_{A_n})=\delta(1_{A_n})=0=\beta(0)=\beta(\delta(1_{A_n}))$.
Now, note that the induction case $J+1$ implies that there is some $\ell$ for which $x_\ell$ has degree $i_\ell+1$. Since $x_1^{j_1},\ldots,x_n^{j_n}$ all commute, 
\begin{align*}
&\delta\big(\beta\big(x_1^{j_1}\cdots x_\ell^{j_{\ell+1}}\cdots x_n^{j_n}\big)\big)=\delta\big(\beta\big(x_1^{j_1}\cdots x_n^{j_n}x_\ell\big)\big)=\delta\big(\beta\big(x_1^{j_1}\cdots x_n^{j_n}\big)\beta(x_\ell)\big)\\
&=\beta\big(x_1^{j_1}\cdots x_n^{j_n}\big)\delta(\beta(x_\ell))+\delta\big(\beta\big(x_1^{j_1}\cdots x_n^{j_n}\big)\beta(x_\ell)\\
&=\beta\big(x_1^{j_1}\cdots x_n^{j_n}\big)\beta(\delta(x_\ell))+\beta\big(\delta\big(x_1^{j_1}\cdots x_n^{j_n}\big)\beta(x_\ell)\\
&=\beta\big(x_1^{j_1}\cdots x_n^{j_n}\delta(x_\ell)+\delta\big(x_1^{j_1}\cdots x_n^{j_n}\big)x_\ell\big)\\
&=\beta\big(\delta\big(x_1^{j_1}\cdots x_n^{j_n}x_\ell\big)\big)=\beta\big(\delta\big(x_1^{j_1}\cdots x_\ell^{j_{\ell+1}}\cdots x_n^{j_n}\big)\big).
\end{align*}
Hence we have shown the induction step $J+1$, and so the base case $I=0$ holds. Since $y_1^{i_1},\ldots,y_n^{i_n}$ all commute,

\begin{align*}
&\delta\big(\beta\big(y_1^{i_1}\cdots y_\ell^{\ell+1}\cdots y_n^{i_n} x_1^{j_1}\cdots x_n^{j_n}\big)\big)=\delta\big(\beta\big(y_1^{i_1}\cdots y_n^{i_n}y_\ell x_1^{j_1}\cdots x_n^{j_n}\big)\big)\\
&=\delta\big(\beta\big(y_1^{i_1}\cdots y_n^{i_n}\big)\beta\big(y_\ell x_1^{j_1}\cdots x_n^{j_n}\big)\big)\\
&=\beta\big(y_1^{i_1}\cdots y_n^{i_n}\big)\delta\big(\beta\big(y_\ell x_1^{j_1}\cdots x_n^{j_n}\big)\big)+\delta\big(\beta\big(y_1^{i_1}\cdots y_n^{i_n}\big)\big)\beta\big(y_\ell x_1^{j_1}\cdots x_n^{j_n}\big)\\
&=\beta\big(y_1^{i_1}\cdots y_n^{i_n}\big)\delta\big(\beta(y_\ell)\beta\big(x_1^{j_1}\cdots x_n^{j_n}\big)\big)+\delta\big(\beta\big(y_1^{i_1}\cdots y_n^{i_n}\big)\big)\beta\big(y_\ell x_1^{j_1}\cdots x_n^{j_n}\big)\\
&\phantom{=\ }+\delta\big(\beta\big(y_1^{i_1}\cdots y_n^{i_n}\big)\big)\beta\big(y_\ell x_1^{j_1}\cdots x_n^{j_n}\big)\\
&=\beta\big(y_1^{i_1}\cdots y_n^{i_n}\big)\big(\beta(y_\ell)\delta\big(\beta\big(x_1^{j_1}\cdots x_n^{j_n}\big)+\delta(\beta(y_\ell))\beta\big(x_1^{j_1}\cdots x_n^{j_n}\big)\big)\\
&\phantom{=\ }+\beta\big(\delta\big(y_1^{i_1}\cdots y_n^{i_n}\big)\big)\beta\big(y_\ell x_1^{j_1}\cdots x_n^{j_n}\big)	\\
&=\beta\big(y_1^{i_1}\cdots y_n^{i_n}\big)\big(\beta(y_\ell) \beta\big(\delta\big(x_1^{j_1}\cdots x_n^{j_n}\big)+\beta(\delta(y_\ell))\beta\big(x_1^{j_1}\cdots x_n^{j_n}\big)\big)\\
&=\beta\big(y_1^{i_1}\cdots y_n^{i_n}\big(y_\ell \delta\big(x_1^{j_1}\cdots x_n^{j_n}\big)+\delta(y_\ell)x_1^{j_1}\cdots x_n^{j_n}\big)+\delta\big(y_1^{i_1}\cdots y_n^{i_n}\big)y_\ell x_1^{j_1}\cdots x_n^{j_n}\big)\\
&=\beta\big(y_1^{i_1}\cdots y_n^{i_n}\delta\big(y_\ell x_1^{j_1}\cdots x_n^{j_n}\big)+\delta\big(y_1^{i_1}\cdots y_n^{i_n}\big)y_\ell x_1^{j_1}\cdots x_n^{j_n}\big)\\
&=\beta\big(\delta\big(y_1^{i_1}\cdots y_n^{i_n}y_\ell x_1^{j_1}\cdots x_n^{j_n}\big)\big)=\beta\big(\delta\big(y_1^{i_1}\cdots y_\ell^{i_\ell+1}\cdots y_n^{i_n}x_1^{j_1}\cdots x_n^{j_n}\big)\big).
\end{align*}
Hence we have shown the induction step $I+1$, which concludes the proof.
\end{proof}

We denote by $k^{-1}(0)$ the \emph{zero set} of $k$, seen as a map $\{\ell\in\N_{>0}\mid \ell\leq n\}\to K$, $\ell\mapsto k_\ell$, that is, the set $\{\ell\in\N_{>0}\mid k_\ell=0,\ \ell\leq n\}$. We then write $y_{k^{-1}(0)}$ for the $\vert k^{-1}(0)\vert$-tuple $(y_i)_{i\in k^{-1}(0)}$, but without parentheses. For example, if $k=(0,0,k_3,\ldots,k_n)$ with $k_3\cdots k_n\neq0$, then $y_{k^{-1}(0)}=y_1,y_2$. With this notation, we have the following result:

\begin{theorem}\label{thm:derivations}$\Der_K(A_n^k)=\big\{\ad_p\in\InnDer_K(A_n)\mid p=\sum_{i\not\in k^{-1}(0)}f_iy_i \\+ q(y_{k^{-1}(0)},x_1,\ldots,x_n)\in A_n,\ f_i\in K\big\}$.
\end{theorem}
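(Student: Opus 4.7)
The plan is to combine the two reductions in Lemma~\ref{lem:hom-Weyl-derivations} with the classical result (recalled in Section~\ref{subsec:iterated}) that every $K$-linear derivation of $A_n$ is inner, thereby reducing the problem to a concrete differential condition on $p$.

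By Lemma~\ref{lem:hom-Weyl-derivations}\ref{it:hom-Weyl-derivation1} together with the equality $\Der_K(A_n) = \InnDer_K(A_n)$, every $\delta \in \Der_K(A_n^k)$ has the form $\ad_p$ for some $p \in A_n$, uniquely determined modulo $Z(A_n) = K$, and satisfying $\ad_p \circ \widehat{\alpha}_k = \widehat{\alpha}_k \circ \ad_p$. I then apply Lemma~\ref{lem:hom-Weyl-derivations}\ref{it:hom-Weyl-derivation2} with $\beta = \widehat{\alpha}_k$, reducing this commutation to a check on the generators; substituting $\widehat{\alpha}_k(x_\ell) = x_\ell$ and $\widehat{\alpha}_k(y_\ell) = y_\ell + k_\ell$ collapses it to the requirement that both $\ad_p(x_\ell)$ and $\ad_p(y_\ell)$ lie in $\mathrm{Fix}(\widehat{\alpha}_k)$ for each $\ell$.

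Next I identify this fixed set. By Lemma~\ref{lem:alpha-properties}\ref{it:alpha1}, $\widehat{\alpha}_k = e^D$ with $D \colonequals k\tfrac{\partial}{\partial y}$ locally nilpotent. Factoring $e^D - \mathrm{id} = D \cdot h(D)$ where $h(D) = \sum_{i\geq 0}\tfrac{D^i}{(i+1)!}$ differs from $\mathrm{id}$ by a locally nilpotent operator, $h(D)$ is invertible on $A_n$; hence $\mathrm{Fix}(\widehat{\alpha}_k) = \ker D$. Using $\ad_{x_\ell} = \partial/\partial y_\ell$ and $\ad_{y_\ell} = -\partial/\partial x_\ell$ from Section~\ref{subsec:iterated}, the conditions above become $\partial p/\partial y_\ell, \partial p/\partial x_\ell \in \ker D$ for every $\ell$.

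The remaining step is to read off the explicit shape of $p$ from these differential conditions. Since $D$ commutes with every $\partial/\partial y_\ell$ and $\partial/\partial x_\ell$, the conditions imply $\partial_{y_\ell}(Dp) = 0$ and $\partial_{x_\ell}(Dp) = 0$ for all $\ell$, hence $Dp \in K$. Expanding $p = \sum c_{I,J}\, y^I x^J$ in the standard basis and matching coefficients in the equation $Dp = \sum_{i \notin k^{-1}(0)} k_i\,\partial p/\partial y_i$ against a constant, I would analyse the resulting system on the $c_{I,J}$ to conclude that the only monomials of $p$ involving a variable $y_i$ with $i \notin k^{-1}(0)$ are the single-variable terms $y_i$ with constant coefficients $f_i \in K$, so that the residual part of $p$ depends only on $y_{k^{-1}(0)}$ and $x_1,\ldots,x_n$. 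The converse is an immediate check: for any such $p$, $Dp = \sum_{i \notin k^{-1}(0)} f_i k_i \in K$, and moreover each $\partial p/\partial y_\ell$ and $\partial p/\partial x_\ell$ lies in $\ker D$. The coefficient-matching argument in this last step is where the real work lies, and it is the part I expect to be the main obstacle.
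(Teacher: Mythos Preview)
Your approach mirrors the paper's proof closely: reduce via Lemma~\ref{lem:hom-Weyl-derivations} to the commutation condition $\ad_p\circ\widehat\alpha_k=\widehat\alpha_k\circ\ad_p$, translate via $\ad_{x_\ell}=\partial/\partial y_\ell$ and $\ad_{y_\ell}=-\partial/\partial x_\ell$ into differential conditions, and arrive at $Dp\in K$ (the paper equivalently reaches $(e^D-\mathrm{id})p\in K$). Your identification $\mathrm{Fix}(\widehat\alpha_k)=\ker D$ through the factorisation $e^D-\mathrm{id}=D\,h(D)$ with $h(D)$ unipotent is a tidy repackaging of the paper's direct PDE manipulation, but the overall route is the same.

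The genuine gap is precisely the step you flag as the main obstacle, and it is more serious than a bookkeeping exercise: the passage from $Dp\in K$ to the stated shape of $p$ fails whenever at least two components of $k$ are nonzero, because $\ker D$ is then strictly larger than $K[y_{k^{-1}(0)},x_1,\ldots,x_n]$. Concretely, take $n=2$, $k=(1,1)$, and $p=(y_1-y_2)^2$. Then $Dp=(\partial_{y_1}+\partial_{y_2})p=0\in K$, and indeed $\widehat\alpha_k(p)=\big((y_1+1)-(y_2+1)\big)^2=p$, so $\ad_p$ commutes with $\widehat\alpha_k$ and lies in $\Der_K(A_2^k)$. Yet $\ad_p(x_1)=-2(y_1-y_2)$ is non-constant, so $\ad_p\neq\ad_{p'}$ for any $p'=f_1y_1+f_2y_2+q(x_1,x_2)$. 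Hence your coefficient-matching cannot yield the asserted form: the correct conclusion from $Dp\in K$ is only that, after subtracting a suitable $K$-linear combination $\sum_{i\notin k^{-1}(0)}f_iy_i$, the element $p$ lies in $\ker D$; and $\ker D$ is generated over $K$ not just by the $x_\ell$ and the $y_j$ with $j\in k^{-1}(0)$ but also by the linear forms $k_jy_i-k_iy_j$ for $i,j\notin k^{-1}(0)$. The paper's own one-line ``by comparing coefficients of the monomials with highest degree'' glosses over the same issue, so the discrepancy is with the statement of the theorem rather than with your strategy for proving it.
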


\begin{proof}
By \ref{it:hom-Weyl-derivation1} in \autoref{lem:hom-Weyl-derivations}, $\Der_K(A_n^k)=\{\delta\in\Der_K(A_n)\mid \delta\circ\widehat\alpha_k=\widehat\alpha_k\circ\delta\}$, and by \ref{it:hom-Weyl-derivation2} in \autoref{lem:hom-Weyl-derivations}, $\{\delta\in\Der_K(A_n)\mid  \delta\circ\widehat\alpha_k=\widehat\alpha_k\circ\delta\}=\{\delta\in\Der_K(A_n)\mid \delta(\widehat\alpha_k(x_\ell))=\widehat\alpha_k(\delta(x_\ell)),\ \delta(\alpha_k(y_\ell))=\widehat\alpha_k(\delta(y_\ell)),\ 1\leq \ell\leq n\}$. Now let $\delta\in\Der_K(A_n^k)$. From \autoref{subsec:iterated} we have $\Der_K(A_n)=\InnDer_K(A_n)$, so $\delta=\ad_p$ for some $p\in A_n$ that satisfies $\widehat\alpha_k(\ad_p(y_\ell))=\ad_p(\alpha_k(y_\ell))=\ad_p(y_\ell+k_\ell)=\ad_p(y_\ell)$ and $\widehat\alpha_k(\ad_p(x_\ell))=\ad_p(\widehat\alpha_k(x_\ell))=\ad_p(x_\ell)$. If we use $\widehat\alpha_k=e^{k\frac{\partial}{\partial y}}$ from \ref{it:alpha1} in \autoref{lem:alpha-properties}, then we have $2n$ eigenvector problems $e^{k\frac{\partial}{\partial y}}s=s$, $s\in \{\ad_p(y_\ell)\mid 1\leq \ell\leq n\}\cup \{\ad_p (x_\ell)\mid 1\leq \ell\leq n\}$. The above equation is equivalent to the PDE $\sum_{i\in\N_{>0}} \frac{1}{i!}\big(k\frac{\partial}{\partial y}\big)^is=0$ where $k\frac{\partial}{\partial y
}\colonequals \sum_{j=1}^nk_j\frac{\partial}{\partial y_j}$. From \autoref{subsec:iterated}, $\ad_p(y_\ell)=-\ad_{y_\ell}(p)=\frac{\partial}{\partial x_\ell}p$ and $\ad_p(x_\ell)=-\ad_{x_\ell}(p)=-\frac{\partial}{\partial y_\ell}p$. Hence we have $2n$ PDEs $\sum_{i\in\N_{>0}} \frac{1}{i!}\big(k\frac{\partial}{\partial y}\big)^i\frac{\partial}{\partial y_\ell}p=0$ and $\sum_{i\in\N_{>0}} \frac{1}{i!}\big(k\frac{\partial}{\partial y}\big)^i\frac{\partial}{\partial x_\ell}p=0$, $1\leq \ell\leq n$, which in turn are equivalent to the following PDEs: $\frac{\partial }{\partial y_\ell}\sum_{i\in\N_{>0}} \frac{1}{i!}\big(k\frac{\partial}{\partial y}\big)^ip=0$ and $\frac{\partial }{\partial x_\ell}\sum_{i\in\N_{>0}} \frac{1}{i!}\big(k\frac{\partial}{\partial y}\big)^ip=0$, $1\leq \ell\leq n$. From the first PDEs, $\sum_{i\in\N_{>0}} \frac{k^i}{i!}\frac{\partial^i}{\partial y^i}p\in K[x_1,\ldots,x_n]$, and so from the latter ones, $\sum_{i\in\N_{>0}} \frac{1}{i!}\big(k\frac{\partial}{\partial y}\big)^ip\in K$. By comparing coefficients of the monomials with highest degree, we see that $p=\sum_{i\not\in k^{-1}(0)}f_iy_i+q(y_{k^{-1}(0)},x_1,\ldots,x_n)\in A_n$, $f_i\in K$.
\end{proof}

\begin{lemma}\label{lem:hom-Weyl-homomorphisms}
The following assertions hold:
\begin{enumerate}[label=\upshape(\roman*)]
	\item $\Hom_K(A_n^k, A_n^{k'})=\{\varphi\in\End_K(A_n)\mid \varphi\circ \widehat\alpha_k=\widehat\alpha_{k'}\circ \varphi\}$.\label{it:hom-Weyl-homomorphisms1}
	\item For any $\beta,\beta'\in\End_K(A_n)$, $\{\varphi\in\End_K(A_n)\mid   \varphi\circ \beta=\beta'\circ \varphi\}=\{\varphi\in\End_K(A_n)\mid \varphi(\beta(x_\ell))=\beta'(\varphi(x_\ell)),\ \varphi(\beta(y_\ell))=\beta'(\varphi(y_\ell)),\ 1\leq \ell\leq n\}$.\label{it:hom-Weyl-homomorphisms2}
\end{enumerate}
\end{lemma}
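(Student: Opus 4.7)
The plan for part (i) is to apply \autoref{lem:morphisms} directly. Setting $R=K$, $A=A_n$, $\alpha=\widehat\alpha_k$, and $\alpha'=\widehat\alpha_{k'}$, that lemma gives the inclusion $\{\varphi\in\End_K(A_n)\mid \varphi\circ\widehat\alpha_k=\widehat\alpha_{k'}\circ\varphi\}\subseteq\Hom_K(A_n^k,A_n^{k'})$, with equality whenever $\widehat\alpha_{k'}$ is injective. By \ref{it:alpha1} in \autoref{lem:alpha-properties}, $\widehat\alpha_{k'}\in\Aut_K(A_n)$, so $\widehat\alpha_{k'}$ is in particular injective and the equality follows at once.

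For part (ii), the forward inclusion $\subseteq$ is immediate from the definitions, so the entire content is in the reverse inclusion. The key observation, in contrast to \ref{it:hom-Weyl-derivation2} in \autoref{lem:hom-Weyl-derivations}, is that all three maps $\varphi$, $\beta$, and $\beta'$ are $K$-algebra endomorphisms of $A_n$; hence the two compositions $\varphi\circ\beta$ and $\beta'\circ\varphi$ are themselves $K$-algebra endomorphisms of $A_n$. Since $\{x_1,\ldots,x_n,y_1,\ldots,y_n\}$ generates $A_n$ as a $K$-algebra, any two $K$-algebra endomorphisms of $A_n$ that agree on this generating set must coincide, and this is exactly the hypothesis.

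If a more explicit argument is preferred, one would mimic the structure of the proof of \ref{it:hom-Weyl-derivation2} in \autoref{lem:hom-Weyl-derivations}: by $K$-linearity, it suffices to verify the identity $\varphi(\beta(m))=\beta'(\varphi(m))$ on the standard basis monomials $m=y_1^{i_1}\cdots y_n^{i_n}x_1^{j_1}\cdots x_n^{j_n}$, which one then proves by a double induction on the total degree in the $y_\ell$'s followed by the total degree in the $x_\ell$'s. At each induction step one splits off a single generator using multiplicativity of $\beta$ and $\beta'$, applies the generator-level hypothesis, and invokes the induction hypothesis on the remaining monomial. Unlike the derivation case, no Leibniz correction terms appear, so the bookkeeping is actually lighter than in \autoref{lem:hom-Weyl-derivations}.

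No genuine obstacle is anticipated: part (i) is a one-line reduction to an already-established lemma once the invertibility of $\widehat\alpha_{k'}$ is invoked, and part (ii) is a standard ``algebra endomorphisms are determined by their values on generators'' argument that does not interact with the hom-associative structure at all.
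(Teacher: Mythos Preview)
Your proposal is correct and matches the paper's proof essentially line for line: part (i) is proved in the paper by invoking \autoref{lem:morphisms} together with the injectivity of $\widehat\alpha_{k'}$, and part (ii) is proved by exactly your ``endomorphisms agree on generators'' observation, with the paper writing out the one-line computation $\varphi(\beta(y_1^{i_1}\cdots x_n^{j_n}))=\varphi(\beta(y_1))^{i_1}\cdots\varphi(\beta(x_n))^{j_n}=\beta'(\varphi(y_1))^{i_1}\cdots\beta'(\varphi(x_n))^{j_n}=\beta'(\varphi(y_1^{i_1}\cdots x_n^{j_n}))$ rather than phrasing it abstractly. The double-induction alternative you sketch is not needed and the paper does not use it.
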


\begin{proof}
\ref{it:hom-Weyl-homomorphisms1}: Since $\widehat\alpha_{k'}$ is injective, this follows immediately from \autoref{lem:morphisms}.\\

\noindent\ref{it:hom-Weyl-homomorphisms2}: Let $\beta, \beta'\in\End_K(A_n)$. Then $\{\varphi\in\End_K(A_n)\mid \varphi\circ \beta=\beta'\circ \varphi\}
\subseteq\{\varphi\in\End_K(A_n)\mid\varphi(\beta(x_\ell))=\beta'(\varphi(x_\ell)),\ \varphi(\beta(y_\ell))=\beta'(\varphi(y_\ell)),\ 1\leq \ell\leq n\}$, so we only need to show that the opposite inclusion also holds. To this end, let $\varphi\in \End_K(A_n)$ and assume that $\varphi(\beta(x_\ell))=\beta'(\varphi(x_\ell))$ and $\varphi(\beta(y_\ell))=\beta'(\varphi(y_\ell))$ for $1\leq \ell\leq n$. By the $K$-linearity of $\beta, \beta'$, and $\varphi$, to show that $\varphi\circ \beta
=\beta'\circ \varphi$, it suffices to show that the equality $\varphi\big(\beta\big(y_1^{i_1}\cdots y_n^{i_n}x_1^{j_1}\cdots x_n^{j_n}\big)\big)=\beta'\big(\varphi\big(y_1^{i_1}\cdots y_n^{i_n}x_1^{j_1}\cdots x_n^{j_n}\big)\big)$ holds for any $i_1,\ldots,i_n,$ $j_1,\ldots,j_n\in\N$. Since $\beta,\beta'$, and $\varphi$ are $K$-algebra endomorphisms, 
\begin{align*}
&\varphi\big(\beta\big(y_1^{i_1}\cdots y_n^{i_n}x_1^{j_1}\cdots x_n^{j_n}\big)\big)=\varphi(\beta(y_1))^{i_1}\cdots \varphi(\beta(y_n))^{i_n}\varphi(\beta(x_1))^{j_1}\cdots \varphi(\beta(x_n))^{j_n}\\
&\beta'(\varphi(y_1))^{i_1}\cdots \beta'(\varphi(y_n))^{i_n}\beta'(\varphi(x_1))^{j_1}\cdots \beta'(\varphi(x_n))^{j_n}=\beta'\big(\varphi\big(y_1^{i_1}\cdots y_n^{i_n}x_1^{j_1}\cdots x_n^{j_n}\big)\big).	
\end{align*}
\end{proof}

\begin{proposition}\label{prop:Weyl-homomorphisms}Any $\varphi\in\Hom_K(A_n^k, A_n^{k'})$ is defined by 
\[
\varphi(x_\ell)=p_\ell+\sum_{i=1}^nf_{i\ell}y_i, \quad\qquad\varphi(y_\ell)=q_\ell+\sum_{i=1}^ng_{i\ell}y_i,
\]
 for some $p_\ell\colonequals p_\ell(y_{{k'}^{-1}(0)},x_1,\ldots,x_n),\ q_\ell\colonequals q_\ell(y_{{k'}^{-1}(0)},x_1,\ldots,x_n)\in A_n$, and $f_{i\ell}, g_{i\ell}\in K$, satisfying, for $1\leq j,\ell\leq n$, the following equations:
\begin{align}
&\sum_{i=1}^nf_{i\ell}k_i=0,\label{eq:coeff1}\\
&\sum_{i=1}^ng_{i\ell}k'_i=k_\ell,\label{eq:coeff2}\\
&\sum_{i=1}^n\frac{\partial}{\partial x_i}(f_{i\ell}p_j-f_{ij}p_\ell)=[p_\ell,p_j],\label{eq:PDE1}\\
&\sum_{i=1}^n\frac{\partial}{\partial x_i}(g_{i\ell}q_j-g_{ij}q_\ell)=[q_\ell,q_j],\label{eq:PDE2}\\
&\sum_{i=1}^n\frac{\partial}{\partial x_i}(g_{i\ell}p_j-f_{ij}q_\ell)=[q_\ell,p_j]+\delta_{j\ell}.\label{eq:PDE3}
\end{align}
\end{proposition}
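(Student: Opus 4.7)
The plan is to mirror the argument used for the analogous result on derivations (\autoref{thm:derivations}), translating the hom-associative homomorphism condition into conditions on generators and then using the preservation of the Weyl relations to extract the PDEs. First I would invoke \ref{it:hom-Weyl-homomorphisms1} in \autoref{lem:hom-Weyl-homomorphisms} to recast $\varphi$ as a $K$-algebra endomorphism of $A_n$ satisfying $\varphi\circ\widehat\alpha_k=\widehat\alpha_{k'}\circ\varphi$, and then \ref{it:hom-Weyl-homomorphisms2} in the same lemma to reduce this intertwining to a check on the generators only. Since $\widehat\alpha_k(x_\ell)=x_\ell$ and $\widehat\alpha_k(y_\ell)=y_\ell+k_\ell$, these conditions read $\widehat\alpha_{k'}(\varphi(x_\ell))=\varphi(x_\ell)$ and $\widehat\alpha_{k'}(\varphi(y_\ell))-\varphi(y_\ell)=k_\ell$ for $1\leq\ell\leq n$.

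Using $\widehat\alpha_{k'}=e^{k'\frac{\partial}{\partial y}}$ from \ref{it:alpha1} in \autoref{lem:alpha-properties}, these rewrite as $\sum_{i\in\N_{>0}}\frac{1}{i!}\big(k'\frac{\partial}{\partial y}\big)^i\varphi(x_\ell)=0$ and $\sum_{i\in\N_{>0}}\frac{1}{i!}\big(k'\frac{\partial}{\partial y}\big)^i\varphi(y_\ell)=k_\ell$. Following the degree analysis in the proof of \autoref{thm:derivations}---each $\big(k'\frac{\partial}{\partial y}\big)^i$ lowers the $y$-total degree by $i$, so the $i=1$ term alone carries the highest $y$-degree of the left-hand side---I would deduce that $\varphi(x_\ell)$ and $\varphi(y_\ell)$ admit decompositions $\varphi(x_\ell)=p_\ell+\sum_{i=1}^n f_{i\ell}y_i$ and $\varphi(y_\ell)=q_\ell+\sum_{i=1}^n g_{i\ell}y_i$, with $p_\ell, q_\ell\in A_n$ polynomials in $y_{{k'}^{-1}(0)}$ and $x_1,\ldots,x_n$ only, and $f_{i\ell}, g_{i\ell}\in K$. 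Substituting these back and using that $\widehat\alpha_{k'}$ fixes $p_\ell$ and $q_\ell$ while shifting $y_i\mapsto y_i+k'_i$ in the linear parts, equations \eqref{eq:coeff1} and \eqref{eq:coeff2} fall out by matching constant terms in $K$.

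The three remaining PDEs are extracted from the fact that $\varphi\in\End_K(A_n)$ preserves the defining Weyl relations $[x_i,x_j]=0=[y_i,y_j]$ and $[x_i,y_j]=\delta_{ij}$, yielding $[\varphi(x_\ell),\varphi(x_j)]=0$, $[\varphi(y_\ell),\varphi(y_j)]=0$, and $[\varphi(x_j),\varphi(y_\ell)]=\delta_{j\ell}$. Expanding each left-hand side using the decompositions just obtained together with the identities $\ad_{y_i}=-\partial/\partial x_i$ recalled in \autoref{subsec:iterated}---and noting that commutators among the $y$'s all vanish and that the scalars $f_{i\ell}, g_{i\ell}$ pull out of the brackets---one collects terms to obtain exactly \eqref{eq:PDE1}, \eqref{eq:PDE2}, and \eqref{eq:PDE3}.

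The main obstacle is the middle step: extracting the prescribed decompositions from the two intertwining conditions requires a careful $y$-homogeneous component match against an inhomogeneous right-hand side ($0$ in one case, $k_\ell\in K$ in the other), rather than a single clean degree bound. The last step is then routine bilinear bookkeeping once the decompositions are in hand.
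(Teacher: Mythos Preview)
Your proposal is correct and follows essentially the same route as the paper's proof: invoking both parts of \autoref{lem:hom-Weyl-homomorphisms} to reduce to generator conditions, rewriting via $\widehat\alpha_{k'}=e^{k'\frac{\partial}{\partial y}}$ as PDEs, solving these by the same degree argument used for \autoref{thm:derivations} to obtain the decompositions and \eqref{eq:coeff1}--\eqref{eq:coeff2}, and then expanding the preserved Weyl relations with $\ad_{y_i}=-\partial/\partial x_i$ to obtain \eqref{eq:PDE1}--\eqref{eq:PDE3}. The paper's own proof does exactly this, with the same lemma citations and the same bilinear bookkeeping at the end.
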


\begin{proof}
By \ref{it:hom-Weyl-homomorphisms1} in \autoref{lem:hom-Weyl-homomorphisms}, $\Hom_K(A_n^k, A_n^{k'})=\{\varphi\in\End_K(A_n)\mid \varphi\circ \widehat\alpha_k = \widehat\alpha_{k'}\circ \varphi \}$, and by \ref{it:hom-Weyl-homomorphisms2} in \autoref{lem:hom-Weyl-homomorphisms}, $\{\varphi\in\End_K(A_n)\mid \varphi\circ \widehat\alpha_k = \widehat\alpha_{k'}\circ \varphi \}=\{\varphi\in\End_K(A_n)\mid \varphi(\widehat\alpha_k(x_\ell))=\widehat\alpha_{k'}(\varphi(x_\ell)),\ \varphi(\alpha_k(y_\ell))=\widehat\alpha_{k'}(\varphi(y_\ell)),\ 1\leq \ell\leq n\}$. Now let $\varphi\in\Hom_K(A_n^k,A_n^{k'})$. Then $\varphi\in\End_K(A_n)$ with $\varphi(x_\ell)=\varphi(\widehat\alpha_k(x_\ell))=\widehat\alpha_{k'}(\varphi(x_\ell))=e^{k'\frac{\partial}{\partial y}}\varphi(x_\ell)$ and $\varphi(y_\ell)+k_\ell=\varphi(y_\ell+k_\ell)=\varphi(\alpha_k(y_\ell))=\widehat\alpha_{k'}(\varphi(y_\ell))=e^{k'\frac{\partial}{\partial y}}\varphi(y_\ell)$ for $1\leq \ell\leq n$. The above equations are equivalent to the following $2n$ PDEs: $\sum_{i\in\N_{0}} \frac{1}{i!}\big(k'\frac{\partial}{\partial y}\big)^i\varphi(x_\ell)=0$ and $\sum_{i\in\N_{>0}} \frac{1}{i!}\big(k'\frac{\partial}{\partial y}\big)^i\varphi(y_\ell)=k_\ell$ for $1\leq \ell\leq n$, where $k'\frac{\partial}{\partial y
}\colonequals \sum_{j=1}^nk'_j\frac{\partial}{\partial y_j}$. From the first PDEs, $\varphi(x_\ell)=p_\ell(y_{k'^{-1}(0)},x_1,\ldots,x_n)+\sum_{i=1}^nf_{i\ell}y_i$ for some $f_{i\ell}\in K$ where $\sum_{i=1}^nf_{i\ell}k_i=0$ and some $p_\ell(y_{k'^{-1}(0)},x_1,\ldots,x_n)\in A_n$. From the second PDEs, $\varphi(y_\ell)=q_\ell(y_{k'^{-1}(0)},x_1,\ldots,x_n)+\sum_{i=1}^ng_{i\ell}y_i$ for some $g_{i\ell}\in K$ where $\sum_{i=1}^ng_{i\ell}k'_i =k_\ell$ and some $q_\ell(y_{k'^{-1}(0)},x_1,\ldots,x_n)\in A_n$. Recall that $A_n$ is the free, associative, and unital $K$-algebra on the letters $x_1,\ldots,x_n,y_1,\ldots,y_n$ modulo the commutation relations $[x_j,x_\ell]=[y_j,y_\ell]=0$ and $[x_j,x_\ell]=\delta_{j\ell}$ for $1\leq j,\ell\leq n$. Hence, $\varphi\in\End_K(A_n)$ if and only if $[\varphi(x_j),\varphi(x_\ell)]=\varphi([x_j,x_\ell])=\varphi([y_j,y_\ell])=[\varphi(y_j),\varphi(y_\ell)]=\varphi(0)=0=$ and $[\varphi(x_j),\varphi(y_\ell)]=\varphi([x_j,y_\ell])=\varphi(\delta_{j\ell})=\delta_{j\ell}$ for $1\leq j,\ell\leq n$. Since the commutator is linear in both arguments and $\ad_{y_i}=-\partial/\partial x_i$ for $1\leq i\leq n$, the above relations are equivalent to the PDEs in the proposition.
\end{proof}

\begin{theorem}\label{thm:hom-Weyl-isomorphism}
$A_n^k$ and $A_n^{k'}$ are isomorphic as hom-associative $K$-algebras if and only if $k$ and $k'$ contain equally many nonzero elements. 
\end{theorem}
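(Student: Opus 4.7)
The plan is to prove the two directions separately. For the \emph{if} direction, I would construct the isomorphism explicitly: assuming $k$ and $k'$ share the same number $r$ of nonzero entries, pick a bijection $\sigma$ of $\{1,\ldots,n\}$ sending $\{i:k_i\neq 0\}$ onto $\{i:k'_i\neq 0\}$, and scalars $\lambda_\ell\in K\backslash\{0\}$ with $\lambda_\ell k'_{\sigma(\ell)}=k_\ell$ (take $\lambda_\ell\colonequals k_\ell/k'_{\sigma(\ell)}$ when $k_\ell\neq 0$, and $\lambda_\ell\colonequals 1$ otherwise). Setting $\varphi(y_\ell)\colonequals\lambda_\ell y_{\sigma(\ell)}$ and $\varphi(x_\ell)\colonequals\lambda_\ell^{-1}x_{\sigma(\ell)}$ preserves the Weyl commutation relations, since $[\varphi(x_i),\varphi(y_j)]=\lambda_i^{-1}\lambda_j\delta_{\sigma(i)\sigma(j)}=\delta_{ij}$, so $\varphi\in\Aut_K(A_n)$, and the identity $\varphi\circ\widehat\alpha_k=\widehat\alpha_{k'}\circ\varphi$ reduces on the generators to the equation $\lambda_\ell k'_{\sigma(\ell)}=k_\ell$ that defined $\lambda_\ell$. \autoref{lem:hom-Weyl-homomorphisms} then produces the desired hom-associative isomorphism $A_n^k\to A_n^{k'}$.

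For the \emph{only if} direction the plan is to identify a numerical invariant of the hom-associative algebra that computes to $r\colonequals|\{i:k_i\neq 0\}|$. Any isomorphism $\varphi\colon A_n^k\to A_n^{k'}$ yields, by conjugation $\delta\mapsto\varphi\circ\delta\circ\varphi^{-1}$, a Lie algebra isomorphism $\Der_K(A_n^k)\to\Der_K(A_n^{k'})$, so the $K$-dimension of the Lie center is an isomorphism invariant. I would then show that $\dim_K Z(\Der_K(A_n^k))=r$. By \autoref{thm:derivations}, each derivation is an inner derivation $\ad_p$ with $p$ in the admissible form $\sum_{i\not\in k^{-1}(0)}f_iy_i+q(y_{k^{-1}(0)},x_1,\ldots,x_n)$, and distinct admissible $p$'s modulo $Z(A_n)=K$ give distinct derivations. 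Since $[\ad_p,\ad_q]=\ad_{[p,q]}$, a derivation $\ad_p$ is central precisely when $[p,r]\in K$ for every admissible $r$.

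The heart of the argument will be to test this condition on well-chosen admissible $r$. Taking $r=x_\ell$ (admissible for each $\ell$) forces every $\partial p/\partial y_\ell$ into $K$, constraining $p$ to be linear in each $y_\ell$ with constant coefficient and carrying no $y$-cross or $yx$-cross terms; dually, $r=y_\ell$ forces linearity in each $x_\ell$. Thus $p$ lies in $K\oplus\bigoplus_{\ell=1}^n K y_\ell\oplus\bigoplus_{\ell=1}^n K x_\ell$. Next, $r=x_\ell^2$ is admissible for every $\ell$ and yields $[p,x_\ell^2]=-2a_\ell x_\ell$, killing the $y_\ell$-coefficient $a_\ell$ of $p$; while $r=y_\ell^2$ yields $[p,y_\ell^2]=2b_\ell y_\ell$, killing the $x_\ell$-coefficient $b_\ell$ of $p$ \emph{precisely when $y_\ell^2$ is admissible}, i.e.\ when $k_\ell=0$. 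The coefficients $b_\ell$ for $k_\ell\neq 0$ remain free, and conversely any $p=\sum_{j:k_j\neq 0}b_jx_j$ satisfies $[p,r]=\sum_{j:k_j\neq 0}b_jf_j\in K$ for every admissible $r=\sum_{i:k_i\neq 0}f_iy_i+q$, since $q$ is independent of $y_j$ when $k_j\neq 0$. This yields $\dim_K Z(\Der_K(A_n^k))=r$, and isomorphism invariance then forces $r=r'$. The main obstacle is the asymmetric admissibility of $y_\ell^2$—admissible exactly when $k_\ell=0$—which is precisely what converts the structural asymmetry of the twisting map $\widehat\alpha_k$ into a crisp numerical invariant.
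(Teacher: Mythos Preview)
Your proof is correct. The \emph{if} direction is essentially the paper's argument in compressed form: both construct an explicit automorphism of $A_n$ by permuting and rescaling the generators so as to intertwine $\widehat\alpha_k$ and $\widehat\alpha_{k'}$, then invoke \autoref{lem:hom-Weyl-homomorphisms}.

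For the \emph{only if} direction you take a genuinely different route. The paper argues by constructing an auxiliary $k''$ with $k'^{-1}(0)\subsetneq k''^{-1}(0)$ and $\lvert k''^{-1}(0)\rvert=\lvert k^{-1}(0)\rvert$, observes via \autoref{thm:derivations} that $\Der_K(A_n^{k'})\subsetneq\Der_K(A_n^{k''})$ as subsets of $\InnDer_K(A_n)$, and then appeals to the invariance of ``the set of derivations'' under isomorphism. Your approach instead extracts a crisp \emph{numerical} invariant: you compute $\dim_K Z(\Der_K(A_n^k))$ and show it equals the number of nonzero entries of $k$. This has two advantages. First, it is self-contained and does not require the auxiliary $k''$ and the transitivity manoeuvre. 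Second, it sidesteps a delicate point in the paper's formulation: a proper inclusion of derivation sets inside $\Der_K(A_n)$ does not by itself preclude an abstract Lie-algebra isomorphism between them (both are infinite-dimensional), whereas the dimension of the centre is an honest isomorphism invariant of the derivation Lie algebra. Your test elements $x_\ell$, $y_\ell$, $x_\ell^2$, and the asymmetrically admissible $y_\ell^2$ are exactly what is needed, and the verification that $p=\sum_{j:k_j\neq 0}b_jx_j$ gives $[p,r]\in K$ for every admissible $r$ (because $\partial q/\partial y_j=0$ when $k_j\neq 0$) closes the argument cleanly.
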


\begin{proof}
Suppose $k$ and $k'$ contain equally many nonzero elements, or put differently, $\vert k^{-1}(0)\vert=\vert k'^{-1}(0)\vert$. Then there are bijections $\beta\colon k^{-1}(0)\to k'^{-1}(0)$ and $\beta'\colon \{i\in\N_{>0}\mid k_i\neq 0,\ i\leq n\}\to \{i\in\N_{>0}\mid k'_i\neq 0,\ i\leq n\}$. We claim that the map $\varphi\colon A_n^k\to A_n^{k'}$ defined by  

\[\varphi(x_\ell)\colonequals \begin{cases}x_{\beta(\ell)}&\text{if } \ell\in k^{-1}(0),\\
\frac{k'_{\beta'(\ell)}}{k_\ell}x_{\beta'(\ell)}&\text{else,}\end{cases}\quad\varphi(y_\ell)\colonequals \begin{cases}y_{\beta(\ell)}&\text{if } \ell\in k^{-1}(0),\\
\frac{k_\ell}{k'_{\beta'(\ell)}}y_{\beta'(\ell)}&\text{else,}\end{cases}\]
for $1\leq \ell\leq n$ is an isomorphism of hom-associative $K$-algebras. We see that $\varphi$ is the same map as in \autoref{prop:Weyl-homomorphisms} with 
\begin{align*}
p_\ell&=\begin{cases}x_{\beta(\ell)}&\text{if } \ell\in c^{-1}(0),\\
\frac{k'_{\beta'(\ell)}}{k_\ell}x_{\beta'(\ell)}&\text{else,}\end{cases} &q_\ell&= \begin{cases}y_{\beta(\ell)}&\text{if } \ell\in k^{-1}(0),\\
0&\text{else,}\end{cases}\\
f_{i\ell}&=0, &g_{i\ell}&=\begin{cases}
0&\text{if }\ell\in k^{-1}(0)\\
\frac{k_\ell}{k'_i}\delta_{i\beta'(\ell)}&\text{else.}\end{cases}
\end{align*}
Moreover, the coefficients $f_{i\ell}$ and $g_{i\ell}$ satisfy \eqref{eq:coeff1} and \eqref{eq:coeff2} in \autoref{prop:Weyl-homomorphisms}, as
\begin{align*}
\sum_{i=1}^nf_{i\ell}k_i&=0,\\
\sum_{i=1}^ng_{i\ell}k'_i&=\sum_{i=1}^n\frac{k_\ell}{k'_i}\delta_{i\beta'(\ell)}k'_i=\sum_{i=1}^nk_\ell\delta_{i\beta'(\ell)}=k_\ell.
\end{align*}
The PDEs \eqref{eq:PDE1} and \eqref{eq:PDE2} in \autoref{prop:Weyl-homomorphisms} now both read $0=0$, while \eqref{eq:PDE3} is equal to
\[
\sum_{i=1}^n\frac{\partial}{\partial x_i}(g_{i\ell}p_j)=[q_\ell,p_j]+\delta_{j\ell}.
\]
If $\ell\in k^{-1}(0)$, then $g_{i\ell}=0$, so the above PDE is equal to $0=[q_\ell,p_j]+\delta_{j\ell}=[y_{\beta(\ell)},x_{\beta(j)}]+\delta_{j\ell}=-[x_{\beta(j)},y_{\beta(\ell)}]+\delta_{j\ell}=-\delta_{\beta(j)\beta(\ell)}+\delta_{j\ell}=0$. Here, we have used that $\delta_{\beta(j)\beta(\ell)}=\delta_{j\ell}$, which is due to the fact that $\beta$ is a bijection. If $\ell\not\in k^{-1}(0)$, then the left-hand side of the above PDE reads
\[
\sum_{i=1}^n\frac{\partial}{\partial x_i}\left(\frac{k_\ell}{k'_i}\delta_{i\beta'(\ell)}\frac{k'_{\beta'(j)}}{k_j}x_{\beta'(j)}\right)=\frac{\partial}{\partial x_{\beta'(\ell)}}\left(\frac{k_\ell}{d_{\beta'(\ell)}}\frac{k'_{\beta'(j)}}{k_j}x_{\beta'(j)}\right)=\frac{k_\ell}{k_j}\delta_{\beta'(j)\beta'(\ell)},
\]
which in turn equals $\delta_{j\ell}$, the right-hand side of the same PDE, since $\beta'$ is a bijection. From \autoref{prop:Weyl-homomorphisms}, we conclude that $\varphi\in\Hom_K(A_n^k,A_n^{k'})$. Now, let us show that $\varphi$ is a bijection. By calculations similar to those above, the map $\varphi'$ defined by 
\[\varphi'(x_\ell)\colonequals \begin{cases}x_{\beta^{-1}(\ell)}&\text{if }\ell\in k^{-1}(0),\\
\frac{k_\ell}{k'_{\beta'(\ell)}}x_{\beta'^{-1}(\ell)}&\text{else,}\end{cases}\quad\varphi'(y_\ell)\colonequals \begin{cases}y_{\beta^{-1}(\ell)}&\text{if } \ell\in k^{-1}(0),\\
\frac{k'_{\beta'(\ell)}}{k_\ell}y_{\beta'^{-1}(\ell)}&\text{else,}\end{cases}\]
defines a $K$-algebra endomorphism on $A_n$. Moreover, $\varphi\circ\varphi'=\varphi'\circ\varphi=\id_{A_n}$, so $\varphi'$ is both a left inverse and a right inverse to $\varphi$, and hence $\varphi$ is a bijection. We conclude that $\varphi\colon A_n^k\to A_n^{k'}$ is an isomorphism of hom-associative $K$-algebras.

Suppose instead that $k$ and $k'$ do not contain equally many zeros, so that $\vert k^{-1}(0)\vert\neq \vert k'^{-1}(0)\vert$. Without loss of generality, we may assume $\vert k^{-1}(0)\vert > \vert  k'^{-1}(0)\vert$ (otherwise, we may switch $k$ and $k'$). If $k'=(k'_1,\ldots,k'_n)\in K^n$, then by replacing $\vert k^{-1}(0)\vert - \vert k'^{-1}(0)\vert$ of the nonzero $k'_i$s with zeroes where $1\leq i\leq n$, we may construct  $k''=(k_1'',\dots,k_n'')\in K^n$ with $\vert k''^{-1}(0)\vert =\vert k^{-1}(0)\vert$ where if $k''_j\neq0$ for some $j$ with $1\leq j\leq n$, then $k''_j=k'_j$. In particular, $k'^{-1}(0)\subsetneq k''^{-1}(0)$. Since $\vert k''^{-1}\vert = \vert k^{-1}\vert$, by the previous proof, $A_n^{k''}$ is isomorphic as a hom-associative $K$-algebra to $A_n^k$. We claim that $A_n^{k''}$ is not isomorphic as a hom-associative $K$-algebra to $A_n^{k'}$, and since being isomorphic in this sense is a transitive relation, $A_n^k$ cannot be isomorphic as a hom-associative $K$-algebra to $A_n^{k'}$. If $\delta\in\Der_K(A_n^{k'})$, then by \autoref{thm:derivations}, $\delta=\ad_p\in\InnDer_K(A_n)$ where $p=\sum_{i\not\in k'^{-1}(0)}f_iy_i+q(y_{k'^{-1}(0)},x_1,\ldots,x_n)\in A_n$, $f_i\in K$. Since $k'^{-1}(0)\subsetneq k''^{-1}(0)$, we may write 
\begin{align*}
p&=\sum_{i\not\in k'^{-1}(0)}f_iy_i+q(y_{k'^{-1}(0)},x_1,\ldots,x_n)\\
&=\sum_{i\not\in k''^{-1}(0)}f_iy_i+\sum_{i\not\in k'^{-1}(0)\backslash k''^{-1}(0)}f_iy_i+q(y_{k'^{-1}(0)},x_1,\ldots,x_n)\\
&=\sum_{i\not\in k''^{-1}(0)}f_iy_i+q'(y_{k''^{-1}(0)},x_1,\ldots,x_n)
\end{align*}
for some $q'(y_{k''^{-1}(0)},x_1,\ldots,x_n)\in A_n$. In particular, we see that $\delta\in\Der_K(A_n^{k''})$, so $\Der_K(A_n^{k'})\subseteq\Der_K(A_n^{k''})$. Again, since $k'^{-1}(0)\subsetneq k''^{-1}(0)$, there is some $\ell$ where $k'_\ell\neq0$ while $k''_\ell=0$. By \autoref{thm:derivations}, $\ad_{y_\ell^2}\in\Der_K(A_n^{k''})\backslash\Der_K(A_n^{k'})$, so $\Der_K(A_n^{k'})\subsetneq\Der_K(A_n^{k''})$. Since the set of derivations is an isomorphism invariant of nonassociative (and hence also hom-associative) algebras, $A_n^{k''}$ is not isomorphic as a hom-associative $K$-algebra to $A_n^k$.
\end{proof}

In light of \autoref{conj:Dixmier} ($\DC_n$) and \autoref{thm:hom-Weyl-isomorphism}, we end this section by formulating the following family of conjectures:

\begin{conjecture}[The Hom--Dixmier Conjecture ($\HDC_n^m$)]\label{conj:hom-Dixmier}
If $k$ and $k'$ both contain exactly $m$ nonzero elements, then any nonzero hom-associative $K$-algebra homomorphism $A_n^k\to A_n^{k'}$ is a hom-associative $K$-algebra isomorphism.  
\end{conjecture}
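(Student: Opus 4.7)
The plan is to derive $\GHDC_n^m$ as a consequence of $\GDC_n$, using the characterization of hom-associative homomorphisms already established. First, I would invoke \autoref{lem:hom-Weyl-homomorphisms}\,(i), which identifies $\Hom_K(A_n^k, A_n^{k'})$ with the set of $K$-algebra endomorphisms $\varphi$ of $A_n$ satisfying $\varphi\circ\widehat\alpha_k=\widehat\alpha_{k'}\circ\varphi$. Since $A_n^k$, $A_n^{k'}$, and $A_n$ all share the same underlying $K$-vector space, a given $\varphi$ is nonzero as a hom-associative homomorphism precisely when it is nonzero as a $K$-algebra endomorphism of $A_n$.

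Given a nonzero $\varphi\in\Hom_K(A_n^k, A_n^{k'})$, assuming $\GDC_n$, I would conclude that $\varphi\in\Aut_K(A_n)$. Composing the compatibility $\varphi\circ\widehat\alpha_k=\widehat\alpha_{k'}\circ\varphi$ with $\varphi^{-1}$ on both sides then yields $\widehat\alpha_k\circ\varphi^{-1}=\varphi^{-1}\circ\widehat\alpha_{k'}$, so a second application of \autoref{lem:hom-Weyl-homomorphisms}\,(i) places $\varphi^{-1}$ in $\Hom_K(A_n^{k'}, A_n^k)$. Thus $\varphi$ is a hom-associative $K$-algebra isomorphism. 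This establishes $\GHDC_n^m$ conditionally on $\GDC_n$, which is precisely why the statement is phrased as a conjecture: its difficulty is inherited from the still-open status of $\GDC_n$ for $n>1$.

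The main obstacle is that no such reduction yields an \emph{unconditional} proof. One might hope to exploit the extra constraint $\varphi\circ\widehat\alpha_k=\widehat\alpha_{k'}\circ\varphi$ via \autoref{prop:Weyl-homomorphisms} to obtain bijectivity more easily than in the associative Dixmier setting, bypassing $\GDC_n$ altogether. However, in the $m=0$ case the constraint is vacuous and the conjecture is literally $\GDC_n$, while for $m\geq 1$ the structural relations \eqref{eq:coeff1}--\eqref{eq:PDE3} still encode the Weyl-algebra commutation identities up to the rescaling forced by $\widehat\alpha_k$ and $\widehat\alpha_{k'}$. I therefore expect that no unconditional bijectivity follows from the hom-associative rigidity alone, and that the natural next step is instead the \emph{reverse} implication, from $\GHDC$ for suitable parameters back to $\GDC$, yielding the stable equivalence to be formalized in \autoref{prop:conj}.
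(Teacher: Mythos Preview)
The statement is a \emph{conjecture} in the paper, not a proved theorem, so there is no paper proof to compare against. Your proposal correctly recognizes this: you give a conditional argument deriving $\GHDC_n^m$ from $\GDC_n$, and you note that the unconditional claim remains open because $\GDC_n$ is open for $n>1$.

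Your conditional argument is essentially the forward direction of the paper's \autoref{prop:conj}, and in fact you supply more detail than the paper does there. The paper simply invokes \autoref{lem:hom-Weyl-homomorphisms}\,(i) and asserts that $\GDC_n$ yields $\GHDC_n^m$; you additionally verify that the inverse $\varphi^{-1}$ of the resulting $A_n$-automorphism satisfies the intertwining relation $\widehat\alpha_k\circ\varphi^{-1}=\varphi^{-1}\circ\widehat\alpha_{k'}$, hence lies in $\Hom_K(A_n^{k'},A_n^k)$. That step is genuinely needed to conclude that $\varphi$ is a hom-associative \emph{isomorphism} rather than merely a bijective hom-associative homomorphism, so your write-up is slightly more complete on this point. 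Your closing observation that $m=0$ recovers $\GDC_n$ verbatim is exactly the reverse implication the paper uses to finish \autoref{prop:conj}.
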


For any fixed $n\in\N_{>0}$, we let the conjunction of the conjectures $\HDC_n^m$ for all $m\leq n$, the \emph{Stable Hom--Dixmier Conjecture}, be denoted by $\HDC_n^{\leq n}$.

\begin{proposition}\label{prop:conj}
$\DC_n$ and $\HDC_n^{\leq n}$ are equivalent.
\end{proposition}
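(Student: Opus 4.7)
The plan is to use Lemma \ref{lem:hom-Weyl-homomorphisms}\ref{it:hom-Weyl-homomorphisms1} as the dictionary between hom-associative and associative homomorphisms, and to exploit the observation that the case $k=0$ is genuinely degenerate: by Lemma \ref{lem:alpha-properties}\ref{it:alpha1}, $\widehat\alpha_0=e^{0\frac{\partial}{\partial y}}=\id_{A_n}$, so $A_n^0=A_n$ on the nose, with hom-associative and associative structures coinciding.

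First I would establish the direction $\GDC_n\Rightarrow\GHDC_n^{\leq n}$. Fix $m\leq n$, take $k,k'\in K^n$ each with exactly $m$ nonzero entries (so that $A_n^k\cong A_n^{k'}$ is at least possible by \autoref{thm:hom-Weyl-isomorphism}), and let $\varphi\in\Hom_K(A_n^k,A_n^{k'})$ be nonzero. By Lemma \ref{lem:hom-Weyl-homomorphisms}\ref{it:hom-Weyl-homomorphisms1}, $\varphi$ is a nonzero element of $\End_K(A_n)$ satisfying $\varphi\circ\widehat\alpha_k=\widehat\alpha_{k'}\circ\varphi$. Applying $\GDC_n$ yields $\varphi\in\Aut_K(A_n)$. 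The intertwining relation rearranges to $\widehat\alpha_k\circ\varphi^{-1}=\varphi^{-1}\circ\widehat\alpha_{k'}$, so Lemma \ref{lem:hom-Weyl-homomorphisms}\ref{it:hom-Weyl-homomorphisms1} (with the roles of $k$ and $k'$ swapped) shows that $\varphi^{-1}\in\Hom_K(A_n^{k'},A_n^k)$, and hence $\varphi$ is a hom-associative $K$-algebra isomorphism.

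For the reverse direction $\GHDC_n^{\leq n}\Rightarrow\GDC_n$, I would simply specialize to $\GHDC_n^0$. When $m=0$, both $k$ and $k'$ are forced to be the zero tuple, and $\widehat\alpha_0=\id_{A_n}$ makes the intertwining condition in Lemma \ref{lem:hom-Weyl-homomorphisms}\ref{it:hom-Weyl-homomorphisms1} vacuous, so $\Hom_K(A_n^0,A_n^0)$ coincides with $\End_K(A_n)$. A nonzero $\varphi\in\End_K(A_n)$ is thus a nonzero hom-associative homomorphism $A_n^0\to A_n^0$; by $\GHDC_n^0$ it is an isomorphism, and bijectivity together with the (trivial) intertwining of the identity twisting maps gives $\varphi\in\Aut_K(A_n)$.

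There is no real obstacle. The content is entirely in Lemma \ref{lem:hom-Weyl-homomorphisms}, which has already been proved; the only points that require care are (i) observing that being nonzero as a hom-associative homomorphism coincides with being nonzero as a $K$-linear map, since the underlying $K$-vector spaces of $A_n^k$ and $A_n$ agree, and (ii) the one-line check that an intertwining $\varphi\circ\widehat\alpha_k=\widehat\alpha_{k'}\circ\varphi$ for a bijective $\varphi$ transfers to its inverse. Everything else is unwinding definitions, and one sees in particular that only the single case $m=0$ of the stable conjecture is needed to recover $\GDC_n$, while the full strength of $\GDC_n$ suffices for every $m$.
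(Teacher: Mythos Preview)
Your proof is correct and follows essentially the same approach as the paper: use \autoref{lem:hom-Weyl-homomorphisms}\ref{it:hom-Weyl-homomorphisms1} to pass between hom-associative and associative homomorphisms for the forward implication, and specialize to $m=0$ (where $A_n^0=A_n$) for the reverse. You spell out the check that the inverse of a bijective intertwiner again intertwines, a detail the paper leaves implicit.
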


\begin{proof}
If $\DC_n$ holds, then by \ref{it:hom-Weyl-homomorphisms1} in \autoref{lem:hom-Weyl-homomorphisms}, $\HDC_n^m$ holds for any $m\leq n$. Hence $\DC_n$ implies $\HDC_n^{\leq n}$. If $\HDC_n^m$ holds for any $m\leq n$, in particular, $\HDC_n^0=\DC_n$ holds, so $\HDC_n^{\leq n}$ implies $\DC_n$.
\end{proof}

By the above proposition, $\HDC_n^{\leq n}$ and $\JC_n$ are therefore stably equivalent (see \autoref{subsec:iterated}) ($\HDC_1^1$ was proven true in \cite[Corollary 5]{BR20}). 

\begin{remark}
Recall from \autoref{thm:simple} that $A_n^k$ is simple for any $k\in K^n$, and that the kernel of any nonassociative (and hence also hom-associative) $K$-algebra homomorphism is an ideal. Hence any nonzero hom-associative $K$-algebra homomorphism from $A_n^k$ to $A_n^{k'}$ must be injective, this since its kernel must be the zero ideal. \autoref{conj:hom-Dixmier} is thus a conjecture about the surjectivity of such maps.
\end{remark}

\section{Multi-parameter formal hom-associative deformations}\label{sec:multi-param}
In \cite{MS10}, Makhlouf and Silvestrov introduced \emph{one-parameter formal hom-as\-so\-cia\-tive deformations} and \emph{one-parameter formal hom-Lie deformations} as hom-associative generalizations of the associative versions, developing the associated cohomology theory in low degrees. The theory was subsequently further developed in \cite{AEM11, HM19, She12}. In \cite[Proposition 9]{BR20}, the authors showed that the hom-associative first Weyl algebra over a field $K$ of characteristic zero is a one-parameter formal hom-associative deformation of the first Weyl algebra over $K$. Moreover, in \cite[Proposition 10]{BR20}, the authors showed that the above deformation induces a one-parameter formal hom-associative deformation of the corresponding Lie algebra into a hom-Lie algebra, when  the commutator is used as a hom-Lie bracket. 

In \cite{BR22}, the authors introduced \emph{multi-parameter formal hom-as\-so\-cia\-tive deformations} and \emph{multi-parameter formal hom-Lie deformations}. In \cite[Proposition 5.2]{BR22}, they showed that the hom-associative first Weyl algebra over a field $K$ of prime characteristic is a multi-parameter formal hom-associative deformation of the first Weyl algebra over $K$. Moreover, in \cite[Proposition 5.4]{BR22}, they showed that the above deformation induces a multi-parameter formal hom-associative deformation of the corresponding Lie algebra into a hom-Lie algebra, when the commutator is used as a hom-Lie bracket. 

The goal of this section is to prove that the higher-order hom-associative Weyl algebras over a field $K$ of characteristic zero are multi-parameter formal hom-associative deformations of the higher-order Weyl algebras over $K$, and that these deformations induce multi-parameter formal hom-associative deformations of the corresponding Lie algebras into a hom-Lie algebras, when the commutator is used as a hom-Lie bracket. Hence the higher-order hom-associative Weyl algebras over a field of characteristic zero share, a bit surprisingly, this property with the hom-associative first Weyl algebras over a field of prime characteristic.

We recall from \cite{BR22} the following definitions. Let $R$ be an associative, commutative, and unital ring and denote by $R\llbracket t_1,\ldots,t_m\rrbracket$ the formal power series ring over $R$ in the indeterminates $t_1,\ldots,t_m$. Suppose $A$ is a nonassociative $R$-algebra. We may now form a \emph{nonassociative power series ring} $A\llbracket t_1,\ldots,t_m\rrbracket$ over $A$ in the same indeterminates. The elements of this ring are formal sums $\sum_{i\in\N^m} a_it^i$ where $i\colonequals (i_1,\ldots,i_m)\in\N^m$ and $t^i\colonequals t_1^{i_1}\cdots t_m^{i_m}$, addition is pointwise, and multiplication is given by the usual multiplication of formal power series that we are used to from the associative setting. In particular, $A\llbracket t_1,\ldots,t_m\rrbracket$ is a nonassociative algebra over the associative, commutative, and unital ring $R\llbracket t_1,\ldots,t_m\rrbracket$. Now, it is not too hard to see that any $R$-bilinear map $\mu_i\colon A\times A\to A$ can be extended homogeneously (see \autoref{subsec:iterated}) to an $R\llbracket t_1,\ldots,t_m\rrbracket$-bilinear map $\widehat\mu_i\colon A\llbracket t_1,\ldots,t_m\rrbracket \times A\llbracket t_1,\ldots,t_m\rrbracket \to A\llbracket t_1,\ldots,t_m\rrbracket$. If instead of using juxtaposition, we denote by $\mu_0\colon A\times A\to A$ the multiplication in $A$, then $\mu_0$ is, in particular, an $R$-bilinear map that may be extended homogeneously to an $R\llbracket t_1,\ldots,t_m\rrbracket$-bilinear map $\widehat\mu_0\colon A\llbracket t_1,\ldots,t_m\rrbracket \times A\llbracket t_1,\ldots,t_m\rrbracket \to A\llbracket t_1,\ldots,t_m\rrbracket$. Similarly, any $R$-linear map $\alpha_i\colon A\to A$ may be extended homogeneously to an $R\llbracket t_1,\ldots,t_m\rrbracket$-linear map $\widehat\alpha_i\colon A\llbracket t_1,\ldots,t_m\rrbracket \to A\llbracket t_1,\ldots,t_m\rrbracket$. With these notations, we make the following definition:

\begin{definition}[Multi-parameter formal hom-associative deformation]\label{def:hom-assoc-deform} Let $R$ be an associative, commutative, and unital ring, and let $A$ be a hom-associative $R$-algebra. Let $i\colonequals (i_1,\ldots,i_m)\in\N^m$ and $t^i\colonequals t_1^{i_1}\cdots t_m^{i_m}$, and suppose $\mu_i\colon A\times A\to A$ are $R$-bilinear maps where $\mu_0$ is the multiplication in $A$, extended homogeneously to $R\llbracket t_1,\ldots,t_m\rrbracket$-bilinear maps $\widehat\mu_i\colon A\llbracket t_1,\ldots,t_m\rrbracket \times A\llbracket t_1,\ldots,t_m\rrbracket \to A\llbracket t_1,\ldots,t_m\rrbracket$. Suppose further that $\alpha_i\colon A\to A$ are $R$-linear maps where $\alpha_0$ is the twisting map on $A$, extended homogeneously to $R\llbracket t_1,\ldots,t_m\rrbracket$-linear maps $\widehat\alpha_i\colon A\llbracket t_1,\ldots,t_m\rrbracket \to A\llbracket t_1,\ldots,t_m\rrbracket$. A \emph{multi-}, or an \emph{$m$-parameter formal hom-associative deformation} of $A$, is a hom-associative algebra $A\llbracket t_1,\ldots,t_m\rrbracket$ over $R\llbracket t_1,\ldots,t_m\rrbracket$, with multiplication $\mu$ and twisting map $\alpha$ given by
\begin{equation*}
\mu=\sum_{i\in\N^m}\widehat\mu_i t^i,\quad\qquad \alpha=\sum_{i\in\N^m}\widehat\alpha_i t^i.
\end{equation*}
\end{definition}

\begin{theorem}\label{thm:hom-Weyl-deform}$A_n^k$, where $k\in K^n$ contains exactly $m$ nonzero elements, is an $m$-parameter formal hom-associative deformation of $A_n$.
\end{theorem}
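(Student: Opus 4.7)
The plan is to mimic the construction of $A_n^k$ from \autoref{def:hom-weyl}, but in the formal power series setting, by replacing each nonzero entry of $k$ by a formal indeterminate. After relabeling if necessary, assume that the nonzero entries of $k$ are $k_{\ell_1},\ldots,k_{\ell_m}$ with $\ell_1<\cdots<\ell_m$.

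First, I would promote \autoref{lem:iterated-homogeneous-extension} and \autoref{prop:yau-twisted-Ore-extension} to the formal power series setting. Treating $t_1,\ldots,t_m$ as central scalars, the identical arguments should show that there is a unique $K\llbracket t_1,\ldots,t_m\rrbracket$-algebra endomorphism $\alpha$ of $A_n\llbracket t_1,\ldots,t_m\rrbracket$ (equipped with the multiplication of $A_n$ extended homogeneously) satisfying $\alpha(x_\ell)=x_\ell$ for $1\leq \ell\leq n$, $\alpha(y_{\ell_j})=y_{\ell_j}+t_j$ for $1\leq j\leq m$, and $\alpha(y_\ell)=y_\ell$ otherwise. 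The Yau twist $A_n\llbracket t_1,\ldots,t_m\rrbracket^\alpha$, with multiplication $\mu(p,q)\colonequals \alpha(pq)$, is then hom-associative with twisting map $\alpha$.

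Next I would expand $\alpha$ and $\mu$ as formal power series in $t_1,\ldots,t_m$ and identify their coefficients. Mirroring the proof of \ref{it:alpha1} in \autoref{lem:alpha-properties}, I expect to obtain
\[
\alpha=e^{t_1\frac{\partial}{\partial y_{\ell_1}}+\cdots+t_m\frac{\partial}{\partial y_{\ell_m}}}=\sum_{i\in\N^m}\frac{t^i}{i!}D^i,
\]
where $D^i\colonequals (\partial/\partial y_{\ell_1})^{i_1}\cdots(\partial/\partial y_{\ell_m})^{i_m}$ and $i!\colonequals i_1!\cdots i_m!$, leaning on the pairwise commutativity of the partial derivatives together with \autoref{prop:nilpotent-maps}. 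Setting $\widehat\alpha_i\colonequals (1/i!)D^i$ and $\widehat\mu_i(p,q)\colonequals \widehat\alpha_i(pq)$ as $K$-linear and $K$-bilinear maps on $A_n$, respectively, then yields $\alpha=\sum_i\widehat\alpha_i t^i$ and $\mu=\sum_i \widehat\mu_i t^i$ with $\widehat\alpha_0=\id_{A_n}$ (the twisting map on $A_n$ viewed as a hom-associative algebra) and $\widehat\mu_0$ the multiplication of $A_n$, matching \autoref{def:hom-assoc-deform}.

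I expect the only real obstacle to be bookkeeping for the formal-power-series analogs of \autoref{lem:iterated-homogeneous-extension} and \autoref{prop:yau-twisted-Ore-extension}; the arguments should carry over essentially unchanged, provided one verifies that every coefficient of $t^i$ in $\alpha(p)$ or $\mu(p,q)$ is a finite sum, which follows from the local nilpotency of each $D^i$ on $A_n$.
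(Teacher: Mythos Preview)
Your proposal is correct and follows essentially the same route as the paper: both rely on the exponential formula $\widehat\alpha_k=e^{k\frac{\partial}{\partial y}}$ from \autoref{lem:alpha-properties}\ref{it:alpha1} together with \autoref{prop:nilpotent-maps} to exhibit the twisting map and the Yau-twisted multiplication as formal power series in the nonzero parameters, with constant terms $\id_{A_n}$ and the multiplication of $A_n$. The only cosmetic difference is that the paper invokes \autoref{thm:hom-Weyl-isomorphism} to assume the nonzero entries are $k_1,\ldots,k_m$ and then reinterprets these as indeterminates, whereas you rebuild the Yau twist directly over $K\llbracket t_1,\ldots,t_m\rrbracket$; your detour through formal-power-series analogues of \autoref{lem:iterated-homogeneous-extension} and \autoref{prop:yau-twisted-Ore-extension} is unnecessary, since \autoref{prop:yau} already applies over any commutative base ring once you have checked that your $\alpha$ is a $K\llbracket t_1,\ldots,t_m\rrbracket$-algebra endomorphism.
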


\begin{proof}
Assume that $k\in K^n$ contains exactly $m$ nonzero elements. By \autoref{thm:hom-Weyl-isomorphism}, we may, without loss of generality,  assume that these are $k_1,\ldots,k_m$, so that $k=(k_1,\ldots,k_m,0,\ldots,0)$. By \autoref{prop:nilpotent-maps} and \ref{it:alpha1} in \autoref{lem:alpha-properties}, we have $\widehat\alpha_k=e^{k\frac{\partial}{\partial y}}=e^{k_1\frac{\partial}{\partial y_1}}\cdots e^{k_m\frac{\partial}{\partial y_m}}e^{0\frac{\partial}{\partial y_{m+1}}}\cdots e^{0\frac{\partial}{\partial y_n}}=e^{k_1\frac{\partial}{\partial y_1}}\cdots e^{k_m\frac{\partial}{\partial y_m}}$, where we have used that  $e^0=\id_{A_n}$. Now we see that $\widehat\alpha_k$ is indeed a formal power series, or when acting on an element of $A_n$, even a \emph{finite} formal sum, in $k_1,\ldots,k_m$ where the coefficients are $K$-linear maps $A_n\to A_n$ (compare also with the proofs of \cite[Proposition 9]{BR20} and \cite[Proposition 5.2]{BR22}). By \ref{it:alpha3} in \autoref{lem:alpha-properties}, it now follows that the multiplication $*$ in $A_n^k$ is also a formal power series in $k_1,\ldots,k_m$ where the coefficients are $K$-linear maps $A_n\to A_n$. Since $k_1,\ldots,k_m\in K$ and by \ref{it:center} in \autoref{thm:hom-Weyl-properties2}, $Z(A_n^k)=K$, we may regard $k_1,\ldots,k_m$ as indeterminates $t_1,\ldots,t_m$ in the formal power series algebra $A_n\llbracket t_1,\ldots,t_m\rrbracket$ over $K\llbracket t_1,\ldots,t_m\rrbracket$. With the multiplication and twisting map above, this algebra is hom-associative by construction.
\end{proof}

\begin{definition}[Multi-parameter formal hom-Lie deformation]\label{def:hom-Lie-deform} Let $R$ be an associative, commutative, and unital ring, and let $L$ be a hom-Lie algebra over $R$. Let $i\colonequals (i_1,\ldots,i_m)\in\N^m$ and $t^i\colonequals t_1^{i_1}\cdots t_m^{i_m}$, and suppose $[\cdot,\cdot]_i,\colon L\times L\to L$ are $R$-bilinear maps where $[\cdot,\cdot]_0$ is the hom-Lie bracket in $L$, extended homogeneously to $R\llbracket t_1,\ldots,t_m\rrbracket$-bilinear maps $\widehat{[\cdot,\cdot]}_i\colon L\llbracket t_1,\ldots,t_m\rrbracket \times L\llbracket t_1,\ldots,t_m\rrbracket \to L\llbracket t_1,\ldots,t_m\rrbracket$. Suppose further that $\alpha_i\colon L\to L$ are $R$-linear maps where $\alpha_0$ is the twisting map on $L$, extended homogeneously to $R\llbracket t_1,\ldots,t_m\rrbracket$-linear maps $\widehat\alpha_i\colon L\llbracket t_1,\ldots,t_m\rrbracket \to L\llbracket t_1,\ldots,t_m\rrbracket$. A \emph{multi-}, or an \emph{$m$-parameter formal hom-Lie deformation} of $L$, is a hom-Lie algebra $L\llbracket t_1,\ldots,t_m\rrbracket$ over $R\llbracket t_1,\ldots,t_m\rrbracket$, with hom-Lie bracket $[\cdot,\cdot]$ and twisting map $\alpha$ given by
\begin{equation*}
[\cdot,\cdot]=\sum_{i\in\N^m}\widehat{[\cdot,\cdot]}_i t^i,\quad\qquad \alpha=\sum_{i\in\N^m}\widehat\alpha_i t^i.
\end{equation*}
\end{definition}

By using \ref{it:alpha3} in \autoref{lem:alpha-properties}, we see that the commutator in $A_n^k$ equals $e^{k\frac{\partial}{\partial y}} [\cdot,\cdot]$, where $[\cdot,\cdot]$ is the commutator in $A_n$. In particular, the commutator in $A_n^k$ is a formal power series in the nonzero elements of $k$ where the coefficients are $K$-bilinear maps $A_n\times A_n\to A_n$. By \autoref{prop:hom-Lie-commutator} and \autoref{thm:hom-Weyl-deform} (compare also with the proofs of \cite[Proposition 10]{BR20} and \cite[Proposition 5.4]{BR22}), we thus have the following result:

\begin{corollary}\label{cor:hom-Weyl-Lie-deform}The deformation of $A_n$ into $A_n^k$ induces an $m$-parameter formal hom-Lie deformation of the Lie algebra of $A_n$ into the hom-Lie algebra of $A_n^k$ when the commutator is used as a hom-Lie bracket.
\end{corollary}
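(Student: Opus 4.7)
The plan is to combine \autoref{prop:hom-Lie-commutator} with \autoref{thm:hom-Weyl-deform} and then verify that, when we pass from the hom-associative multiplication $*$ to the commutator $[\cdot,\cdot]_*$, the resulting structure still has the formal power series shape required by \autoref{def:hom-Lie-deform}. Concretely, I would first invoke \autoref{thm:hom-Weyl-deform} to say that, after we assume (without loss of generality, using \autoref{thm:hom-Weyl-isomorphism}) that the $m$ nonzero entries of $k$ are $k_1,\ldots,k_m$, the algebra $A_n^k$ is an $m$-parameter formal hom-associative deformation of $A_n$ with multiplication $\mu=\sum_{i\in\N^m}\widehat\mu_i t^i$ and twisting map $\alpha=\sum_{i\in\N^m}\widehat\alpha_i t^i$, where $t^i$ stands in for the corresponding monomial in $k_1,\ldots,k_m$. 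Then, by \autoref{prop:hom-Lie-commutator}, both $A_n$ and $A_n^k$ become hom-Lie algebras when the commutator is used as bracket, and the twisting map is unchanged; so the only thing to check is that the commutator has the required formal expansion.

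For this second (and main) step I would use the identity $p*q-q*p=\widehat\alpha_k(pq)-\widehat\alpha_k(qp)=\widehat\alpha_k([p,q])$, which is immediate from \ref{it:alpha3} in \autoref{lem:alpha-properties} together with the $K$-linearity of $\widehat\alpha_k$. Since, by \ref{it:alpha1} in \autoref{lem:alpha-properties} and \autoref{prop:nilpotent-maps}, one has $\widehat\alpha_k=e^{k_1\frac{\partial}{\partial y_1}}\cdots e^{k_m\frac{\partial}{\partial y_m}}$, the operator $\widehat\alpha_k$ expands as a formal power series in $k_1,\ldots,k_m$ whose coefficients are $K$-linear maps $A_n\to A_n$. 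Composing with the (single) $K$-bilinear map $[\cdot,\cdot]\colon A_n\times A_n\to A_n$ therefore writes $[\cdot,\cdot]_*$ as $\sum_{i\in\N^m}\widehat{[\cdot,\cdot]}_i\, t^i$, where each $[\cdot,\cdot]_i\colon A_n\times A_n\to A_n$ is $K$-bilinear, $[\cdot,\cdot]_0=[\cdot,\cdot]$ is the original commutator (giving the Lie bracket on $A_n$), and each $\widehat{[\cdot,\cdot]}_i$ is the homogeneous extension to $A_n\llbracket t_1,\ldots,t_m\rrbracket$.

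Finally, I would assemble these pieces: the twisting map already has the required shape by \autoref{thm:hom-Weyl-deform}, the bracket has the shape just derived, and the resulting nonassociative $K\llbracket t_1,\ldots,t_m\rrbracket$-algebra automatically satisfies alternativity and the hom-Jacobi identity because it coincides, as a hom-algebra, with the commutator hom-Lie algebra of the hom-associative deformation $A_n^k$, to which \autoref{prop:hom-Lie-commutator} applies. Hence we obtain exactly the data demanded by \autoref{def:hom-Lie-deform}, and the deformation of the Lie algebra of $A_n$ into the hom-Lie algebra of $A_n^k$ is an $m$-parameter formal hom-Lie deformation.

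I do not expect a real obstacle here: the nontrivial content has already been done in \autoref{thm:hom-Weyl-deform}, and \autoref{prop:hom-Lie-commutator} is purely formal. The only point to be careful about is bookkeeping—making sure that the $k_i$'s appearing inside $\widehat\alpha_k$ can be legitimately re-read as the deformation parameters $t_i$ (which is justified as in the proof of \autoref{thm:hom-Weyl-deform}, since $k_1,\ldots,k_m\in K\subseteq Z(A_n^k)$ by \ref{it:center} of \autoref{thm:hom-Weyl-properties2}), and that the coefficients in the expansion are genuinely $K$-bilinear maps on $A_n$ rather than on the deformed algebra. Both are immediate from the explicit formula $[\cdot,\cdot]_*=\widehat\alpha_k\circ[\cdot,\cdot]$.
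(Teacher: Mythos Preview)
Your proposal is correct and follows essentially the same route as the paper: the paper's argument (given in the paragraph immediately preceding the corollary) uses \ref{it:alpha3} in \autoref{lem:alpha-properties} to write the commutator in $A_n^k$ as $e^{k\frac{\partial}{\partial y}}[\cdot,\cdot]$, observes that this is a formal power series in the nonzero $k_i$ with $K$-bilinear coefficients, and then appeals to \autoref{prop:hom-Lie-commutator} and \autoref{thm:hom-Weyl-deform}. You unpack exactly these steps, only with more care about why alternativity and the hom--Jacobi identity hold and why the coefficients are $K$-bilinear on $A_n$.
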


\section*{Acknowledgments}
The author would like to thank the anonymous referee for valuable comments that helped improve the manuscript.

\end{document}